\documentclass[a4paper, 11pt]{amsart}
\usepackage[utf8]{inputenc}
\usepackage[T2A]{fontenc}
\usepackage[english]{babel}
\usepackage{amssymb,amscd}

\usepackage[all]{xy}

\usepackage[a4paper,hmargin=2.5cm,vmargin=2.5cm]{geometry}

\usepackage{dsfont}
\usepackage{todonotes}
\usepackage{mathrsfs}
\usepackage{mathtools}
\usepackage{graphicx}

\usepackage{tikz}

\usepackage{hyperref}
\hypersetup{colorlinks=true, linkcolor=blue, citecolor=black!50!green}

\usepackage{bm}

\theoremstyle{plain}
\newtheorem{theorem}{Theorem}[section]
\newtheorem{lemma}[theorem]{Lemma}
\newtheorem{proposition}[theorem]{Proposition}

\theoremstyle{remark}

\theoremstyle{definition}
\newtheorem{example}[theorem]{Example}
\newtheorem{definition}[theorem]{Definition}
\newtheorem{remark}[theorem]{Remark}

\renewcommand{\AA}{\mathbb{A}}

\newcommand{\ZZ}{\mathbb{Z}}
\newcommand{\KK}{\mathbb{K}}

\newcommand{\G}{\mathbb{G}}
\newcommand{\GG}{G_{\overline{\chi}}}
\newcommand{\TT}{\mathbb{T}}
\newcommand{\ol}[1]{\overline{#1}}
\newcommand{\Alp}[1]{\pmb{\alpha_{#1}}}
\newcommand{\Bet}[1]{\pmb{\beta_{#1}}}
\renewcommand{\Im}{\mathop{\text{Im}}}

\DeclareMathOperator{\GL}{GL}

\DeclareMathOperator{\Ker}{Ker}

\DeclareMathOperator{\Ad}{Ad}

\DeclareMathOperator{\Aut}{Aut}

\DeclareMathOperator{\Spec}{Spec}
\DeclareMathOperator{\Hom}{Hom}

\DeclareMathOperator{\cone}{cone}

\begin{document}
	
	\title[ROOT MONOIDS AND ACTIVE ALGEBRAIC GROUPS]{Root monoids and active algebraic groups}
	\author{Ekaterina Nistiuk (Presnova) and Yulia Zaitseva}
	\address{HSE University, Faculty of Computer Science, Pokrovsky Boulvard 11, Moscow, 109028 Russia}
	\email{epresnova@hse.ru, nistiuk.k@gmail.com}
	\address{HSE University, Faculty of Computer Science, Pokrovsky Boulvard 11, Moscow, 109028 Russia}
	\email{yuliazaitseva@gmail.com}

	\thanks{The article was prepared within the framework of the project ``International academic cooperation'' HSE University.}
	
	\subjclass[2020]{Primary 20M32, 14M25, \ Secondary 14R20, 14R05}

\begin{abstract}
	We describe affine monoids whose group of invertible elements is an active semidirect product of a unipotent group and a torus, in terms of comultiplications on the algebra of regular functions. We introduce the notion of a root monoid, which is constructed from a set of Demazure root pairs on an affine toric variety, and study the properties of such monoids.
\end{abstract}

\maketitle

\section{Introduction}
Let $X$ be a normal irreducible algebraic variety, and let $X \times X \rightarrow X, (x, y) \mapsto x * y$ be a given morphism. Then $X$ is called an \textit{algebraic monoid} if for all $x, y, z \in X$ one has $x * (y * z) = (x * y) * z$ and there exists a point $1 \in X$ such that $x * 1 = 1 * x = x$. In this case, $1$ is called the \textit{neutral element}. The group of invertible elements $G(X)$ of an algebraic monoid $X$ is an algebraic group and is Zariski-open in~$X$. General properties of algebraic monoids can be found in~\cite{Br-1, Br-2, Pu, Re, Ri1, Ri2, Vin}.

We will be interested in affine algebraic monoids, i.e. those monoids whose underlying variety is affine. Let the base field $\KK$ be algebraically closed and of characteristic zero. In~\cite{ABZ}, commutative monoid structures on affine space $\mathbb{A}^n$ were studied. In particular, a classification of commutative monoid structures on $\AA^3$ was obtained, and in the recent preprint~\cite{AAZ} the classification in the noncommutative case was completed. In~\cite{DZ}, a description  for commutative monoids with group of invertible elements of corank $1$ on arbitrary normal affine varieties was obtained, that is, in the case when the codimension of the maximal torus $T$ in $G(X)$ is $1$. The case of noncommutative monoids on surfaces was considered in~\cite{Bil}. The work~\cite{YZ} generalizes the results of~\cite{Bil, DZ} to the case of a noncommutative group of invertible elements of corank $1$, i.e. for $G(X) \cong \G_a \leftthreetimes T$, where $\G_a = (\KK, +)$ is the additive group of the ground field. In particular, in~\cite{YZ} it was shown that $X$ is a toric variety, and a description of all affine monoids of corank~$1$ was obtained in terms of comultiplications $\KK[X] \rightarrow \KK[X] \otimes \KK[X]$. These comultiplications are of the form
$$\chi^u \mapsto \chi^u \otimes \chi^u (1 \otimes \chi^{e_1} + \chi^{e_2} \otimes 1 )^{\langle p, u \rangle },$$
where the toric variety $X$ corresponds to the cone $\sigma$, $p$ is a primitive vector on a ray of the cone~$\sigma$, and $e_1, e_2$ are Demazure roots corresponding to $p$.

In~\cite{YZ}, the notion of an \textit{active semidirect product} $G = U \leftthreetimes T$ was introduced, where $T$ is a torus and $U$ is the unipotent radical of the group $G$. Namely, $U \leftthreetimes T$ is called \textit{active} if $\dim T + \dim \Im \psi = \dim G$, where $\psi \colon T \rightarrow \Aut U$ is the homomorphism defining the semidirect product. In particular, the unipotent radical $U$ of an active group is commutative (Lemma~\ref{thm:commute}), and hence $G \cong \G_a^k \leftthreetimes T$. In the present paper we introduce the class of root monoids, describe affine monoids with active groups of invertible elements, and study the properties of these structures.

The paper is organized as follows. In Sections~\ref{sec:tor} and~\ref{root} we recall the necessary background on toric varieties, Demazure roots, and their relation with $\G_a$-actions. Section~\ref{aff} describes the structure of the group $\GG = \G_a^k \leftthreetimes T$, including an explicit formula for the comultiplication. The group $\GG$ is determined by a set of characters $\ol{\chi} = (\chi_1, \ldots, \chi_k)$ (see Lemma~\ref{lmYZ}).

In Section~\ref{sec:afftor} we introduce the notion of a \textit{root monoid}. This is a monoid structure on affine toric varieties with group of invertible elements $\GG$. The structure depends on the cone $\sigma$ of the corresponding toric variety $X_\sigma$, on a regular face $\tau \subset \sigma$, and on a set of Demazure roots compatible with $\tau$. A set of Demazure roots $\{e_1^{(r)}, e_2^{(r)} \mid r = 1,\ldots,k\}$ is said to be \emph{compatible} with the cone $\tau$ if for the primitive vectors $p_1, \ldots, p_k$ on the one-dimensional faces of the cone $\tau$ one has $\langle p_s, e_1^{(r)} \rangle = \langle p_s, e_2^{(r)} \rangle = -\delta_{rs}$, where $\delta_{rs}$ is the Kronecker symbol, $r, s = 1, \ldots, k$. 

\smallskip

More precisely, we prove the following theorem.

\begin{theorem}\label{thm111}
	Let $X = X_\sigma$ be an affine toric variety, where the cone $\sigma$ contains a $k$-dimensional regular face $\tau \subset \sigma$, and let $\{e_1^{(r)}, e_2^{(r)} \mid r = 1,\ldots,k \}$ be a set of Demazure roots compatible with~$\tau$. Then the map $\KK[X] \rightarrow \KK[X] \otimes \KK[X]$ given by 
	\begin{equation}\label{thm:finalcom1}
		\chi^u \mapsto \chi^u \otimes \chi^u \prod_{r = 1}^{k} (1 \otimes \chi^{e_1^{(r)}}  + \chi^{e_2^{(r)}} \otimes 1)^{\langle p_r, u\rangle},
	\end{equation}
	where $\KK[X_\sigma] = \bigoplus_{u \in S_\sigma} \KK \chi^u$, defines a comultiplication that determines a monoid structure on $X$ with group of invertible elements~$\GG$, where $\chi_r = \chi^{e_2^{(r)} - e_1^{(r)}}$. 
\end{theorem}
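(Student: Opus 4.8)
The plan is to deduce the statement from the structure of the group $\GG = \G_a^k \leftthreetimes T$ recalled in Section~\ref{aff}: I will exhibit~\eqref{thm:finalcom1} as an algebra homomorphism $\Delta\colon\KK[X]\to\KK[X]\otimes\KK[X]$ whose associated morphism $\Spec\Delta\colon X\times X\to X$ restricts, on a dense open subset identified with $\GG\times\GG$, to the multiplication of $\GG$. Since $\tau$ is a $k$-dimensional regular face, I first choose a basis of the lattice $N$ dual to $M$ so that the primitive vectors $p_1,\dots,p_k$ on the rays of $\tau$ are its first $k$ members, and write $f_1,\dots,f_n$ for the dual basis of $M$ ($n=\dim X$). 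The compatibility condition then forces $e_i^{(r)}=-f_r+g_i^{(r)}$ with $g_i^{(r)}\in\Span(f_{k+1},\dots,f_n)$, so that $\chi_r=\chi^{e_2^{(r)}-e_1^{(r)}}=\chi^{g_2^{(r)}-g_1^{(r)}}$ is a character of the subtorus $T'=(\KK^\times)^{n-k}$ spanned by the last $n-k$ coordinates. Using regularity, the affine open subset $U_\tau=X_\tau\subseteq X_\sigma$ attached to the face $\tau$ is identified with $\AA^k\times T'$, and this will turn out to be $G(X)$.

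First I check that~\eqref{thm:finalcom1} defines an algebra homomorphism $\Delta$. Multiplicativity in $\chi^u$ is immediate, since $u\mapsto\langle p_r,u\rangle$ is linear and $\chi^{u+v}\otimes\chi^{u+v}=(\chi^u\otimes\chi^u)(\chi^v\otimes\chi^v)$, so the real point is that the right-hand side lies in $\KK[X]\otimes\KK[X]$. As $u\in S_\sigma$ gives $m_r:=\langle p_r,u\rangle\ge 0$, the binomial expansion of $(1\otimes\chi^{e_1^{(r)}}+\chi^{e_2^{(r)}}\otimes 1)^{m_r}$ is legitimate and, over all $r$, produces monomials $\chi^{u+\sum_r j_r e_2^{(r)}}\otimes\chi^{u+\sum_r(m_r-j_r)e_1^{(r)}}$ with $0\le j_r\le m_r$. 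Both exponents lie in $S_\sigma$: pairing $u+\sum_r j_r e_2^{(r)}$ with a ray $p_s$ of $\tau$ gives $m_s-j_s\ge 0$ by compatibility, while pairing with any other ray $\rho'$ of $\sigma$ only increases the value, because each $e_i^{(r)}$ is a Demazure root of $\sigma$ whose distinguished ray is $\rho_r$ (again by compatibility), so $\langle p_{\rho'},e_i^{(r)}\rangle\ge 0$; the argument for the second factor is symmetric. Hence $\Delta$ is an algebra homomorphism and $\mu:=\Spec\Delta\colon X\times X\to X$ is a morphism.

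Next I evaluate $\mu$ on the dense open $U_\tau\times U_\tau$ in these coordinates. For points $(x,y),(x',y')$ with $x,x'\in\AA^k$ and $y,y'\in T'$, a direct computation with~\eqref{thm:finalcom1} (the $\AA^k$-monomials $x_r^{\langle p_r,u\rangle}$ and $(x'_r)^{\langle p_r,u\rangle}$ cancel against factors pulled out of the brackets) yields $\mu\big((x,y),(x',y')\big)=\big(\,\big(x_r\chi^{g_1^{(r)}}(y')+x'_r\chi^{g_2^{(r)}}(y)\big)_{1\le r\le k},\,yy'\,\big)\in U_\tau$. This is the multiplication of the active semidirect product $\G_a^k\leftthreetimes T'$ with $T'$ acting on $\G_a^k$ through $\ol\chi=(\chi_1,\dots,\chi_k)$, i.e. the group law of $\GG$ furnished by Lemma~\ref{lmYZ}, with neutral element $1=(0,1_{T'})\in U_\tau$. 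Coassociativity of $\Delta$ and the counit identities for $1$ now follow formally: $(\Delta\otimes\id)\Delta$ and $(\id\otimes\Delta)\Delta$ are morphisms $X^3\to X$ agreeing on the dense open $\GG^3$ by associativity of the group, hence equal since $X$ is irreducible and separated, and likewise $x\mapsto\mu(1,x)$ and $x\mapsto\mu(x,1)$ agree with $\id_X$ on the dense open $\GG$. Thus $(X,\mu,1)$ is an algebraic monoid.

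Finally, the formula for $\mu$ obtained above shows $\mu(U_\tau\times U_\tau)\subseteq U_\tau$ and that $(U_\tau,\mu,1)$ is a group, so $\GG=U_\tau\subseteq G(X)$; conversely $G(X)$ is Zariski-open in the irreducible variety $X$, hence connected, and an open subgroup of a connected algebraic group is the whole group, so $G(X)=\GG$. The steps that genuinely use the hypotheses — and where I expect the main work to lie — are the Demazure-root bookkeeping that keeps $\Delta$ inside $\KK[X]\otimes\KK[X]$ and the coordinate identification of $\mu|_{\GG\times\GG}$ with the group law of $\GG$ supplied by Lemma~\ref{lmYZ}; once these are in place, the monoid axioms and the computation of $G(X)$ are formal consequences of density and of the general fact that $G(X)$ is open in $X$.
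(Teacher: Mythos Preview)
Your argument is correct and follows essentially the same blueprint as the paper: verify via the Demazure-root inequalities that each monomial in the expansion of~\eqref{thm:finalcom1} lies in $\KK[X]\otimes\KK[X]$, identify the restriction of $\mu$ to $X_\tau\times X_\tau$ with the group law of $\GG$, and deduce the monoid axioms by density. Two small points are worth flagging. First, the word ``active'' is a slip: the theorem does not assume the $\chi_r$ are linearly independent, and your argument does not use this. Second, in your coordinates $(x,y)$ the multiplication you obtain is not literally the standard semidirect-product formula $(\alpha+\ol\chi(t)\alpha',tt')$; it becomes so only after the change of variables $\alpha_r=x_r/\chi^{g_1^{(r)}}(y)$ (equivalently, the paper's $\Alp r=\chi^{-e_1^{(r)}}$). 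This is harmless, since what matters is that $(U_\tau,\mu)$ is a group isomorphic to $\GG$ with the stated characters, and your conjugation computation confirms this.

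The one place where your execution genuinely differs from the paper is the identification $G(X)=X_\tau$. The paper argues the inclusion $G(X)\subseteq X_\tau$ directly: for $u$ in the relative interior of $\tau^\perp\cap\sigma^\vee$ one has $\langle p_r,u\rangle=0$, so $\chi^u(x*y)=\chi^u(x)\chi^u(y)$, and invertibility forces $\chi^u(x)\ne 0$, i.e.\ $x\in X_\tau$. You instead observe that $\GG=U_\tau$ is an open subgroup of the connected group $G(X)$ and conclude equality. Both are clean; the paper's computation has the mild advantage of not invoking the general fact that $G(X)$ is open in $X$, while yours avoids singling out a particular character.
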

The monoids described in the theorem are called \emph{root monoids}. This construction covers a number of previously known results and includes, as special cases, the monoids obtained in~\cite{DZ,Bil,YZ}.

In Section~\ref{sec:actmon} we obtain a description of affine monoids with active group of invertible elements.
\begin{theorem}
	Every affine algebraic monoid with an active group of invertible elements is isomorphic to a root monoid for some cone~$\sigma$, its $k$-dimensional regular face~$\tau$, and a set of Demazure roots \[\{e_1^{(r)}, e_2^{(r)} \mid r = 1, \ldots, k\},\] compatible with $\tau$. Moreover, the group of invertible elements of a root monoid is active if and only if the differences $e_2^{(r)} - e_1^{(r)}$ are linearly independent.
\end{theorem}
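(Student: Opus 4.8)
The plan is to combine the explicit description of the group $\GG$ from Section~\ref{aff} with the dictionary between Demazure roots and $\G_a$-actions from Section~\ref{root}. Start with an affine monoid $X$ whose unit group is active; by Lemma~\ref{thm:commute} the unipotent radical is commutative, so $G(X) \cong \GG = \G_a^k \leftthreetimes T$ with defining characters $\chi_1,\dots,\chi_k \in \mathfrak{X}(T)$, and by Lemma~\ref{lmYZ} and the comultiplication formula of Section~\ref{aff} we may write $\KK[G(X)] = \KK[x_1,\dots,x_k] \otimes \KK[\mathfrak{X}(T)]$ with $\Delta(x_r) = x_r \otimes 1 + \chi_r \otimes x_r$ and $\Delta(\chi^\lambda) = \chi^\lambda \otimes \chi^\lambda$. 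As a variety $G(X) \cong \AA^k \times T$ is the affine toric variety attached to the smooth $k$-dimensional cone $\widehat\sigma = \cone(f_1,\dots,f_k)$ in $\RR^k \oplus (N_T)_\RR$, with big torus $\widehat\TT = \G_m^k \times T$ of dimension $\dim X$, where $\G_m^k$ acts on $\AA^k$ by coordinate scalings.

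The conceptual heart is to show that $X$ itself is toric. Since $T \subseteq G(X)$, the torus $T$ acts on $X$ by automorphisms of the underlying variety in two commuting ways: by conjugation $\mathrm{Int}\colon T\to\Aut(X)$ (which moreover preserves the monoid structure) and by left translation $L\colon T\to\Aut(X)$; they commute because $T$ is abelian, and both preserve the open subset $G(X)$. On $G(X)$ the action $\mathrm{Int}(t)$ scales $x_r$ by $\chi_r(t)^{-1}$ and fixes $\KK[\mathfrak{X}(T)]$, so $\mathrm{Int}(T)$ acts through the subtorus $\Im\psi \subseteq \G_m^k$, while $L(T)$ acts through a subtorus of $\widehat\TT$ surjecting onto the $T$-factor. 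Activeness is used here: by definition $\dim T + \dim\Im\psi = \dim G(X) = \dim T + k$, hence $\Im\psi = \G_m^k$ is the full scaling torus. Therefore $\mathrm{Int}(T)$ and $L(T)$ together generate in $\Aut(X)$ a torus $\widehat\TT$ of dimension $\dim X$ whose orbit of a generic point of $G(X)$ is the dense open subset $\widehat\TT \subseteq G(X) \subseteq X$; as $X$ is normal, it is the toric variety $X_\sigma$ of some cone $\sigma$. Since $G(X)$ is a $\widehat\TT$-stable affine open subset, $\widehat\sigma$ is a face of $\sigma$; set $\tau := \widehat\sigma$, a $k$-dimensional regular face with primitive ray generators $p_r = f_r$, and note the neutral element of $X$ is the distinguished point of $\tau$.

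It remains to recover the Demazure roots and verify the comultiplication. For each $r$, the $r$-th coordinate subgroup $\G_a \hookrightarrow \G_a^k \subseteq G(X)$ acting on $X$ by left translations restricts on $G(X)=\AA^k\times T$ to translation of the $r$-th coordinate; this $\G_a$-action is normalized by $\widehat\TT$, hence corresponds to a Demazure root $e_1^{(r)}$ of $\sigma$, and its locally nilpotent derivation being $\partial/\partial x_r$ forces $e_1^{(r)} = -m_r$, where $x_r = \chi^{m_r}$. The same subgroup acting by right translations gives a Demazure root $e_2^{(r)}$ with derivation $\chi_r\,\partial/\partial x_r$, so $e_2^{(r)} = \widehat\chi_r - m_r$ with $\chi_r = \chi^{\widehat\chi_r}$, $\widehat\chi_r \in \mathfrak{X}(T)\subseteq \widehat M$. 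From $\langle f_s, m_r\rangle = \delta_{sr}$ and $\langle f_s,\widehat\chi_r\rangle = 0$ one gets $\langle p_s, e_1^{(r)}\rangle = \langle p_s, e_2^{(r)}\rangle = -\delta_{rs}$, so the roots are compatible with $\tau$; and writing $u \in S_\sigma$ as $u = \sum_r \langle p_r,u\rangle m_r + \lambda$ and expanding $\Delta(\chi^u) = \Delta(\chi^\lambda)\prod_r\Delta(x_r)^{\langle p_r,u\rangle}$ reproduces exactly~\eqref{thm:finalcom1} with $\chi_r = \chi^{e_2^{(r)}-e_1^{(r)}}$, so $X$ is a root monoid. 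I expect the main obstacle to be this last step at the moment one invokes Section~\ref{root}: one must check that the left- and right-translation $\G_a$-actions are regular on all of the (possibly singular) variety $X$ and are honestly normalized by $\widehat\TT$, so that the Demazure dictionary applies and yields roots of the full cone $\sigma$, and one must align conventions so that $\widehat\TT$ conjugates the $r$-th root subgroup precisely by $\chi^{e_i^{(r)}}$; the toricity argument of the previous paragraph and this bookkeeping are the two load-bearing points.

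For the equivalence, let $X$ be an arbitrary root monoid. Then $U = \G_a^k$ is commutative and $\psi(t) = \diag(\chi_1(t),\dots,\chi_k(t))$ with $\chi_r = \chi^{e_2^{(r)}-e_1^{(r)}}$, so $G(X)$ is active iff $\dim\Im\psi = k$, i.e. iff $\chi_1,\dots,\chi_k$ are linearly independent in $\mathfrak{X}(T)_\QQ$. By the construction of Theorem~\ref{thm111} the maximal torus $T$ of $G(X)$ is the toric orbit $O_\tau$ of the neutral element, so $\mathfrak{X}(T) = \tau^\perp \cap M$; compatibility gives $e_2^{(r)}-e_1^{(r)} \in \tau^\perp\cap M$, and under the inclusion $\tau^\perp\cap M \hookrightarrow M$ the character $\chi_r$ of $T$ is identified with the lattice vector $e_2^{(r)}-e_1^{(r)}$. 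Hence linear independence of the $\chi_r$ in $\mathfrak{X}(T)_\QQ$ is equivalent to linear independence of the differences $e_2^{(r)}-e_1^{(r)}$ in $M_\QQ$, which is the claimed criterion.
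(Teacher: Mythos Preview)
Your proof is correct and follows essentially the same route as the paper: reduce to $G(X)\cong\GG$ via Lemma~\ref{thm:commute}, realize $X$ as toric under the torus generated by two-sided $T$-translations (the paper's $\TT=\Im\theta$, which coincides with your $\langle\mathrm{Int}(T),L(T)\rangle$), identify $\tau$ as a regular face of~$\sigma$, and read off the compatible Demazure roots from the left and right $\G_a$-translation actions, recovering~\eqref{thm:finalcom}. The only difference is that the paper imports the toricity step from Proposition~\ref{pr:toric} (i.e.\ \cite[Proposition~2]{YZ}) rather than re-deriving it as you do; your direct argument via $\dim\Im\psi=k$ is exactly what activeness buys and is the same computation underlying that proposition.
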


In Section~\ref{sec:ex} we specialize the obtained result to the case of affine space. The structure of a root monoid is determined by $2k(n-k)$ nonnegative integers. The second example describes monoid structures on the cylinder over a quadratic cone. 
In Section~\ref{sec:idemp} we consider the decomposition of the root monoid $X_\sigma$ into a union of orbits $O_\gamma$ with respect to the acting torus, and for each face~$\gamma$ of the cone~$\sigma$ we describe the set~$E_\gamma$ of all idempotents in the orbit $O_\gamma$. A nontrivial answer appears in the case of those orbits~$O_\gamma$ for which, for every ray $p_r$ not belonging to $\gamma$, exactly one of the corresponding Demazure roots $e_1^{(r)}, e_2^{(r)}$ lies in $\gamma^\perp$.
In Section~\ref{sec:idemp_geom} we show that $E_\gamma$ is a $T$-orbit, and the closure of $E_\gamma$ is a unipotent group orbit consisting of sets of the form $E_{\gamma'}$ for a collection of faces~$\{\gamma'\}$ depending on $\tau$ and on the Demazure roots~$e_1^{(r)}, e_2^{(r)}$, see Theorem~\ref{idem_thm_2}.
Finally, Section~\ref{sec:center} is devoted to the description of the center of an active monoid~$X_\sigma$. It is contained in the closure of the orbit $O_\tau$, and we find equations defining the center of the active monoid in $\ol{O_\tau}$. In the case of monoids of corank~1 studied in~\cite{YZ}, every root monoid is either commutative or active. In the case of corank greater than~1, intermediate situations are possible. 

The authors express their gratitude to Ivan Arzhantsev for helpful remarks and constant support, and thank Sergey Gorchinskiy and Konstantin Shramov for noticing that active monoids are toric varieties.

\section{Preliminaries}
\subsection{Toric varieties}\label{sec:tor}
In this section we introduce notation and briefly describe the main results on the structure of affine toric varieties. A more detailed exposition can be found in~\cite{F}.

Let $N$ be a lattice of rank $n$, and let $N_{\mathbb{Q}} = N \otimes_{\mathbb{Z}} \mathbb{Q}$ be the corresponding rational vector space of dimension $n$. Every strongly convex polyhedral cone $\sigma \subseteq N_{\mathbb{Q}}$ corresponds to an affine toric variety $X_\sigma$.

Recall that $M = \text{Hom}(N, \mathbb{Z})$ is the dual lattice, $M_{\mathbb{Q}} = M \otimes_{\mathbb{Z}} \mathbb{Q}$ is the corresponding rational $n$-dimensional vector space, and $\langle \cdot, \cdot \rangle\colon N_{\mathbb{Q}} \otimes M_{\mathbb{Q}} \rightarrow \mathbb{Q}$ is the natural pairing. We identify elements of the lattice $M$ with characters of the torus $\TT = \text{Hom}(M, \KK^\times)$. The character corresponding to $u \in M$ is denoted by $\chi^u\colon \TT \rightarrow \KK^\times$.

Given the cone $\sigma$, we construct the dual cone $$\sigma^\vee = \{u \in M_{\mathbb{Q}} \mid \langle v, u \rangle \geq 0 \text{ for all } v \in \sigma\} \subseteq M_{\mathbb{Q}}.$$

Then $S_\sigma = \sigma^\vee \cap M$ is a finitely generated semigroup, $\KK[S_\sigma] = \bigoplus_{u \in S_\sigma} \KK \chi^u$ is a finitely generated $\KK$-algebra, the toric variety $X_\sigma$ is the spectrum of this algebra, and $\TT$ is the acting torus. Moreover, points of the variety $X_\sigma$ correspond to semigroup homomorphisms $S_\sigma \rightarrow \KK$, where $\KK$ is regarded as a semigroup under multiplication. Indeed, a semigroup homomorphism $S_\sigma \rightarrow \KK$ determines an algebra homomorphism $\KK[X_\sigma] \rightarrow \KK$, which corresponds to a point of $X_\sigma$.

A cone $\sigma$ is called regular if the primitive vectors $p_1, \ldots, p_k$ on its rays form part of a basis of the lattice $N$. In this case, $X_\sigma$ is isomorphic to the direct product of an affine space of dimension~$k$ and a torus of dimension $n-k$.

It is well known that the orbits of the torus $\TT$ acting on the variety $X_\sigma$ correspond to the faces of the cone~$\sigma$. Moreover, for each face $\tau \subseteq \sigma$ there exists a distinguished point $x_\tau \in X_\sigma$ corresponding to the following semigroup homomorphism:
\begin{equation}\label{tor:fun_orb}
	u \mapsto \begin{cases} 
		1, \text{ if } u \in \tau^\perp\\
		0 \text{ otherwise.} 
	\end{cases}
\end{equation} 

The orbit of the point $x_\tau$ under the torus action is denoted $O_\tau$. In terms of semigroup homomorphisms, $O_\tau$ consists exactly of those homomorphisms $S_\sigma \rightarrow \KK$ such that 
\begin{equation}\label{tor:fun_orb2}
	u \mapsto \begin{cases} 
		\KK^\times, \text{ if } u \in \tau^\perp\\
		0 \text{ otherwise.} 
	\end{cases}
\end{equation}

It is known that in this way one can describe all torus orbits on the toric variety $X_\sigma$.

Every semigroup homomorphism $S_\sigma \to S_{\sigma'}$ defines an algebra homomorphism $\KK[S_\sigma] \to \KK[S_{\sigma'}]$ and a morphism $\Spec \KK[S_{\sigma'}] \to \Spec \KK[S_\sigma]$. In particular, if $\tau \subseteq \sigma$, then $S_\tau \supseteq S_\sigma$, which determines a morphism $X_\tau \to X_\sigma$. One can show that the morphism $X_\tau \to X_\sigma$ is an open embedding if and only if $\tau$ is a face of the cone~$\sigma$. Moreover, in this case $S_\tau = S_\sigma + \ZZ_{\ge0}(-u')$, where $u' \in M$ and $\tau = \sigma \cap u'^\perp$, or equivalently, $u' \in M$ belongs to the relative interior of the dual face $\tau^\perp \cap \sigma^\vee$ of the cone~$\sigma^\vee$. Then $\KK[S_\tau]$ is the localization of $\KK[S_\sigma]$ by $\chi^{u'}$, which defines the embedding $X_\tau \subseteq X_\sigma$ as a principal open subset. In terms of semigroup homomorphisms, $X_\tau$ consists of those points $S_\sigma \to \KK$ such that the image of $u'$ is nonzero, or equivalently, such that the image of any $u' \in M$ from the relative interior of $\tau^\perp \cap \sigma^\vee$ is nonzero. In particular, for all faces $\tau$ of the cone~$\sigma$ the torus actions on $X_\tau$ as toric varieties are compatible and have the open orbit~$X_0$.

\subsection{Derivations and Demazure roots}\label{root}

Let $A$ be an algebra. A linear operator $\delta\colon A \rightarrow A$ is called a \textit{derivation} if it satisfies the Leibniz rule:
$\delta(fg) = \delta(f) g + f \delta(g)$ for all $f,g \in A$.

A derivation $\delta$ is called \textit{locally nilpotent} (LND) if for any element $f \in A$ there exists a positive integer $n \in \ZZ_{>0}$ such that $\delta^n(f) = 0$. It is known that the exponential map establishes a bijection between the set of locally nilpotent derivations and rational $\G_a$-actions on the algebra~$A$. In particular, an LND $\delta$ corresponds to a $\G_a$-action $\varepsilon\colon \G_a \times A \rightarrow A$, $(\alpha, f) \mapsto \alpha\cdot f = \exp(\alpha \delta)(f)$, where $\alpha \in \G_a$, $f \in A$. Moreover, for an affine algebraic variety $X$ there is a bijection between $\G_a$-actions on $X$ and $\G_a$-actions on $\KK[X]$.

Now suppose that the algebra $A$ is graded, $A = \bigoplus_{u \in S} A_u$, where $S$ is a semigroup. A derivation~$\delta$ is called \textit{homogeneous} if it maps homogeneous elements to homogeneous ones. Then the \textit{degree} of a homogeneous derivation is defined as an element $\deg \delta \in \ZZ S$, namely, for any $u \in S$ one has $\delta(A_u) \subseteq A_{u + \deg \delta}$.

Let $X_\sigma$ be an affine toric variety, and let $p_1, \ldots, p_m \in N$ be the primitive vectors on the one-dimensional faces of the cone $\sigma$. For each $1 \leq i \leq m$ define the set
\begin{equation*}
	\mathfrak{R}_i  = \{ e \in M \mid \langle p_i, e \rangle = -1 , \, \langle p_j, e \rangle \geq 0 \text{ for all } j \neq i,\, 1 \leq j \leq m \}.
\end{equation*}

The elements of the set $\mathfrak{R} = \bigsqcup_{1 \leq i \leq m} \mathfrak{R}_i$ are called \textit{Demazure roots}. It is easy to check that each set $\mathfrak{R}_i \subset M$ is nonempty. It is known that Demazure roots are in one-to-one correspondence with nonzero homogeneous locally nilpotent derivations of the algebra $\KK[X_\sigma]$, up to proportionality. Moreover, a Demazure root $e$ is the degree of the corresponding homogeneous LND. In turn, homogeneous LNDs correspond to $\G_a$-actions normalized by the acting torus~$\TT$.

\section{Semidirect products}\label{aff} 

\subsection{Definition of the group $\GG$}

Let $G$ be a connected solvable affine algebraic group. Then $G = U \leftthreetimes T$, where $T$ is a torus and $U$ is the unipotent radical of $G$. Let $\psi\colon T \rightarrow \Aut U$ be the homomorphism defining the semidirect product. It is known that $U$ is isomorphic to an affine space $\AA^k$ as an affine variety.

\begin{lemma}\cite[Lemma 2]{YZ}\label{lmYZ} One can introduce coordinates on $U$ in such a way that $\psi(t) = \mathrm{diag}(\chi_1(t), \ldots, \chi_k(t)) \in \GL(U)$, where $\chi_1(t), \ldots, \chi_k(t)$ are characters of the torus $T$. 
\end{lemma}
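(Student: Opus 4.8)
The plan is to reduce the assertion to the complete reducibility of rational representations of a torus, using the exponential map in characteristic zero. Inside $G = U \leftthreetimes T$ the homomorphism $\psi$ is the conjugation action of $T$ on the normal subgroup $U$, so each $\psi(t)$ is an automorphism of the algebraic group $U$, and its differential at the identity is $\Ad(t)|_{\mathfrak{u}}$, where $\mathfrak{u} = \Lie U$. First I would recall the standard facts that, since $\KK$ has characteristic zero and $U$ is unipotent, the exponential map $\exp\colon \mathfrak{u} \to U$ is an isomorphism of affine varieties, and that it intertwines the two actions: $\exp(\Ad(t)X) = t\exp(X)t^{-1} = \psi(t)(\exp X)$ for all $t \in T$, $X \in \mathfrak{u}$. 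Hence $U$ and the vector space $\mathfrak{u}$ are isomorphic as $T$-varieties, and it is enough to diagonalise the $T$-module $\mathfrak{u}$.

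This last step is immediate: $\mathfrak{u}$ is a finite-dimensional rational $T$-module, hence a direct sum of weight spaces, so one may choose a basis $v_1, \dots, v_k$ of $\mathfrak{u}$ with $\Ad(t)v_i = \chi_i(t)v_i$ for suitable characters $\chi_1, \dots, \chi_k$ of $T$, not assumed distinct. In the associated linear coordinates $(x_1, \dots, x_k)$ on $\mathfrak{u}$ (so that $X = \sum_i x_i v_i$) the map $X \mapsto \Ad(t)X$ is $\diag(\chi_1(t), \dots, \chi_k(t))$; transporting these coordinates to $U$ along the inverse of $\exp$ gives coordinates on $U$ in which $\psi(t) = \diag(\chi_1(t), \dots, \chi_k(t))$, as claimed.

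The only points that need care are the two standard inputs about $\exp$: that it is a morphism of varieties with polynomial inverse (true in characteristic zero because the Baker--Campbell--Hausdorff series for a nilpotent Lie algebra terminates) and that it converts the adjoint action into conjugation. I do not expect a genuine obstacle here; the one thing to keep in mind is that the statement --- and the argument above --- only concerns the matrix form of $\psi(t)$ relative to coordinates on the underlying \emph{variety} of $U$, not a group isomorphism $U \cong \G_a^k$, which need not exist unless $U$ is commutative. Alternatively, one could bypass $\exp$ by using the descending central series of $U$: it is $\psi$-stable, its successive quotients are vector groups on which $T$ acts linearly, and one splits the resulting $T$-equivariant filtration; but the exponential argument is the shortest.
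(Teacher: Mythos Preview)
The paper does not give its own proof of this lemma; it is quoted verbatim from \cite[Lemma~2]{YZ} without argument. Your proof via the exponential map is correct and standard: in characteristic zero $\exp\colon \mathfrak{u}\to U$ is a $T$-equivariant isomorphism of affine varieties (intertwining $\Ad$ with conjugation), so diagonalising the rational $T$-module $\mathfrak{u}$ into weight lines and transporting the resulting linear coordinates to $U$ yields the claim. Your caveat that this produces only coordinates on the underlying \emph{variety} of $U$, not a group isomorphism $U\cong\G_a^k$, is exactly right and matches how the paper uses the lemma afterwards: commutativity of $U$ is imposed separately (and, in the active case, deduced in Lemma~\ref{thm:commute}).
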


In this section we consider the case of a commutative unipotent radical, i.e. $U \cong \G_a^k$. Under the assumptions of Lemma~\ref{lmYZ}, we denote by $\ol{\alpha} = (\alpha_1, \ldots, \alpha_k) \in U$ the corresponding coordinates, and by $\GG$ the group, where $\ol{\chi} = (\chi_1, \ldots, \chi_k)$ is the set of characters. Denote $\beta_r = \chi_r(t)^{-1}\alpha_r$ for $r = 1, \ldots, k$. Then every element of the group $\GG = \G_a^k \leftthreetimes T$ can be represented as a product of elements from $U$ and $T$ in two ways: 
\begin{equation*}
g = \ol{\alpha} \cdot t = t \cdot \ol{\beta},
\end{equation*}
where $\ol{\beta} = (\beta_1, \ldots, \beta_k)$. 

The multiplication on $\GG$ is given by
\begin{equation}\label{def:mult}
	(\ol{\alpha}\cdot t) \cdot (\ol{\alpha'}\cdot t') = (\ol{\alpha} + \ol{\chi}(t)\ol{\alpha'}) \cdot tt',
\end{equation}
\begin{equation}
	(t \cdot \ol{\beta}) \cdot (t' \cdot \ol{\beta'}) = tt' \cdot (\ol{\beta} + \ol{\chi}^{-1}(t)\ol{\beta'}).
\end{equation}
Here the set of values of the $k$ characters is multiplied coordinatewise by an element of $\G_a^k$. We denote by $\Alp{r}, \Bet{r} \in \KK[\GG]$ the coordinate functions, where $\Alp{r}(g) = \alpha_r$ and $\Bet{r}(g) = \beta_r$.

\subsection{Cones and homogeneity of coordinate spaces}\label{sec:conhom}
Let $n = \dim \GG$. As an algebraic variety, $\GG$ is the direct product of the affine space~$\AA^k$ and the torus~$T$ of dimension $n-k$. That is, $\GG$ is isomorphic to an affine toric variety with a regular $k$-dimensional cone $\tau$ in the $n$-dimensional lattice. Recall that the algebra of regular functions is expressed as
\begin{equation}\label{1grad}
	\KK[\GG] = \bigoplus_{u \in S_\tau} \KK \chi^u.  
\end{equation}
Consider the map $\theta\colon T \times T \rightarrow \Aut(\GG)$ defined as follows: an element $(t_1, t_2) \in T \times T$ acts on an element $g \in \GG$ by $t_1 g t_2^{-1}$. Specifically, for an element $\ol{\alpha}\cdot t$ we have
\begin{equation}\label{1eq:act}
	\theta(t_1, t_2)(\ol{\alpha}\cdot t) = t_1\cdot \ol{\alpha}\cdot t\cdot t_2^{-1} = \ol{\chi}(t_1)\ol{\alpha}\cdot t_1 t t_2^{-1}.
\end{equation}
Denote $\TT = \Im \theta$. Note that $\Ker \theta = \text{diag} (\cap \Ker \chi_j) \subset T \times T$, hence the value of the character $\chi_r(t_1)$ does not depend on the choice of representative $\theta(t_1, t_2) = \theta(t_1', t_2') \in \TT$, so the map $\theta(t_1, t_2) \mapsto \chi_r(t_1)$ defines a character of the torus~$\TT$. Consequently, all coordinate functions $\Alp{r}$ are homogeneous with respect to the grading~\eqref{1grad}, and thus, multiplying by a scalar, we can asssume $\Alp{r} = \chi^{a_r}$, where $a_r \in \tau^\vee$.
Similarly, we denote $\Bet{r} = \chi^{b_r}$, where $b_r \in \tau^\vee$. Moreover, since $\Bet{r} = \chi_r^{-1} \Alp{r}$, all characters $\chi_r^{-1}$ of the torus~$T$, which also correspond to characters of the torus $\TT$, can be recovered from $a_r, b_r \in M$, namely, $\chi_r = \chi^{a_r - b_r}$.

\subsection{Torus orbits}\label{1sec:orb}
Note that the set \[O_r = \{ \ol{\alpha}\cdot t \in \GG \mid \alpha_r = 0, \; \alpha_j \neq 0 \text{ for } j \neq r \}\]
is an $(n-1)$-dimensional orbit of the torus $\TT$ under the action~\eqref{1eq:act}. The codimension of such an orbit is $1$, hence $O_r$ corresponds to a one-dimensional face $\rho_r$ of the cone~$\tau$, i.e. a ray. Let $p_r$ be the primitive vector on the corresponding ray, and denote the orbit by $O_{p_r}$. Since the cone $\tau$ is regular, all primitive vectors $p_r$ form part of a basis of the lattice $N$.

We have already shown that the coordinate functions $\Alp{r}$ correspond to characters $\chi^{a_r}$ of the torus $\TT$. Since the coordinate function $\Alp{r}$ vanishes at all points of the orbit $O_{p_r}$, by the correspondence~\eqref{tor:fun_orb}, we have $a_r \notin \rho_r^\perp$, hence $\langle p_r, a_r \rangle \neq 0$. Moreover, $\langle p_r, a_r \rangle > 0$ since ${a_r \in \tau^\vee \subset \rho_r^\vee}$. On the other hand, the function $\Alp{r}$ does not vanish on points of the orbit $O_{p_j}$, so $a_r \in \rho_j^\perp$ for $j \neq r$, which means that $\langle p_j, a_r \rangle = 0$.

Next, note that by formula~\eqref{1eq:act} the set 
\[O_\tau = \{ \ol{\alpha} \cdot t \in  \GG \mid \alpha_r = 0 \;\; \forall r\} = T\] is an $(n-k)$-dimensional orbit under the action of~$\TT$. Consider the sublattice ${M(\tau) = \tau^\perp \cap M}$. Then the torus $\text{Hom}(M(\tau), \KK^\times)$ can be identified with the torus $T = O_\tau$, hence $\KK[T] = \KK[M(\tau)].$

Since $\KK[\GG] = \KK[T][\Alp{1}]\ldots[\Alp{k}]$, the semigroup $S_\tau$ is generated by the sublattice $M(\tau)$ together with all $a_r \in M$. Then any element $u \in S_\tau$ can be written as the sum of an element from $\tau^\perp$ and an integer linear combination of the elements $a_r$. Consequently, $\langle p_r, u \rangle$ is divisible by $\langle p_r, a_r \rangle$, which means that $S_\tau$ is not a saturated semigroup when $\langle p_r, a_r\rangle > 1$. Thus, $\langle p_r, a_r \rangle = 1$.

Similarly, $\langle p_r, b_r \rangle = 1$. Hence, $a_r - b_r \in \tau^\perp$.	

\begin{proposition}\label{1l:comult} 
The comultiplication $\KK[\GG] \rightarrow \KK[\GG] \otimes \KK[\GG]$ is given by
\begin{equation}\label{1l:comu}
	\chi^u \mapsto \chi^u \otimes \chi^u \prod_{r = 1}^{k} (1 \otimes \Alp{r}^{-1} + \Bet{r}^{-1} \otimes 1)^{\langle p_r, u\rangle},
\end{equation}  
where $u \in S_\tau$, $\KK[\GG] = \bigoplus\limits_{u \in S_\tau} \KK \chi^u.$  
\end{proposition}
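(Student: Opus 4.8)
The plan is to use that the comultiplication $\mu^{*}\colon\KK[\GG]\to\KK[\GG]\otimes\KK[\GG]$, dual to the group multiplication $\mu\colon\GG\times\GG\to\GG$ (so that $\mu^{*}(f)$ is the function $(g_{1},g_{2})\mapsto f(g_{1}g_{2})$), is an algebra homomorphism, and that the right-hand side of \eqref{1l:comu} also defines an algebra homomorphism: it respects multiplication because $\langle p_{r},\cdot\rangle$ is additive, and, although written with $\Alp{r}^{-1}$ and $\Bet{r}^{-1}$, it really takes values in $\KK[\GG]\otimes\KK[\GG]$, since upon expanding, the $r$-th factor contributes at most $\langle p_{r},u\rangle$ negative powers of $\Alp{r}$ in the second tensor slot and of $\Bet{r}$ in the first, and $\langle p_{j},\,u-\langle p_{r},u\rangle a_{r}\rangle\ge 0$, $\langle p_{j},\,u-\langle p_{r},u\rangle b_{r}\rangle\ge 0$ for all $j$ because $\tau$ is regular and $\langle p_{j},a_{r}\rangle=\langle p_{j},b_{r}\rangle=\delta_{jr}$. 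It then suffices to check that $\mu^{*}$ and the map \eqref{1l:comu} agree on a generating set of $\KK[\GG]$; since $\KK[\GG]=\KK[T][\Alp{1},\ldots,\Alp{k}]$ with $\KK[T]=\KK[M(\tau)]$, $M(\tau)=\tau^{\perp}\cap M$ (Section~\ref{1sec:orb}), I would only need to treat the generators $\Alp{1},\ldots,\Alp{k}$ and $\chi^{v}$ with $v\in M(\tau)$.

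For $v\in M(\tau)$ one has $\langle p_{r},v\rangle=0$ for all $r$, so \eqref{1l:comu} would send $\chi^{v}\mapsto\chi^{v}\otimes\chi^{v}$; I would justify this by noting that $\chi^{v}$ is a character of the group $\GG$ — it is invertible in $\KK[\GG]$ (both $v$ and $-v$ lie in $S_{\tau}$), it equals $1$ at the neutral element $1\in O_{\tau}=T$ (where $\chi^{v}$ restricts to a character of $T$), and an invertible regular function taking the value $1$ at the identity of a connected group is a character; equivalently, $\chi^{v}$ is pulled back from a character of $T\cong\GG/U$. For the generator $\Alp{r}$ I would compute $\mu^{*}(\Alp{r})$ directly from \eqref{def:mult}: with $g_{1}=\ol{\alpha}\cdot t$, $g_{2}=\ol{\alpha'}\cdot t'$ one gets $\Alp{r}(g_{1}g_{2})=\alpha_{r}+\chi_{r}(t)\,\alpha'_{r}$, and since $\Alp{r}=\chi^{a_{r}-b_{r}}\Bet{r}$ and $\chi_{r}=\chi^{a_{r}-b_{r}}$ (Section~\ref{sec:conhom}), the function $\chi^{a_{r}-b_{r}}$ sends $\ol{\alpha}\cdot t$ to $\chi_{r}(t)$, so $\mu^{*}(\Alp{r})=\Alp{r}\otimes 1+\chi^{a_{r}-b_{r}}\otimes\Alp{r}$. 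This matches \eqref{1l:comu} evaluated at $u=a_{r}$ (where $\chi^{a_{r}}=\Alp{r}$, $\chi^{a_{r}}\Alp{r}^{-1}=1$, $\chi^{a_{r}}\Bet{r}^{-1}=\chi^{a_{r}-b_{r}}$, and $\langle p_{s},a_{r}\rangle=\delta_{rs}$), which finishes the argument.

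To make the general formula transparent I would also assemble it directly: every $u\in S_{\tau}$ is uniquely $u=v+\sum_{r=1}^{k}\langle p_{r},u\rangle a_{r}$ with $v\in M(\tau)$ (Section~\ref{1sec:orb}), so $\chi^{u}=\chi^{v}\prod_{r}\Alp{r}^{\langle p_{r},u\rangle}$; applying $\mu^{*}$ gives $\mu^{*}(\chi^{u})=(\chi^{v}\otimes\chi^{v})\prod_{r}(\Alp{r}\otimes 1+\chi^{a_{r}-b_{r}}\otimes\Alp{r})^{\langle p_{r},u\rangle}$, and substituting $\chi^{u}\otimes\chi^{u}=(\chi^{v}\otimes\chi^{v})\prod_{r}(\Alp{r}\otimes\Alp{r})^{\langle p_{r},u\rangle}$ in \eqref{1l:comu} and absorbing one factor $(\Alp{r}\otimes\Alp{r})^{\langle p_{r},u\rangle}$ into the $r$-th bracket turns $(1\otimes\Alp{r}^{-1}+\Bet{r}^{-1}\otimes 1)^{\langle p_{r},u\rangle}$ into $(\Alp{r}\otimes 1+\chi^{a_{r}-b_{r}}\otimes\Alp{r})^{\langle p_{r},u\rangle}$ (using $\Alp{r}\Bet{r}^{-1}=\chi^{a_{r}-b_{r}}$), which is exactly $\mu^{*}(\chi^{u})$.

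The only genuinely delicate point is the bridge between the group structure of Section~\ref{aff} and the toric model — namely that the character $\chi_{r}$ of $T$, evaluated on the $T$-component of a point of $\GG$, is realized by the regular function $\chi^{a_{r}-b_{r}}$, and that every $\chi^{v}$, $v\in M(\tau)$, is a genuine character of $\GG$. Both facts rest on the identifications $\Alp{r}=\chi^{a_{r}}$, $\Bet{r}=\chi^{b_{r}}$, $\chi_{r}=\chi^{a_{r}-b_{r}}$ and $\KK[T]=\KK[M(\tau)]$ established in Sections~\ref{sec:conhom} and~\ref{1sec:orb}; granting those, the remaining computation is routine.
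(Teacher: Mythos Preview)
Your proof is correct and follows essentially the same route as the paper: both reduce to a check on generators of $S_\tau$ (namely $u\in M(\tau)$ and $u=a_r$) via multiplicativity of the comultiplication and additivity of $\langle p_r,\cdot\rangle$, then verify the two cases using the identification $\KK[T]=\KK[M(\tau)]$ and the explicit multiplication formula~\eqref{def:mult}. Your version adds the well-definedness check that the right-hand side of~\eqref{1l:comu} actually lands in $\KK[\GG]\otimes\KK[\GG]$ and an explicit reassembly of the general formula, both of which the paper omits here, but the core argument is the same.
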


\begin{proof}
Recall that the pairing $\langle\cdot,\cdot\rangle$ is linear in each argument, and for characters we have $\chi^{u_1+u_2} = \chi^{u_1}\chi^{u_2}$. Therefore, it is enough to verify formula~\eqref{1l:comu} for generators of the semigroup~$S_\tau$, i.e., for $u \in M(\tau)$ and for $u = a_r$.

Since we previously identified $\Hom(M(\tau), \KK^\times)$ with the torus $T \subseteq \GG$, for all $u \in M(\tau)$, i.e. for the characters $\chi^u \in \KK[T]$, the comultiplication takes the form $\chi^u \mapsto \chi^u \otimes \chi^u$. This coincides with formula~\eqref{1l:comu} because $\langle p_r, u \rangle = 0$.

Now let us check the case $u = a_r$, i.e. $\chi^u = \chi^{a_r} = \Alp{r}$. Since multiplication in the group $\GG$ is given by formula~\eqref{def:mult}, we have
$$\Alp{r} \mapsto \Alp{r} \otimes 1  + \chi_r(t)\otimes \Alp{r} =\Alp{r} \otimes \Alp{r} (1 \otimes \Alp{r}^{-1}  + \chi_r(t) \Alp{r}^{-1}\otimes 1).$$
To complete the proof, it remains to note that $\chi_r(t)\Alp{r}^{-1} = \Bet{r}^{-1}$ and $\langle p_j, a_r \rangle = \delta_{rj}$, where $\delta_{rj}$ is the Kronecker symbol.
\end{proof}

\section{Root monoids}\label{sec:afftor}
\subsection{Classification of root monoids}
In this section we introduce the structure of a monoid with group of invertible elements $\GG$ on affine toric varieties.

\begin{definition}
Let $\tau$ be a regular $k$-dimensional face of the cone $\sigma$, and let $p_1, \ldots, p_k$ be the primitive vectors on the one-dimensional faces of $\tau$. We call a set of Demazure roots of the cone~$\sigma$ $$\{e_1^{(r)}, e_2^{(r)} \mid r = 1,\ldots,k\}$$ \textit{compatible with the face $\tau$} if $\langle p_s, e_1^{(r)} \rangle = \langle p_s, e_2^{(r)} \rangle = -\delta_{rs}$, where $\delta_{rs}$ is the Kronecker symbol, $r, s = 1, \ldots, k$.
In particular, this condition implies that ${e_1^{(r)}, e_2^{(r)} \in \mathfrak{R}_r}$.
\end{definition}

In Section~\ref{1sec:orb} we identified the torus $T$ with $\text{Hom}(M(\tau), \KK^\times)$. If Demazure roots are compatible with the face $\tau$, then $e_2^{(r)} - e_1^{(r)} \in \tau^\perp$, and therefore the characters $\chi_r := \chi^{e_2^{(r)} - e_1^{(r)}}$ of the torus $\TT$, which acts on the variety $X_\sigma$, can be regarded as characters of the torus $T$. Thus, a compatible set of Demazure roots determines the structure of a semidirect product $\GG = \G_a^k \leftthreetimes T$ by Lemma~\ref{lmYZ}. 

\begin{theorem}\label{thm:mon}
Let $X = X_\sigma$ be an affine toric variety, the cone $\sigma$ contain a $k$-dimensional regular face $\tau \subset \sigma$, and $\{e_1^{(r)}, e_2^{(r)} \mid r = 1,\ldots,k\}$ be a set of Demazure roots compatible with $\tau$. Then the map $\KK[X] \rightarrow \KK[X] \otimes \KK[X]$ given by
\begin{equation}\label{thm:finalcom}
	\chi^u \mapsto \chi^u \otimes \chi^u \prod_{r = 1}^{k} (1 \otimes \chi^{e_1^{(r)}} + \chi^{e_2^{(r)}} \otimes 1)^{\langle p_r, u\rangle},
\end{equation}
where $\KK[X_\sigma] = \bigoplus_{u \in S_\sigma} \KK \chi^u$, defines a comultiplication that determines a monoid structure on~$X$ with group of invertible elements~$\GG$, where $\chi_r = \chi^{e_2^{(r)} - e_1^{(r)}}$. Moreover, the group of invertible elements coincides with the open subset $X_\tau$.
\end{theorem}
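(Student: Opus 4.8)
The plan is to verify that the formula \eqref{thm:finalcom} is well defined, coassociative, counital, and that the resulting monoid has the claimed group of invertible elements. First I would check \emph{well-definedness}: since $\{e_1^{(r)}, e_2^{(r)}\}$ is compatible with $\tau$, each $e_i^{(r)}$ lies in $\mathfrak{R}_r$, so for any $u \in S_\sigma$ and any ray generator $p_j$ of $\sigma$ one computes $\langle p_j, u + \sum_{r} c_r e_i^{(r)} \rangle \ge 0$ whenever the $c_r \ge 0$ are bounded by $\langle p_r, u\rangle$; hence every monomial $\chi^u \otimes \chi^u \cdot \chi^{\sum c_1^{(r)} e_?^{(r)}} \otimes \chi^{\sum c_2^{(r)} e_?^{(r)}}$ appearing after expanding the product actually lies in $\KK[X_\sigma] \otimes \KK[X_\sigma]$. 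I would also note that the exponent $\langle p_r, u\rangle$ is a nonnegative integer for all $u \in S_\sigma$, so the product makes sense. The counit axiom is immediate: evaluating the second tensor factor at the neutral element sends $\chi^{e_1^{(r)}} \mapsto 1$ (and $\chi^{e_2^{(r)}} \mapsto 0$, or vice versa, by \eqref{tor:fun_orb} for the appropriate face), collapsing the product to $\chi^u$; the symmetric computation handles the other side.

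The main work is \textbf{coassociativity}. Here I would not grind through the binomial bookkeeping directly on $X_\sigma$; instead I would reduce to the group case already settled in Proposition~\ref{1l:comult}. The key observation is that the morphism $X_\tau \hookrightarrow X_\sigma$ is the localization of $\KK[S_\sigma]$ at the monomials $\chi^{a_r}$ (equivalently, at a character $\chi^{u'}$ in the relative interior of $\tau^\perp \cap \sigma^\vee$), so $\KK[X_\tau]$ is a localization of $\KK[X_\sigma]$, and in particular $\KK[X_\sigma] \hookrightarrow \KK[X_\tau]$ is injective. Under the identification $\chi^{e_1^{(r)}} \leftrightarrow \Alp{r}^{-1}$ and $\chi^{e_2^{(r)}} \leftrightarrow \Bet{r}^{-1}$ (which is consistent: $e_2^{(r)} - e_1^{(r)}$ maps to $\Bet{r}^{-1}\Alp{r} = \chi_r^{-1}$, matching $a_r - b_r$ up to the chosen normalization), formula \eqref{thm:finalcom} restricted to $\KK[X_\tau]$ is exactly formula \eqref{1l:comu}, which we already know is a coassociative comultiplication defining the group structure $\GG$. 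Since $\KK[X_\sigma] \otimes \KK[X_\sigma] \otimes \KK[X_\sigma] \hookrightarrow \KK[X_\tau] \otimes \KK[X_\tau] \otimes \KK[X_\tau]$ is injective and the two iterated comultiplications of \eqref{thm:finalcom} become the two iterated comultiplications of \eqref{1l:comu} after this embedding, they agree on $\KK[X_\sigma]$ as well. This also shows that the comultiplication on $\KK[X_\sigma]$ indeed lands in $\KK[X_\sigma] \otimes \KK[X_\sigma]$ and is compatible with the one on $\KK[X_\tau]$, i.e. the inclusion $X_\tau \subseteq X_\sigma$ is a morphism of monoids.

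Finally, for the statement about invertible elements: the group of invertible elements $G(X)$ is open in $X_\sigma$, contains the open torus $X_0 \subseteq X_\tau$, and is a subgroup. Since the monoid structure on $X_\tau$ is, by the previous paragraph, exactly the group $\GG$, we get $X_\tau \subseteq G(X)$. For the reverse inclusion, one checks that a point $x \in X_\sigma \setminus X_\tau$ cannot be invertible: such a point is a semigroup homomorphism $S_\sigma \to \KK$ sending $u'$ (in the relative interior of $\tau^\perp \cap \sigma^\vee$) to $0$; since $\chi^{u'}$ is a nonzero regular function on $X_\sigma$ and the product formula gives $\Delta(\chi^{u'}) = \chi^{u'} \otimes \chi^{u'} \cdot (\text{positive-degree stuff})$ with $\langle p_r, u'\rangle = 0$ for all $r$, so actually $\Delta(\chi^{u'}) = \chi^{u'} \otimes \chi^{u'}$, meaning $\chi^{u'}$ is a character-like function; if $x$ were invertible there would be $y$ with $x * y = 1$, forcing $\chi^{u'}(x)\chi^{u'}(y) = \chi^{u'}(1) = 1$, contradicting $\chi^{u'}(x) = 0$. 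Hence $G(X) = X_\tau$, and the induced group law there is $\GG$ with $\chi_r = \chi^{e_2^{(r)} - e_1^{(r)}}$ as computed. The step I expect to be the genuine obstacle is making the reduction to Proposition~\ref{1l:comult} fully rigorous — in particular confirming that the normalization of the coordinates $\Alp{r}, \Bet{r}$ can be chosen so that $\chi^{e_1^{(r)}} = \Alp{r}^{-1}$ and $\chi^{e_2^{(r)}} = \Bet{r}^{-1}$ on the nose, rather than merely up to scalars, which requires tracking the precise freedom in Lemma~\ref{lmYZ} and Section~\ref{sec:conhom}.
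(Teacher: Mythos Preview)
Your approach is essentially the paper's: check well-definedness by the Demazure root inequalities, reduce coassociativity to Proposition~\ref{1l:comult} via the injective localization $\KK[X_\sigma]\hookrightarrow\KK[X_\tau]$, and identify $G(X)$ using the group-like element $\chi^{u'}$ for $u'$ in the relative interior of $\tau^\perp\cap\sigma^\vee$.

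Two small points. First, your counit paragraph is not quite right: the characters $\chi^{e_i^{(r)}}$ are \emph{not} regular functions on $X_\sigma$ (nor even on $X_\tau$, since $\langle p_r,-e_i^{(r)}\rangle=1>0$ forces $\chi^{-e_i^{(r)}}(x_\tau)=0$), so you cannot evaluate them individually at the neutral element. The paper instead expands the product first and observes that in $\chi^u(x_\tau*x)$ the only surviving term is the one with $u+\ol{i}\cdot\ol{e_2}\in\tau^\perp$, which forces $\ol{i}=\langle\ol{p},u\rangle$ and hence $\ol{j}=\ol{0}$. Second, the normalization worry you flag at the end is resolved in the paper exactly as you suspect is possible: one simply \emph{defines} $\Alp{r}:=\chi^{-e_1^{(r)}}$ and $\Bet{r}:=\chi^{-e_2^{(r)}}$ on $X_\tau$ (these lie in $\KK[X_\tau]$ because $\langle p_s,-e_i^{(r)}\rangle=\delta_{rs}\ge0$), and then \eqref{thm:finalcom} literally becomes \eqref{1l:comu} with $\chi_r=\Alp{r}/\Bet{r}=\chi^{e_2^{(r)}-e_1^{(r)}}$.
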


\begin{definition}
The monoids constructed in Theorem~\ref{thm:mon} are called \textit{root monoids}.
\end{definition}
 
\begin{remark}
Recall that a monoid $X$ is called commutative if the multiplication on $X$ is commutative, which is equivalent to commutativity of the group of invertible elements. The group $\GG$ is commutative if and only if $\ol{\chi} = \ol{0}$. Hence, the root monoid $X$ is commutative if and only if $e_1^{(r)} = e_2^{(r)}$ for all $r = 1, \ldots, k.$
\end{remark}

\subsection{Proof of Theorem~\ref{thm:mon}}

First, let us show that formula~\eqref{thm:finalcom} indeed defines a comultiplication on $\KK[X]$.

Introduce some notation. Let $\ol{p} = (p_1, \ldots, p_k)$ be the set of primitive vectors on the rays of the cone $\tau$. Define the generalized scalar product $\langle \ol{p}, u \rangle = (\langle p_1, u \rangle, \ldots,  \langle p_k, u \rangle)$. 
We also denote by $\ol{i} = (i_1, \ldots, i_k) \in \ZZ^k$ a tuple of nonnegative integers, and by
${ \langle \ol{p}, u \rangle \choose \ol{i} }  =  { \langle p_1, u \rangle \choose i_1} \cdot \ldots \cdot  { \langle p_k, u \rangle \choose i_k}$
the product of binomial coefficients. Then formula~\eqref{thm:finalcom} can be rewritten as
\begin{equation}\label{1fin_mult}
	\chi^u \mapsto \chi^u \otimes  \chi^u  \prod_{r = 1}^{k} (1 \otimes \chi^{e_1^{(r)}} + \chi^{e_2^{(r)}} \otimes 1)^{\langle p_r, u\rangle} = \sum \limits_{\ol{i} + \ol{j} = \langle \ol{p}, u \rangle } { \langle \ol{p}, u \rangle \choose \ol{i} } \chi^{u + \ol{i}\cdot \ol{e_2}} \otimes \chi^{u + \ol{j}\cdot \ol{e_1}}.
\end{equation}

To check that the result of this operation belongs to $\KK[X] \otimes \KK[X]$, we need to verify that $u + \ol{i}\cdot \ol{e_2} = u + i_1 e_2^{(1)} + \ldots + i_k e_2^{(k)} = u + \sum\limits_{r = 1}^k i_r e_2^{(r)} \in S_\sigma$, which is equivalent to $\langle p_s, u + \ol{i}\cdot \ol{e_2} \rangle \geq 0$ and $\langle q, u + \ol{i}\cdot \ol{e_2} \rangle \geq 0$ for the remaining primitive vectors $q$ on the rays of the cone~$\sigma$. Note that since $e_2^{(r)}$ is a Demazure root compatible with $\tau$, we have $\langle p_r, e_2^{(r)} \rangle = -1$, $\langle p_s, e_2^{(r)} \rangle = 0$ for $r \neq s$, and its pairing with the other primitive vectors on the rays of~$\sigma$ is nonnegative. Let us first check $\langle p_s, u + \ol{i}\cdot \ol{e_2} \rangle \geq 0$. In formula~\eqref{1fin_mult} the summation runs over those $\ol{i}$ such that $i_s \leq \langle p_s, u \rangle$. Hence, $\langle  p_s, u + \ol{i}\cdot \ol{e_2} \rangle = \langle p_s, u \rangle + \langle p_s,  i_s e_2^{(s)} \rangle = \langle  p_s, u \rangle - i_s \geq 0$. The second condition $\langle q, u + \ol{i}\cdot \ol{e_2} \rangle \geq 0$ also holds, because $\langle q, u\rangle \geq 0$ and $\langle q, i_re_2^{(r)}\rangle = i_r \langle q, e_2^{(r)}\rangle \geq 0$ for all~$r$. Thus, $\chi^{u + \ol{i}\cdot \ol{e_2}} \in \KK[X]$, and similarly $\chi^{u + \ol{j}\cdot \ol{e_1}} \in \KK[X]$.

Now let us check the existence of a neutral element. For a cone $\tau \subset \sigma$ there exists (see Section~\ref{sec:tor}) a distinguished point $x_\tau \in X = X_\sigma$ such that
\begin{equation}
	\begin{cases} 
		\chi^u(x_\tau) = 1, \text{ if } u \in \tau^\perp\\
		\chi^u(x_\tau) = 0 \text{ otherwise}.
	\end{cases}
\end{equation}

Apply formula~\eqref{1fin_mult} to a pair of points $x, y \in X$:
\begin{equation}
	\chi^u (x * y) = \sum \limits_{\ol{i} + \ol{j} = \langle \ol{p}, u \rangle } { \langle \ol{p}, u \rangle \choose \ol{i} } \chi^{u + \ol{i}\cdot \ol{e_2}}(x) \cdot  \chi^{u + \ol{j}\cdot \ol{e_1}} (y).
\end{equation}

In $\chi^u (x_\tau * x)$, the only nonzero terms are those for which $u + \ol{i}\cdot \ol{e_2} = u + i_1 e_2^{(1)} + \ldots + i_k e_2^{(k)} \in \tau^\perp$, which is equivalent to $\langle p_s, u + \ol{i}\cdot \ol{e_2} \rangle = 0$ for all $s = 1, \ldots, k$. Since $\langle p_s, e_2^{(r)}\rangle = -\delta_{rs}$, we obtain $\langle p_s, u + \ol{i}\cdot \ol{e_2} \rangle = \langle p_s, u + i_s e_2^{(s)} \rangle = 0$, which is equivalent to $i_s = \langle p_s, u \rangle$. Hence, exactly one term remains, corresponding to $\ol{i} = \langle \ol{p}, u \rangle$. Therefore,
\begin{equation}
	\chi^u (x_\tau * x) =  \chi^{u + \langle \ol{p}, u \rangle\cdot \ol{e_2}}(x_\tau) \cdot  \chi^{u}(x) =  \chi^{u} (x).
\end{equation}

Similarly, $\chi^u (x * x_\tau) = \chi^{u} (x)$, so $x_\tau$ is the neutral element. 

Now let us show that the set of invertible elements of the monoid $X_\sigma$ coincides with $X_\tau$. An element $x \in X_\sigma$ is invertible if there exists $y \in X_\sigma$ such that $x * y = x_\tau$, i.e. $\chi^u (x_\tau) =  \chi^u (x * y)$ for all $u \in S_\sigma$. Take $u$ from the relative interior of $\tau^\perp \cap \sigma^\vee$. Then $\langle p_r, u \rangle = 0$ for all $r = 1, \ldots, k$ and $\chi^u(x_\tau) = 1$. Hence,
\begin{equation}
	1 = \chi^u (x_\tau) =  \chi^u (x * y) = \chi^u (x)  \chi^u (y)  \prod_{r = 1}^{k} (\chi^{e_1^{(r)}}(y) + \chi^{e_2^{(r)}} (x))^{\langle p_r, u\rangle} = \chi^u (x)  \chi^u (y).
\end{equation}

Thus, $\chi^u(x) \in \KK^\times$. Using~\eqref{tor:fun_orb2}, we obtain $x \in X_\tau$.

Note that the variety $X_\tau$ is isomorphic to the product of a $k$-dimensional affine space and an $(n-k)$-dimensional torus. Therefore, we can identify $X_\tau$ with the group $\GG \cong \G_a^k \leftthreetimes T$, where $\chi_r = \chi^{e_2^{(r)} - e_1^{(r)}}$. Indeed, since $\langle p_s, e_1^{(r)} \rangle = \langle p_s, e_2^{(r)} \rangle = -\delta_{rs}$, and because $- e_1^{(r)}, - e_2^{(r)} \in \tau^\vee$, the characters $\Alp{r} := \chi^{- e_1^{(r)}}$ and $\Bet{r} := \chi^{- e_2^{(r)}}$ are regular functions on $X_\tau$. Then the restriction of comultiplication~\eqref{thm:finalcom} to the open subset $X_\tau$ coincides with formula~\eqref{1l:comu} for $\chi_r = \Alp{r}/\Bet{r}$. Thus, the dual map $X_\tau \times X_\tau \to X_\tau$ extends to a regular morphism $X \times X \to X$, with $X_\tau \cong \GG$. Since multiplication in $\GG$ is associative and has a neutral element, the same holds for $X$. Therefore, $X$ is an affine monoid with group of invertible elements $\GG$.

\subsection{On the existence of root monoids}
In this subsection we show that using the construction from Theorem~\ref{thm:mon} one can build an affine monoid with any prescribed group of invertible elements $\GG$. 	
\begin{proposition}\label{prop:exist}
	For any group $\GG = \G_a^k \leftthreetimes T$ and any cone $\sigma$ with a $k$-dimensional regular face $\tau$, there exists a monoid structure on $X_\sigma$ whose group of invertible elements is isomorphic to $\GG$. 
\end{proposition}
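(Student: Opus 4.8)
The plan is to reduce everything to Theorem~\ref{thm:mon}: that theorem guarantees that as soon as we have a set of Demazure roots $\{e_1^{(r)},e_2^{(r)}\mid r=1,\dots,k\}$ of $\sigma$ compatible with $\tau$, formula~\eqref{thm:finalcom} produces a monoid structure on $X_\sigma$ whose group of invertible elements is $\G_a^k\leftthreetimes T$ with characters $\chi_r=\chi^{e_2^{(r)}-e_1^{(r)}}$. So it suffices to realize the prescribed characters. Writing $\GG=\G_a^k\leftthreetimes T$ via Lemma~\ref{lmYZ}, its semidirect product structure is recorded by characters $\chi_1,\dots,\chi_k$ of $T$; since $\tau$ is $k$-dimensional and $\dim X_\sigma=\dim\GG=n$, the sublattice $M(\tau)=\tau^\perp\cap M$ has rank $n-k=\dim T$, and under the identification $T\cong\Hom(M(\tau),\KK^\times)$ each $\chi_r$ becomes a well-defined element $\chi_r\in M(\tau)\subset M$. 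Thus I must construct, for every $r$, elements $e_1^{(r)},e_2^{(r)}\in M$ with $\langle p_s,e_1^{(r)}\rangle=\langle p_s,e_2^{(r)}\rangle=-\delta_{rs}$ for $s=1,\dots,k$, with $\langle q,e_1^{(r)}\rangle\ge0$ and $\langle q,e_2^{(r)}\rangle\ge0$ for every primitive generator $q$ of a ray of $\sigma$ not among $p_1,\dots,p_k$, and with $e_2^{(r)}-e_1^{(r)}=\chi_r$.

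For the construction I would extend $p_1,\dots,p_k$ to a $\ZZ$-basis $p_1,\dots,p_n$ of $N$ (possible since $\tau$ is regular) and take the dual basis $p_1^*,\dots,p_n^*$ of $M$. Next I would pick, as in Section~\ref{sec:tor}, a lattice point $u'\in M$ in the relative interior of the dual face $\tau^\perp\cap\sigma^\vee$ of $\sigma^\vee$, so that $\tau=\sigma\cap u'^\perp$; then $u'\in\tau^\perp$, hence $\langle p_s,u'\rangle=0$ for $s\le k$, while $\langle q,u'\rangle>0$ for every ray generator $q$ of $\sigma$ not lying in $\tau$. Finally I would set, for each $r\le k$,
\[
e_1^{(r)}=-p_r^*+Nu',\qquad e_2^{(r)}=-p_r^*+\chi_r+Nu',
\]
where $N\in\ZZ_{>0}$ is chosen large enough that $N\langle q,u'\rangle\ge\langle q,p_r^*\rangle$ and $N\langle q,u'\rangle\ge\langle q,p_r^*\rangle-\langle q,\chi_r\rangle$ hold for all $r\le k$ and all ray generators $q$ of $\sigma$ not in $\tau$; such an $N$ exists because there are finitely many such $q,r$ and every $\langle q,u'\rangle$ is a positive integer.

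The verification is then routine. For $s\le k$ the summands $Nu'$ and $\chi_r$ lie in $\tau^\perp$, so $\langle p_s,e_i^{(r)}\rangle=-\langle p_s,p_r^*\rangle=-\delta_{rs}$ for $i=1,2$; in particular $\langle p_r,e_i^{(r)}\rangle=-1$. For a ray generator $q$ of $\sigma$ not in $\tau$, the choice of $N$ gives $\langle q,e_1^{(r)}\rangle=-\langle q,p_r^*\rangle+N\langle q,u'\rangle\ge0$ and likewise $\langle q,e_2^{(r)}\rangle\ge0$, whence $e_1^{(r)},e_2^{(r)}\in\mathfrak{R}_r$ and the whole collection is compatible with $\tau$. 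By construction $e_2^{(r)}-e_1^{(r)}=\chi_r$, so Theorem~\ref{thm:mon} applied to this collection yields a monoid structure on $X_\sigma$ with group of invertible elements $\G_a^k\leftthreetimes T$ carrying exactly the characters $\chi_1,\dots,\chi_k$, i.e.\ isomorphic to $\GG$.

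I expect the only genuine point to be the step from ``a compatible Demazure root exists'' to ``a compatible Demazure root whose shift by $\chi_r$ is still a Demazure root exists''. An element with the required pairings $-\delta_{rs}$ on the rays of $\tau$ is immediate (essentially $-p_r^*$), and pushing it far into $\sigma^\vee$ along $u'$ makes it an honest Demazure root; but since $\chi_r\in M(\tau)$ need not pair nonnegatively with the rays of $\sigma$ outside $\tau$, we must make $e_1^{(r)}$ not merely compatible but ``deep enough'' in the $u'$-direction so that $e_1^{(r)}+\chi_r$ is compatible as well. That is precisely what the choice of $N$ achieves; everything else is bookkeeping, and no real obstacle arises.
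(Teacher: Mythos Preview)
Your proposal is correct and follows essentially the same approach as the paper: reduce to Theorem~\ref{thm:mon}, then construct compatible Demazure roots by starting from an element with pairings $-\delta_{rs}$ on the rays of $\tau$ and translating by a large multiple of a lattice point in the relative interior of $\tau^\perp\cap\sigma^\vee$, so that the prescribed shift by $\chi_r$ stays a Demazure root. The paper phrases the final step slightly more abstractly (any lattice point in $u_r+Nv+\tau^*$ works, so two with the desired difference exist), while you write down $e_1^{(r)}=-p_r^*+Nu'$ and $e_2^{(r)}=e_1^{(r)}+\chi_r$ explicitly, but the substance is identical.
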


\begin{proof}
We prove that for any group $\GG$ one can choose Demazure roots in Theorem~\ref{thm:mon} in such a way that the introduced monoid structure has a group of invertible elements isomorphic to~$\GG$. In subsections~\ref{sec:conhom} and~\ref{1sec:orb} we showed that all characters have the form $\chi_r = \chi^{a_r - b_r}$ with $a_r - b_r \in \tau^\perp$. At the same time, $a_r = -e_1^{(r)}$, $b_r = -e_2^{(r)}$ in accordance with the proof of Theorem~\ref{thm:mon}. Thus, to complete the proof it remains to prove the following lemma.
\begin{lemma}
	For any set $c_1, \ldots, c_k \in \tau^\perp \cap M$, where $\tau$ is a regular face of the cone $\sigma$, one can choose a set of Demazure roots $\{ e_1^{(r)}, e_2^{(r)} \}$ compatible with $\tau$ such that for all $r = 1, \ldots, k$ the equality $e_1^{(r)} - e_2^{(r)} = c_r$ holds.
\end{lemma}
\begin{proof}
	Denote by $p_1, \ldots, p_k$ the primitive vectors on the rays $\rho_1, \ldots, \rho_k$ of the cone~$\tau$. Fix $r$. Note that there exists $u_r \in M$ such that 
	\begin{equation}\label{cond}
		\langle p_r, u_r \rangle  = -1 \text{ and } \langle p_s, u_r \rangle  = 0 \text{ for all } s \neq r. 
	\end{equation}
	We will show that one can choose two Demazure roots $e_1^{(r)}, e_2^{(r)}$ satisfying condition~\eqref{cond} in such a way that their difference equals $c_r \in \tau^\perp$. Recall that if $\gamma$ is a face of the cone $\sigma$, then $\gamma^* = \gamma^\perp \cap \sigma^\vee$ is a face of the dual cone $\sigma^\vee$, with $\dim \gamma + \dim \gamma^* = n = \dim \sigma$, see~\cite[Proposition 1.2.10]{F}. Take $v \in M$ from the relative interior of the face $\tau^* = \tau^\perp \cap \sigma^\vee$. Then $\langle p_s, v \rangle = 0$ for all $s = 1, \ldots, k$, and $\langle q, v \rangle > 0$ for all other primitive vectors $q$ on the rays of the cone $\sigma$. Hence, for some $N \in \ZZ_{\geq 0}$ the element $u_r + Nv$ is a Demazure root of the cone $\sigma$ and satisfies condition~\eqref{cond}. The same holds for any lattice point in the cone $u_r + Nv + \tau^*$. Therefore, within this cone one can choose two Demazure roots whose difference equals any prescribed vector $c_r \in \tau^\perp \cap M$. 
\end{proof}
Proposition~\ref{prop:exist} is proved.
\end{proof}

\section{Active monoids}\label{sec:actmon}
\begin{definition}
A semidirect product $G = U \leftthreetimes T$ is called \textit{active} if $\dim T + \dim \Im \psi = \dim G$, where $\psi\colon T \to \Aut U$ is the homomorphism defining the semidirect product.
\end{definition}

The notion of an active semidirect product was introduced in~\cite{YZ}. Recall that by Lemma~\ref{lmYZ} one can introduce coordinates on $U$ such that $$\psi(t) = \mathrm{diag}(\chi_1(t), \ldots, \chi_k(t)) \in \GL(U),$$ where $\chi_1(t), \ldots, \chi_k(t)$ are characters of the torus $T$. The group $G$ is active if and only if the characters $\chi_1(t), \ldots, \chi_k(t)$ are linearly independent.

\begin{remark}
If $G = U \leftthreetimes T$ is active, then $\dim U \leq \dim T$.
\end{remark}

The main goal of this paper is the classification of affine monoids with active groups of invertible elements in terms of the comultiplication on the algebra of regular functions. In~\cite{YZ} it was shown that such monoids are toric varieties. Consider the map $\theta\colon T \times T \rightarrow \Aut(G)$ defined as follows: an element $(t_1, t_2) \in T \times T$ acts on an element $g \in G$ by $t_1 g t_2^{-1}$. Denote $\TT = \Im \theta$. Since the torus $\TT$ acts on $G$ by left and right multiplications, and $X$ is an affine monoid with group of invertible elements $G$, the action of the torus $\TT$ on $G$ extends to an action of $\TT$ on $X$, see~\cite[Theorem 1]{Vin} for the case of reductive monoids in characteristic~$0$ and~\cite[Proposition 1]{Ri1} for the general case.

\begin{proposition}\label{pr:toric} \cite[Proposition 2]{YZ} 
Let $X$ be an affine monoid with active group of invertible elements $G$. Then $X$ is a toric variety with respect to the action of the torus $\TT$, and the group of invertible elements $G$ is invariant under the action of $\TT$.
\end{proposition}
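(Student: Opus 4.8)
The plan is to show that $X$ is a toric variety with respect to the $\TT$-action, and that $G$ is $\TT$-invariant. We already know that $G$ is connected solvable, so $G = U \leftthreetimes T$ with $U \cong \G_a^k$ by Lemma~\ref{thm:commute} (activeness forces $U$ commutative), and that $\TT = \Im\theta$ acts on $G$ by $g \mapsto t_1 g t_2^{-1}$. The first step is to compute $\Ker\theta$ and hence $\dim\TT$. Just as in Section~\ref{sec:conhom}, $\Ker\theta$ consists of the pairs $(t_1,t_2)$ acting trivially on $G$; since $G$ is generated by $U$ and $T$, one finds $\Ker\theta = \{(t,t) \mid t \in \bigcap_j \Ker\chi_j\}$. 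Because $G$ is active, the characters $\chi_1,\dots,\chi_k$ are linearly independent, so $\bigcap_j \Ker\chi_j$ has dimension $\dim T - k$, and therefore $\dim\TT = 2\dim T - (\dim T - k) = \dim T + k = \dim G$. Thus $\TT$ is a torus acting on the irreducible affine variety $X$ with an orbit of full dimension, namely (the image of) $G$ itself.

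The second step is to upgrade this to the statement that $X$ is toric, i.e. that the $\TT$-action is faithful and has a dense orbit. Density is immediate: $G$ is Zariski-open in $X$, and already $\theta(T\times T)\cdot 1 = G$ since any $\ol\alpha\cdot t = \theta(\cdot,\cdot)(1)$ can be obtained by an appropriate $(t_1,t_2)$ — here one uses formula~\eqref{1eq:act}, which shows the $\TT$-orbit of the identity is $\{\ol\chi(t_1)\ol\alpha' \cdot t_1 t t_2^{-1}\}$ and hence all of $G$ by linear independence of the $\chi_j$. Faithfulness follows because we have computed $\Ker\theta$ exactly, so $\theta$ descends to a faithful action of $\TT = (T\times T)/\Ker\theta$. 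Combined with $\dim\TT = \dim X$, this gives that $X$ is a toric variety for $\TT$.

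The third step is to check that $G$ is $\TT$-invariant as a subset of $X$. This is essentially automatic: $\TT$ acts on $G$ by left and right translations (this is how $\TT$ was defined), and the extension of this action from $G$ to $X$ is the one furnished by the monoid structure — for this extension one invokes \cite[Theorem 1]{Vin} or \cite[Proposition 1]{Ri1} as already cited in the text. Since $G$ is precisely the set of invertible elements of the monoid $X$, and left/right multiplication by invertible elements preserves invertibility, $\TT \cdot G = G$; more precisely the open orbit of $\TT$ in $X$ is exactly $G$, which is another way to see invariance.

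The main obstacle is the first step: pinning down $\Ker\theta$ precisely and thereby the dimension count. One must be careful that the argument from Section~\ref{sec:conhom} — where it was carried out for the group $\GG$ built from a compatible set of Demazure roots — applies verbatim to an abstract active $G$, which requires only Lemma~\ref{lmYZ} and the multiplication formula~\eqref{def:mult}, both available in this generality. Once the equality $\dim\TT = \dim G$ is in hand together with the explicit open orbit, the toric conclusion and the $\TT$-invariance of $G$ are formal.
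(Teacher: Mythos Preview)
The paper does not prove this proposition; it is quoted from \cite[Proposition~2]{YZ}. Your overall strategy---compute $\dim\TT$ via $\Ker\theta$, then exhibit a dense $\TT$-orbit---is the natural one, and the dimension count in the first step is correct. However, there is a genuine error in the second step.

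You claim the $\TT$-orbit of the identity $1 \in G$ is all of $G$. This is false: the identity corresponds to $\ol{0}\cdot e$, and formula~\eqref{1eq:act} gives $\theta(t_1,t_2)(\ol{0}\cdot e) = \ol\chi(t_1)\cdot\ol{0}\cdot t_1 e t_2^{-1} = \ol{0}\cdot t_1 t_2^{-1}$, so the orbit of the identity is only~$T$. Likewise, your parenthetical ``more precisely the open orbit of $\TT$ in $X$ is exactly $G$'' is incorrect: as Section~\ref{1sec:orb} shows, $G = X_\tau$ decomposes into several $\TT$-orbits indexed by the faces of~$\tau$, and the open one is $\{\ol\alpha\cdot t \mid \alpha_r \neq 0 \text{ for all }r\}$, strictly smaller than~$G$ when $k\ge 1$. (This does not contradict $\TT$-invariance of $G$: the point is that $G$ is a \emph{union} of $\TT$-orbits, not a single one.)

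The fix is simple: instead of the identity, take any $g = \ol\alpha\cdot t$ with all $\alpha_r \neq 0$. Then by~\eqref{1eq:act} the $\TT$-orbit of $g$ is $\{\ol\chi(t_1)\ol\alpha \cdot t_1 t t_2^{-1}\}$; linear independence of the $\chi_j$ makes $t_1\mapsto\ol\chi(t_1)$ surjective onto $(\KK^\times)^k$, so $\ol\chi(t_1)\ol\alpha$ sweeps out $(\KK^\times)^k$ while $t_1 t t_2^{-1}$ sweeps out~$T$, giving an open orbit. The stabilizer of such $g$ in $T\times T$ lands in $\Ker\theta$, so $\TT$ embeds, and $X$ is toric. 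Your argument for the $\TT$-invariance of $G$ (left/right multiplication by invertible elements preserves invertibility) is correct and stands independently of the orbit claim.
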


We now show that if the semidirect product is active, then the unipotent radical is commutative.

\begin{lemma}\label{thm:commute}
Let the semidirect product $U \leftthreetimes T$ be active. Then the group $U$ is commutative, and hence $U \cong \G_a^k$.
\end{lemma}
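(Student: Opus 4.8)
The plan is to work in the explicit linear coordinates on $U$ provided by Lemma~\ref{lmYZ} and to exploit that, for an active product, the characters $\chi_1,\dots,\chi_k$ are linearly independent. By Lemma~\ref{lmYZ} choose coordinates $\ol{\alpha}=(\alpha_1,\dots,\alpha_k)$ identifying the variety $U$ with $\AA^k$ so that $\psi(t)$ acts diagonally, $\psi(t)\colon\alpha_j\mapsto\chi_j(t)\alpha_j$. Write the group law of $U$ in these coordinates as a polynomial map $\ol{\alpha}\cdot\ol{\alpha}'=\bigl(\mu_1(\ol{\alpha},\ol{\alpha}'),\dots,\mu_k(\ol{\alpha},\ol{\alpha}')\bigr)$ with $\mu_i\in\KK[\alpha_1,\dots,\alpha_k,\alpha_1',\dots,\alpha_k']$. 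Note that the group structure of $U$ enters only through the $\mu_i$; Lemma~\ref{lmYZ} is used merely to put the $T$-action into diagonal form, and does not presuppose commutativity of $U$.

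Next I would use that each $\psi(t)$ is a \emph{group} automorphism of $U$, so $\psi(t)(\ol{\alpha}\cdot\ol{\alpha}')=\psi(t)(\ol{\alpha})\cdot\psi(t)(\ol{\alpha}')$. Reading off the $i$-th coordinate gives the scaling identity $\chi_i(t)\,\mu_i(\ol{\alpha},\ol{\alpha}')=\mu_i\bigl(\chi_1(t)\alpha_1,\dots,\chi_k(t)\alpha_k,\chi_1(t)\alpha_1',\dots,\chi_k(t)\alpha_k'\bigr)$ for all $t\in T$. Expanding $\mu_i=\sum_{\ol{a},\ol{b}}c_{\ol{a},\ol{b}}\,\ol{\alpha}^{\,\ol{a}}(\ol{\alpha}')^{\ol{b}}$ with $\ol{a},\ol{b}\in\ZZ_{\ge0}^k$ and comparing coefficients of each monomial, a nonzero $c_{\ol{a},\ol{b}}$ forces $\chi_i=\sum_{j=1}^k(a_j+b_j)\chi_j$ in the character lattice of $T$.

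Here the activeness hypothesis enters: since $\chi_1,\dots,\chi_k$ are linearly independent over $\ZZ$ (equivalently, $U\leftthreetimes T$ is active), the relation $\chi_i=\sum_j(a_j+b_j)\chi_j$ with $a_j+b_j\ge0$ forces $a_j+b_j=\delta_{ij}$ for all $j$; in particular $a_j=b_j=0$ for $j\ne i$ and $(a_i,b_i)\in\{(1,0),(0,1)\}$, and no constant monomial can occur since $\chi_i\ne0$. Hence $\mu_i=s_i\alpha_i+t_i\alpha_i'$ for scalars $s_i,t_i\in\KK$. Imposing the neutral element axioms — if $e=(e_1,\dots,e_k)$ is the identity of $U$, then $\mu_i(\ol{\alpha},e)=\alpha_i$ and $\mu_i(e,\ol{\alpha}')=\alpha_i'$ identically in $\ol{\alpha},\ol{\alpha}'$ — yields $s_i=t_i=1$ and $e_i=0$. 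Therefore $\mu_i(\ol{\alpha},\ol{\alpha}')=\alpha_i+\alpha_i'$, so $U\cong\G_a^k$ with coordinatewise addition, which is commutative.

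The computation is elementary once the diagonalizing coordinates are fixed, so the only point requiring genuine care — and the step I expect to be the crux — is the passage from the scaling identity to the monomial constraint $\chi_i=\sum_j(a_j+b_j)\chi_j$ together with its resolution: one must invoke that "active" is precisely linear independence of the $\chi_j$ (as recorded right after the definition of active), and that linear independence in the torsion-free character group rules out every monomial except $\alpha_i$ and $\alpha_i'$, including any constant term. Everything after that is forced by the neutral-element axioms.
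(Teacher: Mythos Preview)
Your proof is correct and takes a genuinely different route from the paper's. The paper argues at the Lie algebra level: it passes to the adjoint representation $\Ad\colon T\to\Aut(\mathfrak{u})$, diagonalizes it via Lemma~\ref{lmYZ} so that $\Ad(t)e_i=\chi_i(t)e_i$, and then observes that $[e_i,e_j]$ is a weight vector of weight $\chi_i+\chi_j$, which by linear independence of the $\chi_r$ cannot be any $\chi_r$ nor zero, forcing $[e_i,e_j]=0$; commutativity of $\mathfrak{u}$ then gives commutativity of $U$ in characteristic zero. Your approach instead works directly with the polynomial group law on $U$, using that $\psi(t)$ is a group automorphism to derive the scaling identity on each $\mu_i$, and then resolves the resulting character relation using independence of the $\chi_j$. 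Your argument is slightly more elementary in that it avoids Lie theory and the passage from $\mathfrak{u}$ abelian to $U$ abelian; it also yields the explicit group law $\mu_i(\ol{\alpha},\ol{\alpha}')=\alpha_i+\alpha_i'$ directly, so the isomorphism $U\cong\G_a^k$ is immediate rather than invoked as a separate fact about commutative unipotent groups. The paper's argument is shorter and more conceptual, isolating the key weight computation in one line.
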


\begin{proof}
Let the structure of the semidirect product ${G = U \leftthreetimes T}$ be defined by a homomorphism $\psi\colon T \rightarrow \Aut(U)$, where $t \mapsto t u t^{-1} = \phi_u$, which in turn induces a map $\mathrm{Ad}\colon T \rightarrow \Aut(\mathfrak{u}) \subseteq \GL(\mathfrak{u})$, $t \mapsto d \phi_u$, where $\mathfrak{u} = \mathrm{T}_e U$ is the tangent Lie algebra.

Choose a basis $e_1, \ldots, e_k$ in the algebra $\mathfrak{u}$ in such a way that the torus acts diagonally: $\Ad(t) e_i = \chi_i(t) e_i$. Note that since all characters are linearly independent, the only eigenvectors are those proportional to the basis elements. To prove commutativity of the group $U$, it suffices to check commutativity of the Lie algebra $\mathfrak{u}$, i.e. it suffices to show that $[e_1, e_2] = 0$. Observe that
\[\Ad(t) [e_1, e_2] = [\Ad(t)e_1, \Ad(t) e_2] = [\chi_1(t)e_1, \chi_2(t)e_2] = \chi_1(t) \chi_2(t) [e_1, e_2] = (\chi_1 + \chi_2)(t) [e_1, e_2].\]
Thus $[e_1, e_2]$ is an eigenvector. But since all characters are linearly independent, the character $\chi_1 + \chi_2$ cannot be zero or coincide with one of $\chi_1, \ldots, \chi_k$. Therefore, $[e_1, e_2] = 0$.
\end{proof}
	
Let us formulate one of the main results of this work.

\begin{theorem}\label{thm:act}
	Any affine algebraic monoid with an active group of invertible elements is isomorphic to a root monoid corresponding to some cone~$\sigma$, its $k$-dimensional regular face~$\tau$, and a set of Demazure roots $\{e_1^{(r)}, e_2^{(r)} \mid r = 1, \ldots, k\}$ compatible with $\tau$. Moreover, the group of invertible elements of the root monoid is active if and only if the differences $e_2^{(r)} - e_1^{(r)}$ are linearly independent.
\end{theorem}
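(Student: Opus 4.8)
The plan is to prove the two assertions separately, reusing the machinery already set up.
For the second (easier) assertion, recall that by Lemma~\ref{lmYZ} and the paragraph following the definition of active product, the group $\GG = \G_a^k \leftthreetimes T$ is active precisely when the characters $\chi_1, \ldots, \chi_k$ defining $\psi$ are linearly independent. By the construction in Theorem~\ref{thm:mon} we have $\chi_r = \chi^{e_2^{(r)} - e_1^{(r)}}$, and characters of a torus are linearly independent (as elements of the group of characters, written additively) exactly when the corresponding lattice vectors $e_2^{(r)} - e_1^{(r)} \in \tau^\perp \cap M$ are linearly independent over $\ZZ$, equivalently over $\QQ$. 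This gives the ``moreover'' part immediately.

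For the first assertion, let $X$ be an affine monoid with active group of invertible elements $G$. By Lemma~\ref{thm:commute} the unipotent radical $U$ is commutative, so $G \cong \G_a^k \leftthreetimes T$ for some torus $T$ and characters $\chi_1, \ldots, \chi_k$ of $T$, i.e.\ $G \cong \GG$ in the notation of Section~\ref{aff}. By Proposition~\ref{pr:toric}, $X$ is a toric variety for the torus $\TT = \Im\theta$, with $G$ an open $\TT$-invariant subset; write $X = X_\sigma$ for the corresponding cone $\sigma$ in the lattice $N$ with dual $M$, so $\KK[X_\sigma] = \bigoplus_{u \in S_\sigma}\KK\chi^u$. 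The open subset $G \subset X_\sigma$ is itself a toric variety for $\TT$, hence equals $X_\tau$ for a face $\tau \subseteq \sigma$; since $G \cong \G_a^k \times T$ as a variety, $\tau$ is a regular $k$-dimensional face, and $X_0 = O_\tau \cong T$. The analysis of Section~\ref{sec:conhom}--\ref{1sec:orb}, carried out there for $\GG$ viewed intrinsically, now applies verbatim to the embedding $X_\tau \subseteq X_\sigma$: the coordinate functions $\Alp{r}, \Bet{r}$ on $G$ are homogeneous for the $\TT$-grading, so $\Alp{r} = \chi^{a_r}$ and $\Bet{r} = \chi^{b_r}$ for uniquely determined $a_r, b_r \in M$ (after rescaling), and one gets $\langle p_s, a_r\rangle = \langle p_s, b_r\rangle = \delta_{rs}$ for the primitive generators $p_1,\ldots,p_k$ of the rays of $\tau$, together with $a_r - b_r \in \tau^\perp$ recording $\chi_r = \chi^{a_r - b_r}$.

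Now set $e_1^{(r)} := -a_r$ and $e_2^{(r)} := -b_r$. The relations $\langle p_s, a_r\rangle = \langle p_s, b_r\rangle = \delta_{rs}$ translate into $\langle p_s, e_1^{(r)}\rangle = \langle p_s, e_2^{(r)}\rangle = -\delta_{rs}$; it remains to check that $e_1^{(r)}$ and $e_2^{(r)}$ are actually Demazure roots of $\sigma$, i.e.\ that they pair nonnegatively with every primitive generator $q$ of a ray of $\sigma$ not in $\tau$. Since $\Alp{r}, \Bet{r}$ are \emph{regular} functions on the whole monoid $X_\sigma$ (the coordinates $\alpha_r, \beta_r$ extend from $G$ to $X_\sigma$ because the comultiplication, equivalently the multiplication morphism $X \times X \to X$, is defined on all of $X$ --- one shows $a_r, b_r \in S_\sigma$ directly from the fact that $\Alp{r}, \Bet{r} \in \KK[X_\sigma]$), we have $a_r, b_r \in \sigma^\vee \cap M$, hence $\langle q, e_1^{(r)}\rangle, \langle q, e_2^{(r)}\rangle \le 0$; combined with $\langle q, e_i^{(r)}\rangle \ge -1$ forced by Demazure's local-triviality criterion --- more directly, by comparing with the known comultiplication on $G$: the comultiplication on $\KK[X_\sigma]$ restricts to formula~\eqref{1l:comu} on $\KK[G]$, and expanding $\chi^u$ for $u$ with $\langle q, u\rangle > 0$ one sees that $u + e_i^{(r)}$ must lie in $S_\sigma$, which forces $\langle q, e_i^{(r)}\rangle \ge -1$, and then nonnegativity since the only ray on which $e_i^{(r)}$ is allowed to be negative is $\rho_r$. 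Thus $\{e_1^{(r)}, e_2^{(r)}\}$ is a set of Demazure roots compatible with $\tau$, and by Theorem~\ref{thm:mon} it produces a root monoid on $X_\sigma$ whose comultiplication restricts on $\KK[G]$ to~\eqref{1l:comu}; since a monoid structure on an irreducible variety is determined by its restriction to the dense open subgroup $G$, the original monoid $X$ coincides with this root monoid.

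\textbf{Main obstacle.} The delicate point is the last step in the first assertion: verifying that the vectors $e_i^{(r)}$ extracted from the coordinate functions are genuine Demazure roots of $\sigma$ (not merely of $\tau$), i.e.\ controlling their pairings $\langle q, e_i^{(r)}\rangle$ with the rays $q$ of $\sigma$ outside $\tau$. One direction ($\le 0$) follows from regularity of $\Alp{r}, \Bet{r}$ on all of $X_\sigma$; the bound $\ge -1$ requires either invoking the local structure of the $\G_a$-actions $\exp(\alpha\,\partial/\partial\alpha_r)$ on $X_\sigma$ (which must be well-defined morphisms since they arise from multiplication by $\ol\alpha \in U \subset G$ inside the monoid), or arguing directly that $S_\sigma + \ZZ_{\ge 0}e_i^{(r)}$ must still lie in $\sigma^\vee$ because the relevant sums $\chi^{u + \ol i \cdot \ol{e}}$ appear as genuine monomials in $\KK[X_\sigma]$ when one writes out the (globally defined) comultiplication on a generating set of $S_\sigma$. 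Care is also needed to see that $X_\tau \subseteq X_\sigma$ is exactly the group of invertible elements --- but this is handled by the same computation as in the proof of Theorem~\ref{thm:mon} (taking $u$ in the relative interior of $\tau^\perp \cap \sigma^\vee$), applied in reverse.
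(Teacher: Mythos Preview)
Your overall architecture matches the paper's: reduce to $G\cong\GG$ via Lemma~\ref{thm:commute}, realize $X=X_\sigma$ and $G=X_\tau$ via Proposition~\ref{pr:toric}, set $e_1^{(r)}=-a_r$, $e_2^{(r)}=-b_r$, and identify the comultiplication with~\eqref{thm:finalcom}. The ``moreover'' clause is handled correctly.

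The genuine gap is precisely where you flag it, but your proposed fix is wrong. You assert that $\Alp{r},\Bet{r}$ extend to regular functions on $X_\sigma$, i.e.\ $a_r,b_r\in\sigma^\vee$, and deduce $\langle q,e_i^{(r)}\rangle\le 0$ for rays $q$ of $\sigma$ outside~$\tau$. This is false in general and has the sign backwards: a Demazure root $e\in\mathfrak{R}_r$ satisfies $\langle q,e\rangle\ge 0$, so one needs $\langle q,a_r\rangle\le 0$, meaning $a_r\notin\sigma^\vee$ whenever the pairing is strictly positive. Already in the affine-space example of Section~\ref{sec:ex} one has $a_r=-e_1^{(r)}=(0,\ldots,1,\ldots,0,-a_{k+1}^{(r)},\ldots,-a_n^{(r)})$, which lies in $\tau^\vee$ but not in $\sigma^\vee$ unless all $a_j^{(r)}=0$. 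So the coordinate functions $\Alp{r},\Bet{r}$ typically do \emph{not} extend to $X_\sigma$, and your chain of inequalities collapses. The subsequent appeal to ``nonnegativity since the only ray on which $e_i^{(r)}$ is allowed to be negative is $\rho_r$'' is then circular: that is exactly the Demazure-root condition you are trying to prove.

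You do brush past the right idea in your obstacle paragraph: the $\G_a$-subgroups of $U$ act on $X_\sigma$ by left and right monoid multiplication, and these actions are normalized by~$\TT$. The paper makes this the heart of the argument. The derivations $\delta_r=\partial/\partial\Alp{r}$ and $\partial_r=\partial/\partial\Bet{r}$ on $\KK[\GG]$ preserve the subalgebra $\KK[X_\sigma]$ because the underlying $\G_a$-actions extend to all of $X_\sigma$; they are homogeneous LNDs of degrees $e_1^{(r)}$ and $e_2^{(r)}$, and by the Demazure correspondence recalled in Section~\ref{root} this \emph{is} the statement that $e_1^{(r)},e_2^{(r)}$ are Demazure roots of~$\sigma$. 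No separate inequality-chasing is needed. Replacing your regularity-of-$\Alp{r}$ argument with this LND argument closes the gap.
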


\begin{proof}
	Let $X$ be an affine variety endowed with a monoid structure, and suppose that the group of invertible elements $G = U \leftthreetimes T$ is active. Then $U$ is commutative by Lemma~\ref{thm:commute}, and $G \cong \GG$ according to the notation of Section~\ref{aff}. By Proposition~\ref{pr:toric}, $X$ is an affine toric variety corresponding to a cone $\sigma$, and $\GG$ is also a toric variety $X_\tau$ with respect to the action of the same torus $\TT$, see subsection~\ref{sec:conhom}. Since the embedding $\GG = X_\tau \hookrightarrow X = X_\sigma$ restricts to the identity map on the open torus orbit, the embedding $X_\tau \hookrightarrow X_\sigma$ is an open embedding of toric varieties corresponding to the embedding $S_\sigma \hookrightarrow S_\tau$. Hence, $\tau$ is a face of the cone $\sigma$. 
	
	Note that since the group of invertible elements $\GG$ is an open subset in $X$, the algebra of regular functions $\KK[X]$ is a subalgebra of $\KK[\GG]$. Therefore, the comultiplication on $\KK[X]$ is the restriction of the comultiplication ${\KK[\GG] \rightarrow \KK[\GG] \otimes \KK[\GG]}$ corresponding to multiplication in the group $\GG$ (given by formula~\eqref{1l:comu}).
	
	Using the notation of formula~\eqref{1l:comu}, consider $e_1^{(r)} = -a_r$ and $e_2^{(r)} = -b_r$ for $r = 1, \ldots, k$. We will show that $e_1^{(r)}$ and $e_2^{(r)}$ are Demazure roots of the cone~$\sigma$.
	
	Consider two $\G_a$-actions corresponding to the subgroup along the $r$-th coordinate \linebreak[4] ${\G_a \subset \G_a^k \leftthreetimes T \cong \GG}$:
    \begin{equation}
   		\begin{gathered}
    		\G_a \times \GG \rightarrow \GG, \; (\alpha_r',\, \ol{\alpha}\cdot t) \mapsto (\ol{\alpha_r'} + \ol{\alpha})\cdot t, \text{ where } \ol{\alpha_r'} = (0, \ldots, \alpha_r', \ldots, 0); \\
    		\GG \times \G_a \rightarrow \GG, \;   (t \cdot\ol{\beta},\, \beta_r') \mapsto t \cdot (\ol{\beta} + \ol{\beta_r'}) , 
    		\text{ where } \ol{\beta_r'} = (0, \ldots, \beta_r', \ldots, 0).
   		\end{gathered}
   	\end{equation}

	Take the corresponding LNDs on the algebra \[\KK[\GG] = \KK[T][\Alp{1}]\ldots[\Alp{k}] = \KK[T][\Bet{1}]\ldots[\Bet{k}],\] namely, differentiate the dual $\G_a$-actions on $\KK[\GG]$ at $\alpha_r' = 0$ and $\beta_r' = 0$, respectively:
	\begin{equation*}
    	\delta_{r} = \frac{d}{d \Alp{r}},  \partial_{r} = \frac{d}{d \Bet{r}}.
    \end{equation*}

	Since left and right multiplication on $\GG$ extends to multiplication on the whole $X_\sigma$, the algebra $\KK[X_\sigma]$ is invariant under the actions of $\GG$ by left and right multiplication, and $\KK[X_\sigma]$ is invariant under the derivations $\delta_{r}, \partial_{r}$. Moreover, $\delta_{r}, \partial_{r}$ are homogeneous LNDs with respect to the grading $\KK[X_\sigma] = \bigoplus_{u \in S_\sigma} \KK \chi^u$ and of degrees $e_1^{(r)}, e_2^{(r)}$, since $\Alp{r}^{-1} = \chi^{e_1^{(r)}}, \Bet{r}^{-1} = \chi^{e_2^{(r)}}$. Hence, $e_1^{(r)}$ and $e_2^{(r)}$ are Demazure roots. Since ${\langle p_s, e_1^{(r)} \rangle = \langle p_s, e_2^{(r)} \rangle = -\delta_{rs}}$, these Demazure roots are compatible with the face $\tau$.
	
	Substituting $\Alp{r}^{-1} = \chi^{e_1^{(r)}}, \Bet{r}^{-1} = \chi^{e_2^{(r)}}$ into formula~\eqref{1l:comu}, we obtain 
	\begin{equation}\label{form}
		\chi^u \mapsto \chi^u \otimes  \chi^u  \prod_{r = 1}^{k} (1 \otimes \chi^{e_1^{(r)}}  + \chi^{e_2^{(r)}} \otimes 1)^{\langle p_r, u\rangle},
	\end{equation}
	which coincides with the formula for comultiplication from Theorem~\ref{thm:mon}.
\end{proof}

\begin{remark}
	In accordance with Theorem~\ref{thm:act} and Proposition~\ref{prop:exist}, for any active group $G$ with a $k$-dimensional unipotent radical and any cone $\sigma$ with a $k$-dimensional regular face, there exists a monoid structure on $X_\sigma$ with group of invertible elements isomorphic to~$G$.
\end{remark}

\section{Examples}\label{sec:ex}
\subsection{Affine space}

In this section we describe all root monoid structures on the affine space $\mathbb{A}^n$.

Note that affine space is an affine toric variety $\mathbb{A}^n \cong X_\sigma$, where the primitive vectors on the rays of the cone $\sigma$ form a basis of the lattice. Consider a $k$-dimensional face $\tau \subset \sigma$. Observe that all such faces are equivalent up to isomorphism, hence we may assume $\tau = \cone(p_1, \ldots, p_k)$, where $p_r = (0, \ldots, 0, \overset{r}{1}, 0, \ldots, 0)$. For all $r = 1, \ldots, k$ choose pairs of Demazure roots corresponding to the ray $\rho_r$ as follows:
\begin{gather*}
e_1^{(r)} =  (\underbrace{0, \ldots, 0, \overset{r}{-1}, 0, \ldots, 0}_{k}, a_{k+1}^{(r)}, \ldots, a_n^{(r)}),
\\
e_2^{(r)} =  (\underbrace{0, \ldots, 0, \overset{r}{-1}, 0, \ldots, 0}_{k}, b_{k+1}^{(r)}, \ldots, b_n^{(r)}),
\end{gather*}
where $a_{k+1}^{(r)}, \ldots, a_n^{(r)}, b_{k+1}^{(r)}, \ldots, b_n^{(r)} \in \ZZ_{\geq 0}$. Note that all sets of Demazure roots compatible with~$\tau$ are of this form. Now apply formula~\eqref{thm:finalcom} to the basis vectors $u = u_i$ corresponding to the coordinates $x_i$ on the affine space $\mathbb{A}^n$, where $i = 1, \ldots, n$. Denote by $\ol{a}_r$ the tuple $(a_{k+1}^{(r)}, \ldots, a_n^{(r)})$, and by $\ol{b}_r$ the tuple $(b_{k+1}^{(r)}, \ldots, b_n^{(r)})$, respectively. In addition, denote the monomial $x^{\ol{b}_r} = x_{k+1}^{b_{k+1}^{(r)}}\ldots x_{n}^{b_{n}^{(r)}}$, and similarly, $y^{\ol{a}_r} = y_{k+1}^{a_{k+1}^{(r)}}\ldots y_{n}^{a_{n}^{(r)}}$.
	
For $i = k+1, \ldots, n$ we obtain $\chi^{u_i}(x*y) = \chi^{u_i}(x) \chi^{u_i}(y) = x_iy_i$, since in this case $\langle p_r, u \rangle = 0$. If $i = 1, \ldots, k$, we obtain: $$\chi^{u_i}(x*y) = \chi^{u_i}(x) \chi^{u_i}(y) (\chi^{e_1^{(i)}}(y) + \chi^{e_2^{(i)}}(x) ) = x_iy_i(y_i^{-1}y^{\ol{a}_i} + x_i^{-1}x^{\ol{b}_i}) = x_iy^{\ol{a}_i} + y_ix^{\ol{b}_i}.$$
That is,
$$x*y = (x_1y^{\ol{a}_1} + y_1x^{\ol{b}_1}, \ldots, x_ky^{\ol{a}_k} + y_kx^{\ol{b}_k}, x_{k+1} y_{k+1}, \ldots, x_ny_n).$$

The monoid constructed above is active if and only if the differences ${\ol{b}_1 - \ol{a}_1}, \ldots, \ol{b}_k - \ol{a}_k$ are linearly independent. For example, the multiplication
\[(x_1, x_2, x_3, x_4)*(y_1, y_2, y_3, y_4) = (x_1 + y_1x_3x_4^2, x_2y_3 + y_2x_3^3x_4^4, x_3y_3, x_4y_4)\]
defines a non-active root monoid on $\AA^4$, whereas the multiplication
\[(x_1, x_2, x_3, x_4)*(y_1, y_2, y_3, y_4) = (x_1 + y_1x_3x_4, x_2y_3 + y_2x_3^3x_4^4, x_3y_3, x_4y_4)\]
determines an active (root) monoid on $\AA^4$.

\subsection{Cylinder over a quadratic cone} \label{example2}
Consider the cone $\sigma$ in the $4$-dimensional lattice $N$ with primitive vectors
\[p_1 = (1, 0, 0, 0), \, p_2 = (0, 1, 0, 0), \, p_3 = (0, 0, 1, 0), \, p_4 = (0, 1, 0, 1), \, p_5 = (1, 0, 0, 1).\] The dual cone $\sigma^\vee \subset M$ is defined by the vectors
\[q_1 = (1, 0, 0, 0), \, q_2 = (0, 1, 0, 0), \, q_3 = (0, 0, 1, 0), \, q_4 = (0, 0, 0, 1), \, q_5 = (1, 1, 0, -1),\] which also generate the semigroup $S_\sigma = \sigma^\vee \cap M$. Consider the functions $x_i = \chi^{q_i}$. The algebra of regular functions $\KK[X]$ of the corresponding toric variety $X$ is isomorphic to $\KK[x_1, x_2, x_3, x_4, x_5]$ with the relation $x_1x_2 = x_4x_5$. Hence, $$X = \{ x_1x_2 = x_4x_5 \} \subseteq \mathbb{A}^5,$$ that is, $X$ is the direct product of the quadratic cone and the affine line.

Consider the two-dimensional regular cone $\tau = \cone(p_1, p_2)$. Choose Demazure roots compatible with the cone $\tau$ (two for each primitive vector $p_1, p_2$):
\begin{equation*}
	\begin{gathered}
		e_1^{(1)} = (-1, 0, a_1, b_1), \text{ where } a_1 \geq 0, b_1 > 0,\\
		e_2^{(1)} = (-1, 0, a_2, b_2), \text{ where } a_2 \geq 0, b_2 > 0,\\
		e_1^{(2)} = (0, -1, c_1, d_1), \text{ where } c_1 \geq 0, d_1 > 0,\\
		e_2^{(2)} = (0, -1, c_2, d_2), \text{ where } c_2 \geq 0, d_2 > 0 .
	\end{gathered}
\end{equation*}
With this choice of Demazure roots, formula~\eqref{thm:finalcom} takes the form: $$\chi^u \mapsto \chi^u \otimes  \chi^u  (1 \otimes \chi^{e_1^{(1)}}  + \chi^{e_2^{(1)}} \otimes 1)^{\langle p_1, u\rangle} (1 \otimes \chi^{e_1^{(2)}}  + \chi^{e_2^{(2)}} \otimes 1)^{\langle p_2, u\rangle}. $$ 
The multiplication on $X$ is given by the formula
\begin{multline}\label{eq_mult_example2}
	x*y = \bigl(x_1 y_3^{a_1}y_4^{b_1} + y_1 x_3^{a_2}x_4^{b_2}, \;\; x_2 y_3^{c_1}y_4^{d_1} + y_2 x_3^{c_2}x_4^{d_2}, 
	\\ x_3 y_3, \;\; x_4 y_4, \;\;
	x_5y_3^{a_1 + c_1}y_4^{b_1 + d_1 - 1} 
	+ y_5x_3^{a_2 + c_2}x_4^{b_2 + d_2 - 1} +
	\\
	+ x_1 x_3^{c_2}x_4^{d_2 - 1}y_2 y_3^{a_1}y_4^{b_1 - 1} 
	+ y_1 y_3^{c_1}y_4^{d_1 - 1}x_2 x_3^{a_2}x_4^{b_2 - 1}\bigr).
\end{multline}
The corresponding monoid is active if and only if two vectors 
$(a_1 - a_2, b_1 - b_2)$ and ${(c_1 - c_2, d_1 - d_2)}$ are linearly independent.

\section{Idempotents and torus orbits}\label{sec:idemp} 

Recall that an element $x \in X$ is called an \textit{idempotent} if $x * x = x$. Idempotents in algebraic monoids have been extensively studied in various works, see, for example,~\cite{Br-1, Br-2, Neeb, Re} and the references therein. In particular, it is known that the subvariety of idempotents is smooth and is an orbit of the group of invertible elements under the action by conjugation, see~\cite[Theorem~1.1]{Br-2}. In this section we describe all idempotents in each torus orbit for root monoids. 

\begin{theorem}\label{idem_thm_1}
	In the notation of Theorem~\ref{thm:mon}, let $X_\sigma$ be a root monoid, $\tau$ the corresponding $k$-dimensional regular face of the cone $\sigma$ with primitive vectors $p_1, \ldots, p_k$ on the one-dimensional faces. Let $\gamma$ be a face of the cone $\sigma$. Denote by $E_\gamma$ the set of idempotents lying in the orbit $O_\gamma$. Then:
	\begin{enumerate}
		\item if $\tau \subseteq \gamma$, then $E_\gamma = \{x_\gamma \}$; 
		\item if there exists a ray $p_r \notin \gamma$ such that either corresponding Demazure roots $e_1^{(r)}, e_2^{(r)} \notin \gamma^\perp$, or $e_1^{(r)}, e_2^{(r)} \in \gamma^\perp$, then $E_\gamma = \varnothing$;
		\item if for every such ray $p_r \notin \gamma$ exactly one of the corresponding Demazure roots $e_1^{(r)}, e_2^{(r)}$ lies in $\gamma^\perp$, then \[E_\gamma = O_\gamma \cap \{\chi^u = 1\;\; \forall u \in \cone(\tau, \gamma)^\perp \cap S_\sigma\}.\]
	\end{enumerate}
\end{theorem}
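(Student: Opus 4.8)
The plan is to work entirely with the description of points of $X_\sigma$ as semigroup homomorphisms $S_\sigma \to \KK$ and with the explicit multiplication formula~\eqref{1fin_mult}. First I would write down what it means for a point $x$ in the orbit $O_\gamma$ to satisfy $x*x = x$. By~\eqref{tor:fun_orb2}, a point $x \in O_\gamma$ has $\chi^u(x) \neq 0$ precisely when $u \in \gamma^\perp$, and $\chi^u(x) = 0$ otherwise. Applying the comultiplication,
\[
\chi^u(x*x) = \sum_{\ol{i}+\ol{j} = \langle \ol{p},u\rangle} {\langle \ol{p},u\rangle \choose \ol{i}}\, \chi^{u + \ol{i}\cdot\ol{e_2}}(x)\,\chi^{u + \ol{j}\cdot\ol{e_1}}(x),
\]
and I would split the analysis according to whether $u \in \gamma^\perp$ or not. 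For $u \notin \gamma^\perp$ we need the right-hand side to vanish; for $u \in \gamma^\perp$ we need it to equal $\chi^u(x) \neq 0$. The key observation is that, because $\langle p_r, u\rangle$ counts the exponent in the $r$-th factor, a term in the sum is nonzero only if every shifted character $u + \ol{i}\cdot\ol{e_2}$ and $u + \ol{j}\cdot\ol{e_1}$ lands in $\gamma^\perp$; pairing with the rays of $\gamma$ and with $p_1,\dots,p_k$ controls which tuples $\ol i,\ol j$ can contribute.

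For item (1), if $\tau \subseteq \gamma$ then $p_r \in \gamma$ for all $r$, so $\langle p_r, u\rangle = 0$ for $u \in \gamma^\perp$, the product collapses, and $x*x = x$ reduces to $\chi^u(x)^2 = \chi^u(x)$ for all $u$; combined with $\chi^{u+u'}=\chi^u\chi^{u'}$ this forces $\chi^u(x) \in \{0,1\}$ with value $1$ exactly on $\gamma^\perp$, i.e. $x = x_\gamma$. For item (2), I would argue by producing a single character $u$ on which the idempotent equation fails. Pick a ray $p_r \notin \gamma$. If both $e_1^{(r)}, e_2^{(r)} \notin \gamma^\perp$, take a $u$ with $\langle p_r, u\rangle > 0$ and $u, e_1^{(s)}, e_2^{(s)}$ behaving generically enough (using that $\langle p_r, e_1^{(r)}\rangle = \langle p_r, e_2^{(r)}\rangle = -1$, the shift can bring a character into $\gamma^\perp$ only for the extreme tuples, and then both factors $\chi^{e_1^{(r)}}$, $\chi^{e_2^{(r)}}$ vanish at $x$, killing every surviving term) — so $\chi^u(x*x) = 0$ while $\chi^u(x)$ can be chosen nonzero, contradiction. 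If instead $e_1^{(r)}, e_2^{(r)} \in \gamma^\perp$, I choose $u \in \gamma^\perp$ with $\langle p_r, u\rangle = m > 0$: then $\chi^u(x*x) = \chi^u(x)^2\prod_r(\chi^{e_1^{(r)}}(x) + \chi^{e_2^{(r)}}(x))^{\langle p_r, u\rangle}$, and since $\chi^{e_1^{(r)}}(x), \chi^{e_2^{(r)}}(x) \in \KK^\times$ the $r$-th factor is generically not equal to $1$ over the orbit, and one shows it cannot be made identically $1$ consistently for all such $u$ — hence no point of $O_\gamma$ is idempotent.

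For item (3), suppose for every $p_r \notin \gamma$ exactly one of $e_1^{(r)}, e_2^{(r)}$ — say after renaming $e_1^{(r)}$ — lies in $\gamma^\perp$ while $e_2^{(r)} \notin \gamma^\perp$; for $p_r \in \gamma$ both lie in $\gamma^\perp$ and $\langle p_r, u\rangle = 0$ on $\gamma^\perp$. Now analyse $\chi^u(x*x)$ for $x \in O_\gamma$: a term indexed by $(\ol i, \ol j)$ survives only if $u + \ol i \cdot \ol{e_2} \in \gamma^\perp$ and $u + \ol j \cdot \ol{e_1} \in \gamma^\perp$. For $r$ with $p_r \notin \gamma$, since $e_2^{(r)} \notin \gamma^\perp$ and the shift in the $p_r$-pairing is $\langle p_r, u\rangle - i_r$, landing in $\gamma^\perp$ forces $i_r$ to a single value (so $j_r$ is determined too), while for $r$ with $p_r \in \gamma$, if $u \in \gamma^\perp$ then $\langle p_r, u\rangle = 0$ and there is nothing to choose; if $u \notin \gamma^\perp$ then one checks no surviving term exists and $\chi^u(x*x) = 0 = \chi^u(x)$, as required. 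Thus for $u \in \gamma^\perp$ exactly one term survives, and it equals $\chi^{u + \text{(shift)}}(x)\cdot\chi^{u - \text{(shift)}}(x)$ with the shift a linear combination of the $e_1^{(r)}$ for $p_r \notin \gamma$; writing $w_r$ for those $e_1^{(r)}$, the equation $x*x = x$ becomes $\chi^u(x)^2 \cdot \prod \chi^{w_r}(x)^{\langle p_r,u\rangle} = \chi^u(x)$, i.e. $\chi^u(x) = \prod \chi^{w_r}(x)^{-\langle p_r, u\rangle}$, which — after checking consistency and using that $w_r \in \gamma^\perp$ — is equivalent to $\chi^v(x) = 1$ for all $v$ in the sublattice/semigroup generated inside $\gamma^\perp$ that annihilate the rays $p_r \notin \gamma$, which is exactly $\cone(\tau,\gamma)^\perp \cap S_\sigma$. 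This gives the stated description of $E_\gamma$, and one sees it is nonempty (it is cut out inside $O_\gamma$ by characters that, restricted to $O_\gamma \cong \Hom(\gamma^\perp\cap M, \KK^\times)$, form a subtorus-defining system).

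The main obstacle I expect is the bookkeeping in item (3): carefully showing that for each $u \in \gamma^\perp$ exactly one index tuple $(\ol i, \ol j)$ contributes to $\chi^u(x*x)$, and then correctly identifying the resulting system of equations on $\chi^u(x)$ with vanishing of $\cone(\tau,\gamma)^\perp \cap S_\sigma$ — in particular checking that the binomial coefficient in front of the surviving term is $1$ (it is ${\langle p_r,u\rangle \choose \langle p_r,u\rangle}$ or ${\langle p_r,u\rangle \choose 0}$ in each factor) and that the relation is consistent across all $u$, i.e. genuinely defines a nonempty subset of $O_\gamma$. The nonemptiness claim reduces to the fact that $\cone(\tau,\gamma)^\perp \cap M$ is a saturated sublattice, so the corresponding characters define an honest subtorus coset of $O_\gamma$, which is the set $E_\gamma$.
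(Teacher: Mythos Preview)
Your overall architecture matches the paper's, but there are two genuine gaps.

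First, in case~(2) with $e_1^{(r)}, e_2^{(r)} \in \gamma^\perp$, the ``generically not equal to $1$'' argument does not rule out a single idempotent: for a fixed $x$ it may well happen that $\chi^{e_1^{(r)}}(x) + \chi^{e_2^{(r)}}(x)$ takes a convenient value. The paper instead produces a specific $u \in \gamma^\perp \cap S_\sigma$ with $\langle p_j, u\rangle = \delta_{jr}$; then for an idempotent $x$ one first gets $\chi^{u+e_l^{(r)}}(x) = 1$ (these lie in $\tau^\perp \cap \gamma^\perp$), and the idempotent equation for $u$ collapses to $\chi^u(x*x) = 2\chi^u(x)$, forcing $\chi^u(x)=0$, a contradiction. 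The existence of such a $u$ is not obvious: the paper proves it via an auxiliary lemma showing that $\cone(p_1,\dots,p_s,\gamma)$ is a \emph{face} of $\sigma$, and you will need this as well (it also underlies the meaning of $\cone(\tau,\gamma)^\perp$ in part~(3)).

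Second, in case~(3) you write that ``for $p_r \in \gamma$ both [roots] lie in $\gamma^\perp$'': this is false, since $\langle p_r, e_l^{(r)}\rangle = -1$ shows neither root lies in $\gamma^\perp$ when $p_r \in \gamma$. More importantly, your mechanism for forcing the surviving index is wrong: the constraint on $i_r$ does not come from the $p_r$-pairing (indeed $p_r \notin \gamma$, so $\gamma^\perp$ imposes nothing on $\langle p_r,\cdot\rangle$), but from a ray $q_r$ of $\gamma$ with $\langle q_r, e_2^{(r)}\rangle > 0$, which makes $\langle q_r, u + \ol{i}\cdot\ol{e_2}\rangle > 0$ whenever $i_r > 0$. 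With this correction, for $u \in \gamma^\perp$ one does get a single surviving term with $\ol{i}=\ol{0}$ (in your convention), and your conclusion follows. But the case $u \notin \gamma^\perp$ is not as immediate as you suggest: shifts by $e_1^{(r)}$ (for $r\le s$) preserve the $\gamma$-pairing, and for $r>s$ neither root is in $\gamma^\perp$, so one must argue separately depending on whether $u$ lies in $\cone(p_{s+1},\dots,p_k)^\perp$ or not; the paper handles these two subcases with a careful choice of an auxiliary $q' \in \gamma$.
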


\begin{proof}
	Note that $x \in X$ is an idempotent if and only if $\chi^u(x * x) = \chi^u(x)$ for every $u \in S_\sigma$.
	
	\bigskip
	
	(1) Suppose $\tau \subseteq \gamma$. Take $u \in \gamma^\perp \subseteq \tau^\perp$. Then $\langle p_r, u\rangle = 0$ for all $r = 1, \ldots, k$. By formula~\eqref{thm:finalcom} we have $\chi^u(x*x) = \chi^u(x)\cdot\chi^u(x)$. Hence $x$ can be an idempotent only if $\chi^u(x) = 1$ or $\chi^u(x) = 0$. Since $x \in O_\gamma, u \in \gamma^\perp$, by formula~\eqref{tor:fun_orb2} we know $\chi^u(x) \neq 0$, so $\chi^u(x) = 1$ and $x = x_\gamma$. Note that therefore there is at most one idempotent in the orbit $O_\gamma$. Let us check that $x_\gamma$ is indeed an idempotent. Now let $u \in \tau^\perp, u \notin \gamma^\perp$. Then $\chi^u(x_\gamma*x_\gamma) = \chi^u(x_\gamma)\cdot \chi^u(x_\gamma) = 0 \cdot 0 = 0 = \chi^u(x_\gamma)$. It remains to consider the case $u \notin \tau^\perp$. Recall that by Theorem~\ref{thm:mon}
	\begin{equation}\label{sum_form}
		\chi^u (x * y) = \sum \limits_{\ol{i} + \ol{j} = \langle \ol{p}, u \rangle } { \langle \ol{p}, u \rangle \choose \ol{i} } \chi^{u + \ol{i}\cdot \ol{e_2}}(x) \cdot  \chi^{u + \ol{j}\cdot \ol{e_1}} (y).
	\end{equation}
	
	Let us show that each summand in the right-hand side of~\eqref{sum_form} is equal to $0$. Recall that we denote $u + \ol{i}\cdot \ol{e_2} = u + i_1 e_2^{(1)} + \ldots + i_k e_2^{(k)}$. Then $\langle p_s, u + \ol{i}\cdot \ol{e_2} \rangle = \langle p_s, u\rangle - i_s$ for any $s = 1, \ldots, k$. Similarly $\langle p_s, u + \ol{j}\cdot \ol{e_1} \rangle = \langle p_s, u\rangle - j_s$. Since $\ol{i} + \ol{j} = \langle \ol{p}, u \rangle \neq \ol{0}$, at least one of $u + \ol{i}\cdot \ol{e_2}$ or $u + \ol{j}\cdot \ol{e_1}$ does not lie in $\tau^\perp$, and hence not in $\gamma^\perp$. Therefore $\chi^{u + \ol{i}\cdot \ol{e_2}}(x_\gamma) \cdot  \chi^{u + \ol{j}\cdot \ol{e_1}} (x_\gamma) = 0$. Then $\chi^u(x_\gamma*x_\gamma) = 0 = \chi^u(x_\gamma)$. Thus $x_\gamma$ is the unique idempotent in $O_\gamma$.
	
	\bigskip
	
	Let us divide case (2) into two subcases:\\
	(2.1) $p_r \notin \gamma$ and $e_1^{(r)}, e_2^{(r)} \notin \gamma^\perp$ for some $r$;\\
	(2.2) $p_r \notin \gamma$ and $e_1^{(r)}, e_2^{(r)} \in \gamma^\perp$ for some $r$, and there exists no $j$ such that $p_j \notin \gamma$ but ${e_1^{(j)}, e_2^{(j)} \notin \gamma^\perp}$.
	
	\medskip
	
	(2.1) Suppose $p_r \notin \gamma$ and $e_1^{(r)}, e_2^{(r)} \notin \gamma^\perp$. Then there exists a ray of the cone $\gamma$ with primitive vector $q$ such that $\langle q, e_1^{(r)} \rangle > 0$. Indeed, since $e_1^{(r)}\notin \gamma^\perp$, there exists $q \in \gamma$ with nonzero pairing. Since $e_1^{(r)}$ is a Demazure root, we have $\langle p_r, e_1^{(r)} \rangle = -1$, and with all other primitive vectors on rays of $\sigma$ the pairing is nonnegative, in particular $\langle q, e_1^{(r)} \rangle \geq 0$.  

	Next note that since $p_r$ is not a ray of $\gamma$, we have $\gamma^\perp \setminus p_r^\perp \neq \varnothing$. Take $u \in \gamma^\perp \setminus p_r^\perp$. Note that $i_r + j_r = \langle p_r, u \rangle \neq 0$, since $u \notin p_r^\perp$. Without loss of generality, assume $j_r \neq 0$. Then \[\langle q, u + \ol{j} \cdot \ol{e_1} \rangle = \langle q, u \rangle + j_1 \langle q, e_1^{(r)} \rangle + \ldots + j_k\langle q, e_1^{(r)} \rangle > 0.\] Indeed, $\langle q, u \rangle = 0$ since $q \in \gamma$, $u \in \gamma^\perp$, $j_r \langle q, e_1^{(r)} \rangle > 0$, and all other terms are nonnegative. Hence $u + \ol{j} \cdot \ol{e_1} \notin \gamma^\perp$. Thus for $x \in O_\gamma$, we get $\chi^{u + \ol{j} \cdot \ol{e_1}}(x)\cdot \chi^{u + \ol{i} \cdot \ol{e_2}}(x) = 0$. Then by~\eqref{sum_form}, $\chi^u(x*x) = 0$. On the other hand, $u \in \gamma^\perp$, whence $\chi^u(x) \neq 0$. Hence there are no idempotents in the orbit $O_\gamma$. 
	
	\bigskip
	
	For cases (2.2) and (3) we prove auxiliary lemmas. Recall that $\tau = \cone(p_1, \ldots, p_k)$; without loss of generality assume ${p_1, \ldots, p_s \notin \gamma}$, ${p_{s+1}, \ldots, p_k \in \gamma}$. 
	
	\begin{lemma}\label{lm:cone}
		Suppose that for each ray $p_r \notin \gamma$, where $r = 1, \ldots, s$, at least one of the corresponding Demazure roots $e_1^{(r)}$ or $e_2^{(r)}$ lies in $\gamma^\perp$. Then $\cone(p_1, \ldots, p_s, \gamma)$ is a face of the cone~$\sigma$.
	\end{lemma}
	
	\begin{proof}
	Denote by $e^{(r)}$ the Demazure root lying in $\gamma^\perp$, and by $\gamma_r$ the cone $\cone(p_1, \ldots, p_r, \gamma)$. We will prove by induction on $r$ that $\gamma_r$ is a face of $\sigma$. For $r = 0$ this is clear. Let $\gamma_r$ be a face of~$\sigma$, hence there exists $m_r \in M$ such that $\langle x, m_r\rangle = 0$ for all $x \in \gamma_r$, and $\langle x, m_r\rangle > 0$ for all other $y \in \sigma$. Take the Demazure root $e^{(r+1)} \in \gamma^\perp$ corresponding to the ray $p_{r+1}$. We will look for a similar vector for $\gamma_{r+1}$ of the form
	$$m_{r+1}:=m_r + \alpha e^{(r+1)}. $$ 
	Since the Demazure roots are compatible with the cone $\tau$, we have $\langle p_i,  e^{(r+1)} \rangle = -\delta_{i, r+1}$, where $\delta$ is the Kronecker symbol. Let $x \in \gamma_r$, that is, $x = x' + a_1 p_1 + \ldots + a_r p_r$, where ${x' \in \gamma}$. Since $e^{(r + 1)} \in \gamma^\perp$, we get $\langle x , m_{r+1} \rangle = \langle x , m_r \rangle + \langle x , \alpha e^{(r+1)} \rangle = 0$ for any $\alpha \in \ZZ$. For $x \notin \gamma_r$ we have $\langle x , m_{r+1} \rangle = \langle x , m_r \rangle  + \langle x , \alpha e^{(r+1)} \rangle > 0$, since the first term is strictly positive and the second is nonnegative. Next, note that $p_{r+1} \notin \gamma_r$, since $e^{(r+1)} \in \gamma_r^\perp$, but $e^{(r+1)} \notin p_{r+1}^\perp$. Therefore, $\langle p_{r+1} , m_r  \rangle > 0 $, and since $\langle p_{r+1} , e^{(r+1)} \rangle = -1$, we can choose $\alpha > 0$ such that $\langle p_{r+1} ,m_{r+1}\rangle = 0$. Now $\langle x, m_{r+1} \rangle = 0$ for all $x \in \cone(p_{r+1}, \gamma_r) = \gamma_{r+1}$, and $\langle x, m_{r+1} \rangle > 0$ for all other $x \in \sigma$. Hence $\gamma_{r+1}$ is also a face of the cone $\sigma$.
	\end{proof}

\begin{lemma}\label{lm:perpcone}
Suppose that for each of the rays $p_r \notin \gamma$, where $r = 1, \ldots, s$, at least one of the corresponding Demazure roots $e_1^{(r)}$ or $e_2^{(r)}$ lies in $\gamma^\perp$. Then for any $1 \leq r \leq s$ there exists $u \in \gamma^\perp \cap \sigma^\vee$ such that $\langle p_r, u \rangle = 1$, and $\langle p_j, u \rangle = 0$ for all $j \neq r,\; 1 \leq j \leq k$.
\end{lemma}

\begin{proof}
Let $e^{(r)} \in \gamma^\perp$ be the Demazure root corresponding to $p_r$. Note $\gamma^\perp \cap \{ \langle p_r, \cdot \rangle = 1\} \ni -e^{(r)}$. Take $w$ from the relative interior of the face $\gamma_s^\perp \cap \sigma^\vee$, where $\gamma_s = \cone(p_1, \ldots, p_s, \gamma) = \cone(\tau, \gamma)$ is a face of the cone $\sigma$ by Lemma~\ref{lm:cone}. Then $\langle p_j, w \rangle = 0$ and $\langle q, w \rangle > 0$ for the remaining primitive vectors $q$ on the rays of the cone $\sigma$ (not contained in $\gamma$). We will find $u$ in the form $u = -e^{(r)} + \alpha w$, where $\alpha \in \ZZ_{\geq 0}$. Note that $u \in \gamma^\perp$, and $\langle p_j, u \rangle = 0$ for $j \neq r$, $\langle p_r, u \rangle = 1$. Moreover, one can choose $\alpha$ in such a way that $\langle q, u \rangle \geq 0$ for the remaining primitive vectors $q$ on the rays of the cone $\sigma$, hence $u \in \sigma^\vee$.
\end{proof}

(2.2) Let $p_r \notin \gamma$, and $e_1^{(r)}, e_2^{(r)} \in \gamma^\perp$. Note that if there is no ray $p_j \notin \gamma$ for which both corresponding Demazure roots $e_1^{(j)}, e_2^{(j)} \notin \gamma^\perp$, then we may assume that for each of the rays $p_1, \ldots, p_s$, at least one of the corresponding Demazure roots $e_1^{(r)}$ or $e_2^{(r)}$ lies in $\gamma^\perp$. Therefore, we may apply Lemma~\ref{lm:perpcone}: there exists $u \in \gamma^\perp$ such that $\langle p_r, u \rangle = 1$, and $\langle p_j, u \rangle = 0$ for all $j \neq r$. Note that then $u + e_l^{(r)} \in \gamma^\perp \cap \tau^\perp$, $l = 1, 2$. Consider $x \in O_\gamma$, and suppose that $x$ is an idempotent. Let us prove that $\chi^{u + e_l^{(r)}}(x) = 1$. Indeed, by formula~\eqref{thm:finalcom} we have $\chi^{u + e_l^{(r)}}(x*x) = \chi^{u + e_l^{(r)}}(x)\cdot\chi^{u + e_l^{(r)}}(x)$. Therefore, $x$ can be an idempotent only if ${\chi^{u + e_l^{(r)}}(x) = 1}$ or $\chi^{u + e_l^{(r)}}(x) = 0$. Since $x \in O_\gamma,\; u + e_l^{(r)} \in \gamma^\perp$, it follows that $\chi^{u + e_l^{(r)}}(x) \neq 0$, hence ${\chi^{u + e_l^{(r)}}(x) = 1}$. On the other hand, since $\langle p_r, u \rangle = 1$, and $\langle p_j, u \rangle = 0$ for $j \neq r$, by formula~\eqref{thm:finalcom} we obtain $\chi^u(x*x) = \chi^u(x)\chi^{u + e_1^{(r)}}(x) + \chi^{u + e_2^{(r)}}\chi^u(x) = \chi^u(x) + \chi^u(x)$. That is, $\chi^u(x*x) = \chi^u(x)$ only when $\chi^u(x) = 0$, which contradicts $x \in O_\gamma$. Hence, $E_\gamma = \varnothing$.

\bigskip

(3) Now consider the case when for each $p_1, \ldots, p_s \notin \gamma$ exactly one of the Demazure roots $e_1^{(r)}$ or $e_2^{(r)}$ lies in $\gamma^\perp$. Note that in the expansion of $\chi^u(x*x)$ by formula~\eqref{thm:finalcom} the result does not change if we swap $e_1^{(r)}$ and $e_2^{(r)}$. Therefore, we may assume that $e_2^{(r)} \in \gamma^\perp$ and $e_1^{(r)} \notin \gamma^\perp$ for all $r = 1, \ldots, s$. Note that then for each root $e_1^{(r)}$, where $r = 1, \ldots, s$, there exists a ray of the cone~$\gamma$ and a primitive vector $q_r$ on this ray such that $\langle q_r, e_1^{(r)} \rangle \neq 0$. Since $p_r \notin \gamma$ while $q_r \in \gamma$, we have $p_r \neq q_r$, hence $\langle q_r, e_1^{(r)} \rangle > 0$. Moreover, since the set of Demazure roots is compatible with the cone $\tau$, i.e., $\langle p_i, e_1^{(r)} \rangle = -\delta_{ri}$ for any $i = 1, \ldots, k$, the vector $q_r$ does not coincide with any of $p_1, \ldots, p_k$. Therefore, the pairing of $q_r$ with all Demazure roots $\{e_1^{(r)}, e_2^{(r)} \mid r = 1,\ldots,k\}$ is nonnegative.

Consider $u \in \sigma^\vee \setminus \gamma^\perp$. We will check that $\chi^u(x*x) = 0$ for all $x \in O_\gamma$. Since $u \notin \gamma^\perp$, we obtain $\chi^u(x) = 0 = \chi^u(x*x)$ for all $x \in O_\gamma$. Denote by $\tau_0$ the face $\cone(p_{s+1}, \ldots, p_k) \subseteq \gamma$ of the cone $\tau$.

Let $u \in \sigma^\vee \setminus \gamma^\perp$ and $u \in \tau_0^\perp$. Choose the primitive vector $q$ on a ray of the cone $\gamma$ such that $\langle q, u \rangle > 0$. Note that then $q \notin \tau_0$, and hence $q$ has a nonnegative pairing with all Demazure roots $\{e_1^{(r)}, e_2^{(r)} \mid r = s+1,\ldots,k\}$. It follows that $\langle q + c_1 q_1 + \ldots + c_s q_s, u \rangle > 0$ for any choice of $c_r \in \ZZ_{\geq 0}$. Choose coefficients $c_r$ in such a way that $\langle q + c_1 q_1 + \ldots + c_s q_s, e_1^{(r)} \rangle > 0$ for all $r = 1, \ldots, s$; this is possible since $\langle q_r, e_1^{(r)} \rangle > 0$. Set $q' := q + c_1 q_1 + \ldots + c_s q_s \in \gamma$. Then $\langle q', u + j_1 e_1^{(1)} + \ldots + j_k e_1^{(k)} \rangle > 0$. Indeed, $\langle q', u \rangle > 0$, and the other terms are nonnegative. Thus, $u + j_1 e_1^{(1)} + \ldots + j_k e_1^{(k)} = u + \ol{j} \ol{e_1} \notin \gamma^\perp$, and by formula~\eqref{thm:finalcom} we obtain $\chi^u(x*x) = 0$ for all $x \in O_\gamma$.

Now let $u \in \sigma^\vee \setminus \gamma^\perp$ and $u \notin \tau_0^\perp$. Let us proceed with the reasoning analogous to that in case~(1). Recall that $\langle p_r, u + \ol{i}\cdot \ol{e_2} \rangle = \langle p_r, u \rangle - i_r = j_r$ and $\langle p_r, u + \ol{j}\cdot \ol{e_1} \rangle = \langle p_r, u \rangle - j_r = i_r$ for any $r = 1, \ldots, k$. Note that since $u \notin \tau_0^\perp$, it follows that $i_r + j_r = \langle p_r, u \rangle \neq 0$ for some $r \geq s+1$, hence $u + \ol{i}\cdot \ol{e_2}$ or $u + \ol{j}\cdot \ol{e_1}$ does not lie in $\tau_0^\perp$, and therefore not in $\gamma^\perp$, hence $\chi^{u + \ol{i}\cdot \ol{e_2}}(x_\gamma) \cdot \chi^{u + \ol{j}\cdot \ol{e_1}}(x_\gamma) = 0$. By formula~\eqref{thm:finalcom} we obtain $\chi^u(x*x) = 0$ for all $x \in O_\gamma$.

Now consider $u \in \gamma^\perp$. Then $\langle p_{s+1}, u \rangle = \ldots = \langle p_{k}, u \rangle = 0$. Then $j_{s+1} = \ldots = j_k = 0$ in formula~\eqref{thm:finalcom}. Let us show that $u + \ol{j} \ol{e_1} \notin \gamma^\perp$ for $\ol{j} \neq \ol{0}$. Suppose $j_r > 0$, where $r \leq s$. Then $\langle q_r, u + \ol{j} \ol{e_1} \rangle > 0$. Indeed, $j_r \langle q_r, e_1^{(r)} \rangle > 0$, while the other terms are nonnegative. Also note that $u + \ol{i} \ol{e_2} \in \gamma^\perp$. After expanding $\chi^u(x*x) = \chi^u(x)$ by formula~\eqref{thm:finalcom}, only one nonzero term remains on the left-hand side, corresponding to $\ol{j} = \ol{0}$. That is, we obtain the equality $\chi^u(x)\chi^{u + \langle \ol{p}, u \rangle \ol{e_2}}(x) = \chi^u(x)$. Since $u \in \gamma^\perp$, we have $\chi^u(x) \neq 0$, and we obtain $$\chi^{u + \langle \ol{p}, u \rangle \ol{e_2}}(x) = 1.$$ Denote $w = u + \langle \ol{p}, u \rangle \ol{e_2} = u + \langle p_1, u \rangle e_2^{(1)} + \ldots + \langle p_s, u \rangle e_2^{(s)}$. Notice that $w \in p_i^\perp$ for any $i = 1, \ldots, s$. Indeed, $\langle p_i, w \rangle = \langle p_i, u \rangle - \langle p_i, u \rangle = 0$. Thus, $w \in p_1^\perp \cap \ldots \cap p_s^\perp \cap \gamma^\perp$. In other words, $w \in \cone(\tau, \gamma)^\perp$. Moreover, choosing $u \in \cone(\tau, \gamma)^\perp$, we obtain as $w$ all possible elements of $\cone(\tau, \gamma)^\perp$, since in this case $w = u$.

Thus, the idempotents in $O_\gamma$ are the points $x$ satisfying $\chi^u(x) = 1$ for all $u \in \cone(\tau, \gamma)^\perp$.
\end{proof}

\section{Idempotents and actions of one-parameter subgroups} \label{sec:idemp_geom}
In the previous section, for each orbit $O_\gamma$ of the torus $\TT$ in the root monoid~$X_\sigma$, we found the set of idempotents $E_\gamma$ in the orbit $O_\gamma$. In this section, we describe the closure of the set $E_\gamma$. 

Let $X_\sigma$ be a root monoid with the group of invertible elements $\GG = \G_a^k \leftthreetimes T$, defined by the regular cone $\tau = \cone(p_1, \ldots, p_{k})$ and the set of Demazure roots $\{e_1^{(r)}, e_2^{(r)} \mid r = 1,\ldots, k\}$. 

For a face $\gamma \subseteq \sigma$ such that for all rays $p_1, \ldots, p_s \notin \gamma$ exactly one of the corresponding Demazure roots $e_1^{(r)}, e_2^{(r)}$ lies in $\gamma^\perp$, we construct a unipotent group $H_\gamma$ as follows. 
For each $r = 1, \ldots, s$, choose from the pair $e_1^{(r)}, e_2^{(r)}$ the Demazure root not lying in $\gamma^\perp$, and denote by~$H_r$ the corresponding one-parameter subgroup in $\Aut(X_\sigma)$ normalized by the torus $\TT$, see~\ref{root}. Put \[H_\gamma:= \langle H_1, \ldots, H_s \rangle \subseteq \Aut(X_\sigma).\] Recall also that the torus $T$ is identified with a subtorus of $\TT$, so that we may consider the action of $T$ on $X_\sigma$. 

\begin{theorem}\label{idem_thm_2}
	In the notation of Theorems~\ref{idem_thm_1} and~\ref{thm:mon}, if $E_\gamma \neq \varnothing$, then
	\begin{enumerate}
		\item $\ol{E_\gamma} = \bigcup_{\{i_1, \ldots, i_l\} \subseteq \{1, \ldots, s\}} E_{\cone(\gamma, p_{i_1}, \ldots, p_{i_l})}$;
		\item $E_\gamma$ is the $T$-orbit of the point $x_\gamma$;
		\item $\ol{E_\gamma}$ is the $H_\gamma$-orbit of the point $x_{\cone(\tau, \gamma)}$.
	\end{enumerate}
\end{theorem}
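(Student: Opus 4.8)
The plan is to prove the three statements essentially in the order (2), (1), (3), since (2) gives a concrete handle on $E_\gamma$, (1) then follows by applying (2) to each face $\cone(\gamma, p_{i_1},\ldots,p_{i_l})$ together with a dimension/closure argument, and (3) combines (1) with an analysis of how the one-parameter subgroups $H_r$ move the distinguished points. Throughout I keep the normalization from the proof of Theorem~\ref{idem_thm_1}(3): for the rays $p_1,\ldots,p_s\notin\gamma$ we arrange $e_2^{(r)}\in\gamma^\perp$, $e_1^{(r)}\notin\gamma^\perp$, so that $H_r$ corresponds to $e_1^{(r)}$.

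\textbf{Step 1: $E_\gamma$ is the $T$-orbit of $x_\gamma$.} By Theorem~\ref{idem_thm_1}(3), $E_\gamma = O_\gamma\cap\{\chi^u=1\ \forall u\in\cone(\tau,\gamma)^\perp\cap S_\sigma\}$. The orbit $O_\gamma$ is a single $\TT$-orbit, isomorphic to the torus $\Hom(M(\gamma),\KK^\times)$ where $M(\gamma)=\gamma^\perp\cap M$; concretely a point of $O_\gamma$ is a homomorphism $\gamma^\perp\cap S_\sigma\to\KK^\times$, i.e. an element of $\Hom(\gamma^\perp\cap M,\KK^\times)$ modulo nothing. The condition cutting out $E_\gamma$ says the homomorphism is trivial on $\cone(\tau,\gamma)^\perp\cap M$, so $E_\gamma$ is a coset of the subtorus $\Hom(M(\gamma)/(\cone(\tau,\gamma)^\perp\cap M),\KK^\times)$ through $x_\gamma$. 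It remains to identify this subtorus with the image of $T=O_\tau$ acting on $x_\gamma$. The action of $t\in T\subseteq\TT$ on $x_\gamma$ scales $\chi^u(x_\gamma)$ by the value at $t$ of the character of $\TT$ restricting to $\chi^u$; since $T$ is identified (subsection~\ref{sec:conhom}) via $\theta(t,t)\mapsto\cdots$ with $\Hom(M(\tau),\KK^\times)$, and $\chi^u$ is fixed by $T$ exactly when $u\in\tau^\perp$, the stabilizer of $x_\gamma$ in $T$ is the subtorus on which all $\chi^u$, $u\in\gamma^\perp$, are trivial — and one checks $T$ acts on the $\chi^u$, $u\in\gamma^\perp$, through the quotient lattice $\gamma^\perp\cap M$ modulo $\cone(\tau,\gamma)^\perp\cap M$ (the relations coming exactly from the $p_1,\ldots,p_s$ that distinguish $\tau$ from its face in $\gamma$). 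Hence $T\cdot x_\gamma = E_\gamma$. The one genuinely fiddly point is checking that the lattice of characters through which $T$ acts on $O_\gamma$ is precisely $(\gamma^\perp\cap M)/(\cone(\tau,\gamma)^\perp\cap M)$ rather than a finite-index sublattice; this uses regularity of $\tau$ (so $p_1,\ldots,p_k$ extend to a basis) exactly as in subsection~\ref{1sec:orb}.

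\textbf{Step 2: the closure formula.} For a subset $I=\{i_1,\ldots,i_l\}\subseteq\{1,\ldots,s\}$, Lemma~\ref{lm:cone} (applied with the appropriate sub-collection of rays, since for each such $p_{i}$ one of its Demazure roots lies in $\gamma^\perp$) shows $\gamma_I:=\cone(\gamma,p_{i_1},\ldots,p_{i_l})$ is a face of $\sigma$, and the hypothesis "exactly one root in $\gamma_I^\perp$" is inherited for the remaining rays $p_r\notin\gamma_I$ (as $\gamma_I^\perp\subseteq\gamma^\perp$, a root in $\gamma^\perp$ but not in $p_r^\perp$ for $p_r\in\gamma_I\setminus\gamma$ leaves $\gamma_I^\perp$; one must check the root not in $\gamma^\perp$ still is not in $\gamma_I^\perp$, which is immediate). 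So each $E_{\gamma_I}$ is again described by Theorem~\ref{idem_thm_1}(3), with $\cone(\tau,\gamma_I)=\cone(\tau,\gamma)$ unchanged, hence $E_{\gamma_I}=O_{\gamma_I}\cap\{\chi^u=1\ \forall u\in\cone(\tau,\gamma)^\perp\}$. Now $\ol{O_\gamma}=\bigcup_{\gamma\preceq\delta}O_\delta$, and the plan is to show that among the faces $\delta$ of $\sigma$ containing $\gamma$, the ones meeting $E_\gamma$'s closure are exactly the $\gamma_I$: a limit of points of $O_\gamma$ satisfying $\chi^u=1$ on $\cone(\tau,\gamma)^\perp$ still satisfies $\chi^u=1$ there by continuity whenever $\chi^u$ does not vanish, giving $\subseteq$; for $\supseteq$, from Step 1 and the torus-orbit picture, $\ol{E_\gamma}=\ol{T\cdot x_\gamma}$ is a union of $T$-orbits whose closures are governed by which primitive vectors the relevant one-parameter subgroups of $T$ degenerate towards, and these are precisely $p_1,\ldots,p_s$ (the rays "added" to $\gamma$ to reach $\tau$). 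The main obstacle is this last inclusion: one must verify that $\ol{T\cdot x_\gamma}$ in $X_\sigma$ hits $O_{\gamma_I}$ for every $I$ and no other orbit, which amounts to identifying the fan of the toric variety $\ol{E_\gamma}$ (with torus $T$) and checking its cones are exactly the faces of $\cone(p_1,\ldots,p_s)$; this is where the regularity of $\tau$ and the compatibility condition $\langle p_r,e_l^{(r)}\rangle=-\delta_{rs}$ do the real work, ensuring the limit $\lim_{z\to0}\lambda_{p_r}(z)\cdot x_\gamma$ lands in $O_{\cone(\gamma,p_r)}$ and satisfies the required $\chi^u=1$ relations.

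\textbf{Step 3: $\ol{E_\gamma}$ is the $H_\gamma$-orbit of $x_{\cone(\tau,\gamma)}$.} Write $\delta=\cone(\tau,\gamma)$, a face of $\sigma$ by Lemma~\ref{lm:cone}; by (1) applied with $I=\{1,\ldots,s\}$ we have $E_\delta=\{x_\delta\}$ and $x_\delta\in\ol{E_\gamma}$. Since each $H_r$ is normalized by $\TT$ and its degree is the Demazure root $e_1^{(r)}\notin\gamma^\perp$, I claim $H_\gamma\cdot x_\delta=\ol{E_\gamma}$. For $\subseteq$: the group $H_\gamma$ preserves $\ol{E_\gamma}$ — indeed $H_\gamma$ commutes with $T$ up to the torus action, and one checks directly from formula~\eqref{thm:finalcom}/the $\G_a$-action formulas that applying $\exp(\alpha\delta_r)$ to a point of $E_\gamma$ (or of any $E_{\gamma_I}$) produces a point still satisfying $\chi^u=1$ on $\cone(\tau,\gamma)^\perp$ and lying in the relevant orbit, because $\langle p_j,e_1^{(r)}\rangle=0$ for $j\ne r$ and $\cone(\tau,\gamma)^\perp\subseteq e_1^{(r)\perp}$ forces these $\chi^u$ to be $H_r$-invariant; moreover $H_\gamma\cdot x_\delta$ is irreducible and contains $x_\delta$. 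For $\supseteq$: a dimension count — $\dim H_\gamma=s=\dim E_\gamma$ (the latter from Step 1, since $E_\gamma$ is the $T$-orbit of $x_\gamma$ and the relevant sub-quotient torus has rank $s$), so $H_\gamma\cdot x_\delta$ is an irreducible $s$-dimensional subvariety of the $s$-dimensional irreducible $\ol{E_\gamma}$, hence equals it. The orbit decomposition into the $E_{\gamma_I}$ is then exactly the $H_\gamma$-orbit stratification by which coordinates $\alpha_{i_1},\ldots,\alpha_{i_l}$ are nonzero, matching (1). The subtle point here is the $H_\gamma$-invariance of $\ol{E_\gamma}$: one needs that the unipotent flows along $e_1^{(1)},\ldots,e_1^{(s)}$ do not leave the locus $\{\chi^u=1\ \forall u\in\cone(\tau,\gamma)^\perp\}$, which follows since every such $u$ pairs to $0$ with every $p_r$, $r\le s$, hence $u\in e_1^{(r)\perp}$ and $\chi^u$ lies in the kernel of $\delta_r$; writing this out carefully for points in the boundary orbits $O_{\gamma_I}$ (not just the open orbit $O_\gamma$) is the one place a short extra argument using formula~\eqref{sum_form} is needed.
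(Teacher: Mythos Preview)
Your approach differs substantially from the paper's. The paper proceeds entirely by induction on~$s$, using the description of $H_e$-connected orbit pairs from~\cite{AKZ2012}: it first shows the idempotent locus is $R_\tau$- and $H_\gamma$-invariant (Lemmas~\ref{invar}, \ref{st:h-idem}), then proves inductively that $E_\gamma$ is a single $R_\tau$-orbit (Lemma~\ref{one_orb}) and that $\ol{E_\gamma}\subseteq H_\gamma x_{\cone(\tau,\gamma)}$ (Lemma~\ref{st:cl_gam}); the reverse inclusions follow from the orbit-pair combinatorics (Lemmas~\ref{st:incl}, \ref{st:incl2}). There is no dimension count and no explicit lattice computation.

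Your Step~3 has a genuine gap, and in fact fails as written. You assert $\dim(H_\gamma\cdot x_\delta)=s$ from $\dim H_\gamma=s$, but never check the stabilizer of $x_\delta$. With your convention that $H_r$ corresponds to the root $e_1^{(r)}\notin\gamma^\perp$, every $H_r$ actually \emph{fixes} all of $O_\gamma$ and all of $O_\delta$: for any $x\in O_\gamma$, $u\in S_\sigma$, and $j\ge 1$, one has $u+je_1^{(r)}\notin\gamma^\perp$ because there is a ray $q$ of~$\gamma$ with $\langle q,e_1^{(r)}\rangle>0$, so $\langle q,u+je_1^{(r)}\rangle\ge j\langle q,e_1^{(r)}\rangle>0$; hence every term past $j=0$ in the $\G_a$-expansion vanishes at~$x$. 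Thus $H_\gamma\cdot x_\delta=\{x_\delta\}$ and the dimension argument collapses. The $H_e$-connectedness criterion~\eqref{idempHe_lemeq} requires the root to lie \emph{in}~$\gamma^\perp$ for $(O_\gamma,O_{\cone(\gamma,p_r)})$ to be connected; the paper's induction in Lemma~\ref{st:cl_gam} uses exactly this, so the operative convention for $H_r$ must be the root in~$\gamma^\perp$, not the one outside it. Even after correcting this, a dimension count would still require showing the stabilizer of $x_\delta$ in $\G_a^s$ is trivial, which you do not do; the paper's induction sidesteps the issue by exhibiting $E_\gamma\subseteq H_\gamma x_\delta$ element by element. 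Your Step~1 is also incomplete: identifying the image of $T$ acting on $O_\gamma$ with the torus dual to $(\gamma^\perp\cap M)/(\cone(\tau,\gamma)^\perp\cap M)$ is the entire content of~(2), and you only assert it.
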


\subsection{One-parameter subgroups}
Before proving the theorem, we recall some results from~\cite{AKZ2012}. Let $X_\sigma$ be an affine toric variety with acting torus~$\TT$, let $p$ be a primitive vector on a ray of the cone~$\sigma$, and let $e$ be the Demazure root corresponding to~$p$. Denote by $H_e$ the one-parameter subgroup in $\Aut(X_\sigma)$ normalized by the torus~$\TT$ and corresponding to~$e$, see~\ref{root}. 

In \cite[Proposition~2.1]{AKZ2012} it was proved that every orbit of the group $H_e$ in $X_\sigma$ is either a point or intersects exactly two $\TT$-orbits $O_1$, $O_2$, where ${\dim O_1 = \dim O_2 + 1}$ and $O_2 \subseteq \overline{O_1}$. Moreover, in the latter case
\begin{equation}
	\label{Heorbit_eq}
	\begin{aligned}
		O_1 \cap H_ex = R_px,\\
		O_2 \cap H_ex = \{pt\},
	\end{aligned}
\end{equation}
where $x$ is any point in $O_1$, and $R_p \cong \KK^\times$ is the one-parameter subgroup in $\Aut(X_\sigma)$ given by the primitive vector $p \in N$. Such a pair of $\TT$-orbits $(O_1, O_2)$ is called \emph{$H_e$-connected}. 

In~\cite[Lemma 2.2]{AKZ2012} a combinatorial condition on the faces of $\sigma$ corresponding to $H_e$-connected orbits is given, from which it follows that 
	\begin{equation}\label{idempHe_lemeq}
		\text{for a face $\gamma'$ the pair $(O_\gamma, O_{\gamma'})$ is $H_e$-connected } \; \Leftrightarrow \; 
		\left\{\begin{aligned}
			&\text{$\rho$ is not a ray of the cone~$\gamma$},\\
			&\gamma' = \cone(\gamma, \rho),\\
			&e \in \gamma^\perp,
		\end{aligned}\right.
	\end{equation}
see also~(14) in~\cite{YZ}.

In our case of a root monoid, for brevity we denote the one-parameter subgroup corresponding to the ray $p_i$ by $R_i = R_{p_i}$, where $i = 1, \ldots, k$. Clearly,
\[T = R_\tau = \langle R_1, \ldots, R_k\rangle.\]

\subsection{Proof of Theorem~\ref{idem_thm_2}}
We prove several auxiliary statements. We assume $E_\gamma \neq \varnothing$. 

\begin{lemma}\label{invar}
The set of idempotents in $X_\sigma$ is invariant under all $R_i$ with $p_i \in \tau$. In particular, the set of idempotents is invariant under $R_\tau$.	
\end{lemma}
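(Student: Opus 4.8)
The plan is to show that for each $p_i \in \tau$ the one-parameter subgroup $R_i$ acts on $X_\sigma$ by monoid automorphisms; since a monoid automorphism sends idempotents to idempotents, the set of idempotents is $R_i$-invariant, and hence invariant under $R_\tau = \langle R_1, \dots, R_k\rangle$.

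First recall that $R_i = R_{p_i}$ is the one-parameter subgroup of the acting torus $\TT$ given by the cocharacter $p_i \in N$; on $\KK[X_\sigma] = \bigoplus_{u \in S_\sigma}\KK\chi^u$ it acts by $\chi^u(R_i(s)\cdot x) = s^{\langle p_i, u\rangle}\chi^u(x)$ for $s \in \KK^\times$ (the sign convention is irrelevant, since replacing $s$ by $s^{-1}$ changes nothing below). The essential input is the compatibility condition of Theorem~\ref{thm:mon}: because $p_i$ is a ray of $\tau$ and $\{e_1^{(r)}, e_2^{(r)}\}$ is compatible with $\tau$, we have $\langle p_i, e_1^{(r)}\rangle = \langle p_i, e_2^{(r)}\rangle = -\delta_{ir}$ for all $r = 1, \dots, k$.

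Next, fix $s \in \KK^\times$, set $\phi = R_i(s)$, and compute $\chi^u(\phi(x) * \phi(y))$ for $x, y \in X_\sigma$, $u \in S_\sigma$, using the expanded form \eqref{sum_form} of the comultiplication rather than \eqref{thm:finalcom} directly (this is important, since the individual characters $\chi^{e_l^{(r)}}$ are not regular functions on $X_\sigma$, whereas the terms $\chi^{u+\ol{i}\cdot\ol{e_2}}$, $\chi^{u+\ol{j}\cdot\ol{e_1}}$ of \eqref{sum_form} are, as established in the proof of Theorem~\ref{thm:mon}). For the summand indexed by $\ol{i} + \ol{j} = \langle\ol{p}, u\rangle$, the compatibility relation gives $\langle p_i, u + \ol{i}\cdot\ol{e_2}\rangle = \langle p_i, u\rangle - i_i$ and $\langle p_i, u + \ol{j}\cdot\ol{e_1}\rangle = \langle p_i, u\rangle - j_i$, so applying $\phi$ to both arguments multiplies this summand by $s^{(\langle p_i,u\rangle - i_i) + (\langle p_i,u\rangle - j_i)} = s^{\langle p_i, u\rangle}$, where we used $i_i + j_i = \langle p_i, u\rangle$. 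Since this exponent does not depend on $\ol{i}, \ol{j}$, summing yields $\chi^u(\phi(x)*\phi(y)) = s^{\langle p_i, u\rangle}\chi^u(x*y) = \chi^u(\phi(x*y))$, so $\phi(x*y) = \phi(x)*\phi(y)$. A parallel, easier check gives $\phi(x_\tau) = x_\tau$: for $u \in \tau^\perp$ one has $\langle p_i, u\rangle = 0$, while for $u \notin \tau^\perp$ one has $\chi^u(x_\tau) = 0$. Hence $\phi$ is a monoid automorphism of $X_\sigma$, and therefore $x*x = x$ implies $\phi(x)*\phi(x) = \phi(x*x) = \phi(x)$, i.e. $\phi$ preserves the set of idempotents. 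Applying this for $i = 1, \dots, k$ and composing proves invariance under $R_\tau$.

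I do not expect a serious obstacle: the argument is bookkeeping with the comultiplication formula, the only subtlety being the choice to work with the polynomial identity \eqref{sum_form} (whose terms are genuine regular functions) instead of the formal product \eqref{thm:finalcom}.
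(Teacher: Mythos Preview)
Your proof is correct and follows a different route from the paper's. The paper relies on the classification in Theorem~\ref{idem_thm_1}: for an idempotent $x \in E_\gamma$ it checks directly that $\chi^u(R_i(t)x) = t^{\langle p_i,u\rangle}\chi^u(x) = 1$ for all $u \in \cone(\tau,\gamma)^\perp$ (where $\langle p_i, u\rangle = 0$ since $p_i \in \tau$), and combined with $R_i(t)x \in O_\gamma$ (as $R_i \subseteq \TT$) this places $R_i(t)x$ back in $E_\gamma$. Your argument is more self-contained: you establish the stronger statement that each $R_i(s)$ is a monoid automorphism of $X_\sigma$, from which invariance of the set of idempotents is immediate and does not depend on Theorem~\ref{idem_thm_1} at all. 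The paper's approach is tailored to the orbit-by-orbit viewpoint exploited in the subsequent lemmas (it shows at once that each individual $E_\gamma$ is $R_i$-stable), whereas yours makes transparent the underlying $R_\tau$-symmetry of the comultiplication formula; of course your version also yields $R_i$-invariance of each $E_\gamma$ a posteriori, since $R_i$ preserves every $\TT$-orbit $O_\gamma$.
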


\begin{proof} Take any $x \in E_\gamma$. We check that $R_i(t) x \in E_\gamma$. By Theorem~\ref{idem_thm_1} we need to verify that $\chi^u(R_i(t) x) = 1$ for all ${u \in \cone(\tau, \gamma)^\perp}$, which is equivalent to $R_i(t)\chi^u(x) = 1$. Note that $R_i(t)\chi^u = t^{\langle p_i, u \rangle}\chi^u = \chi^u$ since $\langle p_i, u \rangle = 0$. Moreover, by Theorem~\ref{idem_thm_1} we have $\chi^u (x) = 1$ since $x \in E_\gamma$. The set $E_\gamma$ is $R_\tau$-invariant, because $R_\tau$ is generated by all~$R_i$.	
\end{proof}

\begin{lemma}\label{st:h-idem}
The set of idempotents in $X_\sigma$ is invariant under all $H_i$ with $p_i \in \tau$. In particular, the set of idempotents is invariant under $H_\gamma$.
\end{lemma}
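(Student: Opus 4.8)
The plan is to follow the strategy of Lemma~\ref{invar}, trading its elementary $R_{p_i}$‑orbit bookkeeping for the description of $H_e$‑orbits on a toric variety recalled in~\eqref{Heorbit_eq} and~\eqref{idempHe_lemeq}, and then closing up with the fact that the idempotent locus of $X_\sigma$ is Zariski closed. We keep the standing assumption $E_\gamma\neq\varnothing$ (so Theorem~\ref{idem_thm_1} leaves us in case~(1) or~(3)), fix a generator $H_i$ of $H_\gamma$ --- the one‑parameter subgroup attached to the Demazure root $e^{(i)}$ of $p_i$ lying in $\gamma^\perp$, so that $p_i\in\tau$ and $p_i\notin\gamma$ --- and show that $H_i(t)x$ stays an idempotent for every $x\in E_\gamma$ and $t\in\KK$; the passage to $H_\gamma$ is then formal.

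Here are the steps. First, $E=\{x\in X_\sigma: x*x=x\}$ is closed, being the preimage of the diagonal under the morphism $x\mapsto(x,x*x)$. Second, for $u\in\cone(\tau,\gamma)^\perp\cap S_\sigma$ one has $u\in\tau^\perp$, hence $\langle p_i,u\rangle=0$; since the homogeneous LND defining $H_i$ sends $\chi^u$ to a scalar multiple of $\langle p_i,u\rangle\,\chi^{u+e^{(i)}}$, it annihilates $\chi^u$, so $H_i$ fixes every such function, and the equations $\chi^u=1$ cutting out $E_\gamma$ inside $O_\gamma$ in Theorem~\ref{idem_thm_1}(3) are $H_i$‑invariant. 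Third, since $e^{(i)}\in\gamma^\perp$ and $\rho_i$ is not a ray of $\gamma$, by~\eqref{idempHe_lemeq} the pair $(O_\gamma,O_{\cone(\gamma,\rho_i)})$ is $H_i$‑connected with $O_\gamma$ the larger orbit; hence by~\eqref{Heorbit_eq} the $H_i$‑orbit of $x\in E_\gamma\subseteq O_\gamma$ is either $\{x\}$ --- and there is nothing to prove --- or it equals $R_{p_i}x\cup\{y\}$ with $y$ its unique point in $O_{\cone(\gamma,\rho_i)}$. In the latter case Lemma~\ref{invar} applies (here $p_i\in\tau$) and gives $R_{p_i}x\subseteq E_\gamma\subseteq E$, whence $y\in\overline{R_{p_i}x}\subseteq\overline{E}=E$. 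Thus $H_i(\KK)\cdot E_\gamma\subseteq E$, and passing to Zariski closures $H_i(\KK)\cdot\overline{E_\gamma}\subseteq E$ as well. Fourth, to obtain $H_\gamma$‑invariance of $\overline{E_\gamma}$ one inducts on the length of a word in the generators $H_{i_1},\dots,H_{i_\ell}$, noting that each intermediate face $\cone(\gamma,\rho_{i_1},\dots,\rho_{i_j})$ again satisfies the hypothesis of Theorem~\ref{idem_thm_1}(3) --- the same combinatorics as in Lemmas~\ref{lm:cone} and~\ref{lm:perpcone}, based on the compatibility relations $\langle p_r,e_1^{(s)}\rangle=\langle p_r,e_2^{(s)}\rangle=-\delta_{rs}$ --- so that the previous two steps (and with them Lemma~\ref{invar}) can be reapplied to each $E_{\cone(\gamma,\rho_{i_1},\dots,\rho_{i_j})}$.

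The step I expect to be the real obstacle is the third one: determining, for each generator $H_i$, which of the two $\TT$‑orbits in the $H_i$‑connected pair is the larger, and controlling the single exceptional point $y$ --- i.e.\ checking that when $H_i$ pushes a point of $E_\gamma$ into the smaller torus orbit $O_{\cone(\gamma,\rho_i)}$ it still lands inside a set of idempotents of exactly the form described in Theorem~\ref{idem_thm_1}(3). Once this is settled, the rest is a routine combination of Lemma~\ref{invar}, the orbit formulas~\eqref{Heorbit_eq}--\eqref{idempHe_lemeq}, and closedness of the idempotent locus; as a byproduct this argument also recovers the description of $\overline{E_\gamma}$ given in Theorem~\ref{idem_thm_2}(1).
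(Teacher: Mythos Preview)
Your Step~3 is exactly the paper's argument: for any idempotent $x$ the $H_i$-orbit is either a single point or equals $\overline{R_ix}=R_ix\cup\{x'\}$; then Lemma~\ref{invar} gives that $R_ix$ consists of idempotents, and closedness of the idempotent locus forces $x'$ to be one too. That is the entire proof in the paper --- three lines, applied to an \emph{arbitrary} idempotent $x\in X_\sigma$, with no reference to $\gamma$, to the equations of Theorem~\ref{idem_thm_1}(3), or to which pair of $\TT$-orbits is $H_i$-connected.

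Your wrapping around this core introduces two problems. First, you restrict to $x\in E_\gamma$ and then try to recover the general statement by an induction over faces $\cone(\gamma,\rho_{i_1},\dots,\rho_{i_j})$; this is unnecessary (the one-line argument already works for every idempotent) and, as written, still does not cover idempotents in orbits $O_\beta$ with $\tau\subseteq\beta$ unrelated to $\gamma$. Second, and more seriously, you have the definition of $H_i$ backwards: in the paper $H_r$ is attached to the root from the pair $e_1^{(r)},e_2^{(r)}$ \emph{not} lying in $\gamma^\perp$, not the one lying in $\gamma^\perp$. With the correct convention your appeal to~\eqref{idempHe_lemeq} fails, so your claim that $(O_\gamma,O_{\cone(\gamma,\rho_i)})$ is $H_i$-connected with $O_\gamma$ the larger orbit is false, and you cannot control where the orbit lands by this route. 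The paper's proof avoids this entirely: it never needs to identify the connected pair, only the shape $R_ix\cup\{x'\}$ of a generic $H_e$-orbit. Drop Steps~2 and~4, take $x$ arbitrary, and what remains is the paper's proof verbatim.
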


\begin{proof} Let $x \in X_\sigma$ be an idempotent. Recall that $|H_i x| = 1$ or $\ol{R_i x} = R_i x \cup \{x'\} = H_i x$, where $x' \in X_\sigma$. Note that by Lemma~\ref{invar} the orbit $ R_i x$ consists of idempotents. Since the set of idempotents is closed, $x'$ is an idempotent. Hence, the orbit $H_i x$ also consists of idempotents. The same holds for $H_\gamma$, since $H_\gamma$ is generated by $H_i$.
\end{proof}

\begin{lemma}\label{st:st}
Let $\beta$ be a face of the cone $\sigma$. The points in the orbit $O_\beta$ are fixed under $R_p$ if and only if $p \in \langle\beta \rangle$.
\end{lemma}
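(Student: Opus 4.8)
The plan is to reduce the statement to a linear-algebra condition on the lattice vector $p$ against the perpendicular space $\beta^\perp$, using the description of the orbit $O_\beta$ in terms of semigroup homomorphisms from Section~\ref{sec:tor}.

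First I would recall that $R_p$ is the one-parameter subgroup of the acting torus $\TT$ given by the cocharacter $p \in N$, so consistently with the computation in Lemma~\ref{invar} its action on $\KK[X_\sigma] = \bigoplus_{u \in S_\sigma}\KK\chi^u$ is $R_p(t)\cdot\chi^u = t^{\langle p, u\rangle}\chi^u$. Consequently, a point $x \in X_\sigma$ is fixed by $R_p(t)$ for all $t$ if and only if $\langle p, u\rangle = 0$ for every $u \in S_\sigma$ with $\chi^u(x)\neq 0$.

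Next, for $x \in O_\beta$ the set $\{u \in S_\sigma \mid \chi^u(x)\neq 0\}$ equals $\beta^\perp \cap S_\sigma$ by~\eqref{tor:fun_orb2}, and this set is the same for every point of the orbit; hence ``$O_\beta$ is pointwise fixed by $R_p$'' is equivalent to ``$\langle p, u\rangle = 0$ for all $u \in \beta^\perp\cap S_\sigma$''. The remaining point is that $\beta^\perp\cap S_\sigma = (\beta^\perp\cap\sigma^\vee)\cap M = \beta^*\cap M$ spans $\beta^\perp$ over $\QQ$: indeed $\beta^* = \beta^\perp\cap\sigma^\vee$ has dimension $n - \dim\beta = \dim\beta^\perp$ by~\cite[Proposition 1.2.10]{F}, so it is a full-dimensional cone inside the rational subspace $\beta^\perp$, and the lattice points of a full-dimensional rational cone span the ambient space. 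Therefore the condition becomes $\langle p, u\rangle = 0$ for all $u\in\beta^\perp$, i.e. $p\in(\beta^\perp)^\perp = \langle\beta\rangle$, which is the assertion.

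There is essentially no serious obstacle here; the only point requiring a little care is that the common zero locus of the $\chi^u$ with $u\in\beta^\perp\cap S_\sigma$ genuinely cuts out $\langle\beta\rangle$ and not merely a sublattice, which is exactly what the dimension count above guarantees. If one prefers, the same argument can be phrased by identifying $O_\beta$ with the torus $\Hom(\beta^\perp\cap M,\KK^\times)$ and observing that $R_p$ acts on it through the image of $p$ in $N/(N\cap\langle\beta\rangle)$, which is trivial precisely when $p\in\langle\beta\rangle$.
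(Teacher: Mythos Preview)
Your proof is correct and follows essentially the same approach as the paper: both reduce the fixed-point condition to $\langle p,u\rangle=0$ for all $u\in\beta^\perp\cap\sigma^\vee$ (equivalently $\beta^\perp\cap S_\sigma$) via the formula $R_p(t)\cdot\chi^u=t^{\langle p,u\rangle}\chi^u$, and then conclude $p\in\langle\beta\rangle$. The only difference is that the paper invokes $(\beta^\perp\cap\sigma^\vee)^\perp=\langle\beta\rangle$ directly, whereas you spell out the spanning argument via the dimension formula for dual faces; this extra care is fine and does not change the substance of the argument.
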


\begin{proof} Recall that $x \in O_\beta$ is equivalent to $\chi^u(x) = 0$ for all $u \notin \beta^\perp \cap \sigma^\vee$ and $\chi^u(x) \neq 0$ for all $u \in \beta^\perp \cap \sigma^\vee$. Note that a point~$x$ is fixed under $R_p$ if the equality $R_p \chi^u(x)  = \chi^u(x)$ holds for every $u \in \sigma^\vee$. Since $R_p \chi^u = t^{\langle p, u \rangle} \chi^u$, the equality is equivalent to $\langle p, u \rangle = 0$ for all $\chi^u(x) \neq 0$. In other words, $\langle p, u \rangle = 0$ for all $u \in \beta^\perp \cap \sigma^\vee$. Hence, $p \in \left(  \beta^\perp \cap \sigma^\vee \right)^\perp = \langle \beta \rangle$.
\end{proof}

\begin{lemma}\label{st:cl_gam}
The closure $\ol{E_\gamma}$ is contained in the $H_\gamma$-orbit of the point $x_{\cone(\tau, \gamma)}$.
\end{lemma}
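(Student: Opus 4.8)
The plan is to prove the stronger assertion that $H_\gamma\cdot x_{\cone(\tau,\gamma)}$ is \emph{closed} in $X_\sigma$ and contains $E_\gamma$; taking closures then yields $\ol{E_\gamma}\subseteq\ol{H_\gamma\cdot x_{\cone(\tau,\gamma)}}=H_\gamma\cdot x_{\cone(\tau,\gamma)}$. First I would observe that $H_\gamma$ is a unipotent group. Writing $e_2^{(r)}\in\gamma^\perp$ for the Demazure root of the ray $p_r$ underlying $H_r$ (in the normalization of the proof of Theorem~\ref{idem_thm_1}), compatibility with $\tau$ gives $\langle p_j,e_2^{(r)}\rangle=-\delta_{rj}$; the associated homogeneous locally nilpotent derivation acts by $\delta_r(\chi^u)=\langle p_r,u\rangle\,\chi^{u+e_2^{(r)}}$, and a short computation using $\langle p_r,e_2^{(r')}\rangle=0$ for $r\ne r'$ shows that the $\delta_r$ pairwise commute, whence the $H_r$ pairwise commute and $H_\gamma\cong\G_a^{s}$. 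Since $X_\sigma$ is affine, every orbit of the unipotent group $H_\gamma$ on $X_\sigma$ is closed. Moreover $x_{\cone(\tau,\gamma)}$ is an idempotent: $\tau\subseteq\cone(\tau,\gamma)$, so Theorem~\ref{idem_thm_1}(1) gives $E_{\cone(\tau,\gamma)}=\{x_{\cone(\tau,\gamma)}\}$; hence by Lemma~\ref{st:h-idem} the whole orbit $H_\gamma\cdot x_{\cone(\tau,\gamma)}$ consists of idempotents.

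It then remains to prove $E_\gamma\subseteq H_\gamma\cdot x_{\cone(\tau,\gamma)}$, which I would do by descending induction over the faces lying between $\gamma$ and $\cone(\tau,\gamma)$. With the convention $p_1,\dots,p_s\notin\gamma$ and $p_{s+1},\dots,p_k\in\gamma$, put $\gamma_J:=\cone(\gamma,(p_j)_{j\in J})$ for $J\subseteq\{1,\dots,s\}$. By Lemma~\ref{lm:cone} each $\gamma_J$ is a face of $\sigma$, and for $r\notin J$ one has $e_2^{(r)}\in\gamma_J^\perp$ while $e_1^{(r)}\notin\gamma_J^\perp$, so case~(3) of Theorem~\ref{idem_thm_1} applies to $\gamma_J$ and $E_{\gamma_J}\ne\varnothing$; note $\gamma_{\{1,\dots,s\}}=\cone(\tau,\gamma)$ and $\gamma_\varnothing=\gamma$. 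I claim $E_{\gamma_J}\subseteq H_\gamma\cdot x_{\cone(\tau,\gamma)}$ for every such $J$, proved by descending induction on $|J|$. The base case $J=\{1,\dots,s\}$ is $E_{\cone(\tau,\gamma)}=\{x_{\cone(\tau,\gamma)}\}$. For the step, assume the claim for all $J'$ with $|J'|=|J|+1$, pick $r\in\{1,\dots,s\}\setminus J$ and set $J'=J\cup\{r\}$; since $p_r$ is not a ray of $\gamma_J$, $\gamma_{J'}=\cone(\gamma_J,p_r)$, and $e_2^{(r)}\in\gamma_J^\perp$, formula~\eqref{idempHe_lemeq} shows that the pair $(O_{\gamma_J},O_{\gamma_{J'}})$ is $H_r$-connected. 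By~\eqref{Heorbit_eq} and the fact that $H_r$ is normalized by $\TT$, every point of $O_{\gamma_J}$ lies on a nontrivial $H_r$-orbit meeting $O_{\gamma_{J'}}$ in a single point; if $x\in E_{\gamma_J}$ then that entire orbit consists of idempotents by Lemma~\ref{st:h-idem}, so this point, call it $y$, lies in $E_{\gamma_{J'}}$, hence in $H_\gamma\cdot x_{\cone(\tau,\gamma)}$ by the induction hypothesis, and since $x$ and $y$ lie on the same $H_r$-orbit, $x\in H_r\cdot y\subseteq H_\gamma\cdot y=H_\gamma\cdot x_{\cone(\tau,\gamma)}$. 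Taking $J=\varnothing$ gives $E_\gamma\subseteq H_\gamma\cdot x_{\cone(\tau,\gamma)}$, and the first paragraph finishes the proof.

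I expect the main obstacle to be the combinatorial bookkeeping in the inductive step — checking that each $\gamma_J$ is a face of $\sigma$, that case~(3) of Theorem~\ref{idem_thm_1} applies to it (so $E_{\gamma_J}$ is nonempty and has the stated description), and that the hypotheses of~\eqref{idempHe_lemeq} hold for $(O_{\gamma_J},O_{\gamma_{J'}})$ — which is precisely where one uses the relations $\langle p_j,e_l^{(r)}\rangle=-\delta_{rj}$, the fact that $e_2^{(r)}\in\gamma^\perp$ while $e_1^{(r)}\notin\gamma^\perp$, and Lemma~\ref{lm:cone}. A more computational route avoids the induction (and even gives the equality $\ol{E_\gamma}=H_\gamma\cdot x_{\cone(\tau,\gamma)}$): since the $\delta_r$ commute, $\exp\!\big(\sum_r t_r\delta_r\big)(\chi^u)=\chi^u\prod_r(1+t_r\chi^{e_2^{(r)}})^{\langle p_r,u\rangle}$, so $\chi^u\big(\exp(\sum_r t_r\delta_r)\cdot x_{\cone(\tau,\gamma)}\big)=\prod_{j=1}^{s}t_j^{\langle p_j,u\rangle}$ for $u\in\gamma^\perp\cap S_\sigma$ and $0$ otherwise, and comparing with the description of $E_\gamma$ in Theorem~\ref{idem_thm_1}(3) identifies $\{\exp(\sum_r t_r\delta_r)\cdot x_{\cone(\tau,\gamma)}:t_j\in\KK^\times\}$ with $E_\gamma$, the only subtlety being that for $x\in E_\gamma$ the value $\chi^u(x)$ depends only on $(\langle p_1,u\rangle,\dots,\langle p_s,u\rangle)$ because $\chi^v(x)=1$ for all $v\in\cone(\tau,\gamma)^\perp$.
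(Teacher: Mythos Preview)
Your proof is correct and follows essentially the same argument as the paper's: both show $E_\gamma\subseteq H_\gamma\cdot x_{\cone(\tau,\gamma)}$ by the same induction along the chain of faces between $\gamma$ and $\cone(\tau,\gamma)$ (the paper phrases it as induction on~$s$, you as descending induction on~$|J|$), using $H_r$-connectedness of adjacent orbits together with Lemma~\ref{st:h-idem}, and then invoke closedness of unipotent-group orbits on an affine variety. One discrepancy worth flagging: the paper's text defines $H_r$ via the Demazure root \emph{not} lying in $\gamma^\perp$, whereas you take the one in $\gamma^\perp$; your convention is the one for which~\eqref{idempHe_lemeq} actually applies to the pair $(O_{\gamma_J},O_{\gamma_{J'}})$, and it is also what the paper's own proof implicitly uses when it writes $\ol{R_1x}=H_1x$ with the extra point in $O_{\cone(\gamma,p_1)}$, so this looks like a typo in the paper's definition rather than an error on your part.
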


\begin{proof} We use the induction on the number $s$ of primitive vectors $p_1, \ldots, p_s$ on the rays of the cone $\tau$ such that $p_1, \ldots, p_s \notin \gamma$, while $p_{s+1}, \ldots, p_k\in \gamma$. The base of induction $s = 0$ follows from item (1) of Theorem~\ref{idem_thm_1}. By the induction hypothesis $E_{\cone(\gamma, p_1)} \subseteq H_{\cone(\gamma, p_1)} x_{\cone(\tau, \gamma)}$. Take any $x \in E_\gamma$. Consider the $R_1$-orbit of the point $x$. Since $p_1 \notin \langle \gamma \rangle$, by Lemma~\ref{st:st} this orbit cannot consist of fixed points and therefore has dimension one. Recall that $\ol{R_1 x} = R_1 x \cup \{x'\} = H_1 x$, where $x' \in O_{\cone(\gamma, p_1)}$. By Lemma~\ref{st:h-idem}, $x'$ is an idempotent, and therefore lies in $H_{\cone(\gamma, p_1)} x_{\cone(\tau, \gamma)}$ by the induction hypothesis. Hence, we have $x = h_1 x' = h_1 h x_{\cone(\tau, \gamma)}$, where $h_1 \in H_1$, $h \in H_{\cone(\gamma, p_1)} = \langle H_2, \ldots, H_s \rangle$. Therefore, $E_\gamma \subseteq H_\gamma x_{\cone(\tau, \gamma)}$. Moreover, $H_\gamma x_{\cone(\tau, \gamma)}$ is an orbit of a unipotent group, hence closed, which implies $\ol{E_\gamma} \subseteq H_\gamma x_{\cone(\tau, \gamma)}$.
\end{proof}

\begin{lemma}\label{st:incl}
We have the inclusion $H_\gamma x_{\cone(\tau, \gamma)} \subseteq \bigcup_{ \{i_1, \ldots, i_r\} \subseteq  \{1, \ldots, s\}} E_{\cone(\gamma, p_{i_1}, \ldots, p_{i_r})}$. As a consequence, $\ol{E_\gamma} \subseteq \bigcup_{ \{i_1, \ldots, i_r\} \subseteq \{1, \ldots, s\}} E_{\cone(\gamma, p_{i_1}, \ldots, p_{i_r})}$.
\end{lemma}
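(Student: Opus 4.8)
The plan is to prove the inclusion $H_\gamma x_{\cone(\tau,\gamma)} \subseteq \bigcup_{\{i_1,\ldots,i_r\}\subseteq\{1,\ldots,s\}} E_{\cone(\gamma,p_{i_1},\ldots,p_{i_r})}$ by tracking how each generator $H_i$ moves points between torus orbits. First I would recall from~\eqref{Heorbit_eq} and the combinatorial criterion~\eqref{idempHe_lemeq} that, since the Demazure root $e^{(i)}$ defining $H_i$ lies in $\gamma^\perp$ (and more generally in $\beta^\perp$ for every face $\beta$ built from $\gamma$ and some of the $p_j$ not equal to $p_i$, because $\langle p_j, e^{(i)}\rangle = -\delta_{ij}$), the subgroup $H_i$ acts on the orbit $O_\beta$ by either fixing a point or moving it between $O_\beta$ and $O_{\cone(\beta,p_i)}$, whenever $p_i\notin\beta$. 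The starting point $x_{\cone(\tau,\gamma)}$ lies in $O_{\cone(\tau,\gamma)}=O_{\cone(\gamma,p_1,\ldots,p_s)}$, which is $E_{\cone(\tau,\gamma)}$ by item~(1) of Theorem~\ref{idem_thm_1} (here $\tau\subseteq\cone(\tau,\gamma)$).

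The key step is an induction on $s$, mirroring Lemma~\ref{st:cl_gam}. Write any element of $H_\gamma$ as $h_1 h'$ with $h_1\in H_1$ and $h'\in H_{\cone(\gamma,p_1)}=\langle H_2,\ldots,H_s\rangle$. By the induction hypothesis applied to the face $\cone(\gamma,p_1)$ — whose relevant ``missing'' rays are $p_2,\ldots,p_s$ — the point $h' x_{\cone(\tau,\gamma)}$ lies in some $E_{\cone(\gamma,p_1,p_{i_2},\ldots,p_{i_r})}$ with $\{i_2,\ldots,i_r\}\subseteq\{2,\ldots,s\}$; call this face $\beta$, so $\beta\ni p_1$. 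Now applying $h_1\in H_1$: since $p_1\in\langle\beta\rangle$, Lemma~\ref{st:st} shows $R_1$ fixes every point of $O_\beta$, so (by~\eqref{Heorbit_eq}) the whole orbit $H_1$ fixes $h'x_{\cone(\tau,\gamma)}$, giving $h_1 h' x_{\cone(\tau,\gamma)}=h' x_{\cone(\tau,\gamma)}\in E_\beta$. Alternatively, if one instead peels off the generators in the other order, one uses that $H_1$ sends $O_{\cone(\gamma,p_{i_2},\ldots,p_{i_r})}$ into $O_{\cone(\gamma,p_1,p_{i_2},\ldots,p_{i_r})}$ (when $p_1$ is not already a ray), and by Lemma~\ref{st:h-idem} idempotents go to idempotents; either way the image lands in a set of the form $E_{\cone(\gamma,p_{i_1},\ldots,p_{i_r})}$. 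Assembling, every point of $H_\gamma x_{\cone(\tau,\gamma)}$ lies in the asserted union.

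For the ``as a consequence'' clause, I would combine this inclusion with Lemma~\ref{st:cl_gam}, which gives $\ol{E_\gamma}\subseteq H_\gamma x_{\cone(\tau,\gamma)}$; chaining the two inclusions yields $\ol{E_\gamma}\subseteq\bigcup E_{\cone(\gamma,p_{i_1},\ldots,p_{i_r})}$ immediately. (Together with the reverse inclusion — each $E_{\cone(\gamma,p_{i_1},\ldots,p_{i_r})}$ is a $T$-orbit sitting in $\ol{O_\gamma}$ and accumulating onto lower strata — this will later give the equality in item~(1) of Theorem~\ref{idem_thm_2}, but that is a separate statement.)

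The main obstacle I anticipate is the bookkeeping in the induction: one must be careful that the inductive hypothesis for the face $\cone(\gamma,p_1)$ is legitimately applicable, i.e. that $\cone(\gamma,p_1)$ is itself a face of $\sigma$ (which follows from Lemma~\ref{lm:cone}, since $e^{(1)}\in\gamma^\perp$) and that the set of rays of $\tau$ missing from it is exactly $\{p_2,\ldots,p_s\}$, so that $E_{\cone(\gamma,p_1)}\neq\varnothing$ and the hypotheses of Theorem~\ref{idem_thm_1}(3) persist. One also needs that each $H_i$ preserves the property of lying over a face obtained from $\gamma$ by adjoining a subset of $\{p_1,\ldots,p_s\}$: this is exactly where the compatibility condition $\langle p_j,e^{(i)}\rangle=-\delta_{ij}$ is used, guaranteeing $e^{(i)}$ lies in the relevant $\beta^\perp$ whenever $p_i\notin\beta$, so that~\eqref{idempHe_lemeq} applies and no ``unexpected'' face appears.
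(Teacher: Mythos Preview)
Your proposal is essentially correct, but it takes a more roundabout route than the paper and has one step that needs justification.

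The paper's argument is direct and avoids induction entirely. It observes two things separately and combines them: (i) since $H_\gamma$ is generated by $H_1,\ldots,H_s$, any point of the orbit $H_\gamma x_{\cone(\tau,\gamma)}$ is obtained from $x_{\cone(\tau,\gamma)}$ by successively applying elements of the~$H_i$; by~\eqref{idempHe_lemeq} each such application keeps the point inside the union $\bigcup O_{\cone(\gamma,p_{i_1},\ldots,p_{i_r})}$ (the only $H_i$-connected pairs among these orbits differ by adding or removing~$p_i$); (ii) by Lemma~\ref{st:h-idem} every point of $H_\gamma x_{\cone(\tau,\gamma)}$ is an idempotent. Intersecting (i) and (ii) gives the inclusion into $\bigcup E_{\cone(\gamma,p_{i_1},\ldots,p_{i_r})}$. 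The consequence then follows from Lemma~\ref{st:cl_gam}, exactly as you say.

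Your inductive approach works too, but the step ``write any element of $H_\gamma$ as $h_1 h'$ with $h_1\in H_1$, $h'\in\langle H_2,\ldots,H_s\rangle$'' is not free: it asserts $H_\gamma = H_1\cdot\langle H_2,\ldots,H_s\rangle$, which you have not justified. It is true, because the $H_i$ pairwise commute---each $H_i$ is either left or right multiplication by a one-parameter subgroup of the commutative unipotent radical $\G_a^k\subseteq\GG$, and left multiplications commute with right multiplications by associativity---so $H_\gamma\cong\G_a^s$. You should state this. Note also that your argument for ``$H_1$ fixes points of $O_\beta$ when $p_1\in\beta$'' via Lemma~\ref{st:st} and~\eqref{Heorbit_eq} needs the connectedness of $H_1$-orbits to conclude; the cleaner route is to check directly from~\eqref{idempHe_lemeq} that $O_\beta$ is not part of any $H_{e^{(1)}}$-connected pair (since $e^{(1)}\notin\gamma^\perp$ implies $e^{(1)}\notin\beta'^\perp$ for every face $\beta'\supseteq\gamma$). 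Your ``alternative'' paragraph is in fact much closer to the paper's argument and would suffice on its own without the induction.
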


\begin{proof} Recall that $H_\gamma$ is generated by the one-dimensional unipotent groups $H_1, \ldots, H_s$. Moreover, any orbit of $H_i$ passes through exactly two $\TT$-orbits, see~\eqref{idempHe_lemeq}. Since we are interested in the orbit of the point $x_{\cone(\tau, \gamma)}$, the set $H_\gamma x_{\cone(\tau, \gamma)}$ may contain only points from orbits of the form $ O_{\cone(\gamma, p_{i_1}, \ldots, p_{i_r})}$, where $ \{i_1, \ldots, i_r\} \subseteq \{1, \ldots, s\}$. 
It follows that we have the inclusion $H_\gamma x_{\cone(\tau, \gamma)} \subseteq \bigcup_{ \{i_1, \ldots, i_r\} \subseteq \{1, \ldots, s\}} O_{\cone(\gamma, p_{i_1}, \ldots, p_{i_r})}$. Moreover, by Lemma~\ref{st:h-idem}, $H_\gamma x_{\cone(\tau, \gamma)}$ consists of idempotents. Hence $H_\gamma x_{\cone(\tau, \gamma)} \subseteq \bigcup_{\{i_1, \ldots, i_r\} \subseteq \{1, \ldots, s\}} E_{\cone(\gamma, p_{i_1}, \ldots, p_{i_r})}$.
	
By Lemma~\ref{st:cl_gam} we have $\ol{E_\gamma} \subseteq H_\gamma x_{\cone(\tau, \gamma)}$, which proves the second statement.
\end{proof}

\begin{lemma}\label{one_orb}
The set $E_\gamma$ is a single $R_\tau$-orbit.
\end{lemma}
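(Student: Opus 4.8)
The plan is to show that $R_\tau = T$ acts transitively on $E_\gamma$. By Lemma~\ref{invar}, $E_\gamma$ is $R_\tau$-invariant, and $x_\gamma \in E_\gamma$, so it suffices to prove $E_\gamma \subseteq R_\tau x_\gamma$. By Theorem~\ref{idem_thm_1}(3), after swapping the roles of $e_1^{(r)}$ and $e_2^{(r)}$ as in the proof of that theorem so that $e_2^{(r)} \in \gamma^\perp$ and $e_1^{(r)} \notin \gamma^\perp$ for $r = 1, \ldots, s$, the set $E_\gamma$ consists of the points $x \in O_\gamma$ with $\chi^u(x) = 1$ for every $u \in \cone(\tau, \gamma)^\perp \cap S_\sigma$. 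Since $x_\gamma \in O_\gamma$ too, both $x$ and $x_\gamma$ lie in the same torus orbit, and the $\TT$-action on $O_\gamma$ is simply transitive once we identify $O_\gamma$ with $\Hom(\gamma^\perp \cap M, \KK^\times)$; so there is a unique $t \in \TT$ (modulo the stabilizer) with $t x_\gamma = x$. The goal is to show this $t$ can be taken in the subtorus $R_\tau = \langle R_1, \ldots, R_k\rangle$.

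First I would set up coordinates on $O_\gamma$: fixing a basis of $\gamma^\perp \cap M$, the value $\chi^u(x)$ for $u \in \gamma^\perp \cap M$ is determined, and $x \in E_\gamma$ imposes exactly the linear conditions ``$\chi^u(x) = 1$ for $u \in \cone(\tau,\gamma)^\perp \cap M$.'' So the freedom in an element of $E_\gamma$ is parametrized by a character group quotient: $E_\gamma$ is (noncanonically) a torus of dimension $\dim(\gamma^\perp) - \dim(\cone(\tau,\gamma)^\perp) = \dim\cone(\tau,\gamma) - \dim\gamma = s$, the number of rays $p_1, \ldots, p_s$ of $\tau$ not in $\gamma$. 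On the other side, $R_\tau$ acts on $O_\gamma$ through its quotient by the pointwise stabilizer of $O_\gamma$, which by Lemma~\ref{st:st} is the subtorus generated by those $R_i$ with $p_i \in \langle\gamma\rangle$, i.e. (since $\tau$ is regular and $p_{s+1}, \ldots, p_k \in \gamma$) by $R_{s+1}, \ldots, R_k$. Hence $R_\tau$ acts on $O_\gamma$ through an $s$-dimensional torus, namely the image of $\langle R_1, \ldots, R_s\rangle$.

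The key computation is then to check that the $R_\tau$-action and the space $E_\gamma$ ``match up'': concretely, that the orbit map $R_\tau \to O_\gamma$, $t \mapsto t x_\gamma$, has image exactly $E_\gamma$. One inclusion, $R_\tau x_\gamma \subseteq E_\gamma$, is Lemma~\ref{invar}. For the reverse, I would argue that both $R_\tau x_\gamma$ and $E_\gamma$ are irreducible closed (in $O_\gamma$) subsets of dimension $s$: for $R_\tau x_\gamma$, it is a torus orbit, closed in $O_\gamma$, of dimension equal to the rank of the character lattice $\{$ restrictions to $R_\tau$ of $\chi^u$, $u\in\gamma^\perp\}$, and the kernel of $R_i$ on $O_\gamma$ being the conditions $\langle p_i, u\rangle = 0$, one sees this dimension is the number of linearly independent $\langle p_i, \cdot\rangle|_{\gamma^\perp}$ for $i=1,\ldots,k$, which is $s$ because $p_1,\ldots,p_s$ are independent mod $\langle\gamma\rangle$ while $p_{s+1},\ldots,p_k \in \langle\gamma\rangle$. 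Since $R_\tau x_\gamma \subseteq E_\gamma$, both have dimension $s$, and $E_\gamma$ is irreducible (it is cut out in the orbit by the multiplicative relations $\chi^u = 1$, i.e. it is a coset of a subtorus, hence irreducible), equality follows. Therefore $E_\gamma = R_\tau x_\gamma$ is a single $R_\tau$-orbit.

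The main obstacle I anticipate is the bookkeeping of the two dimension counts and making sure they agree without hidden torsion or saturation subtleties: one must verify that $E_\gamma$ really is a full coset of a subtorus of $O_\tau$ (not a proper subset thereof) and that $R_\tau x_\gamma$ genuinely has dimension $s$ and not less. Both hinge on the regularity of $\tau$ — so that $p_1, \ldots, p_k$ are part of a lattice basis and the relevant quotient lattices are torsion-free — and on the precise description of $\cone(\tau,\gamma)^\perp$ coming from Lemma~\ref{lm:cone}. Once the dimension equality $\dim R_\tau x_\gamma = \dim E_\gamma = s$ and irreducibility of $E_\gamma$ are in hand, the conclusion is immediate from the inclusion in Lemma~\ref{invar}.
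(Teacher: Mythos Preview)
Your approach is correct and genuinely different from the paper's. The paper argues by induction on~$s$: given $x,y\in E_\gamma$, it passes to the limit points $x',y'$ of the $R_1$-orbits inside $O_{\cone(\gamma,p_1)}$ using the $H_e$-orbit structure from~\cite{AKZ2012} (namely $H_1x = R_1x\cup\{x'\}$), applies the induction hypothesis to relate $x'$ and $y'$ by an element of $R_\tau$, and then unwinds using normalization of $H_1$ by $T$. Your argument is instead a direct dimension count: you identify $E_\gamma$ with the subtorus of $O_\gamma\cong\Hom(\gamma^\perp\cap M,\KK^\times)$ cut out by $\chi^u=1$ for $u\in\cone(\tau,\gamma)^\perp\cap M$, observe this subtorus is connected of dimension~$s$ (since $\cone(\tau,\gamma)^\perp\cap M$ is automatically saturated in $\gamma^\perp\cap M$ and has corank~$s$ there by Lemma~\ref{lm:perpcone}), and match it against the $R_\tau$-orbit of $x_\gamma$, whose dimension is also~$s$ because the $u_r$ from Lemma~\ref{lm:perpcone} witness that $R_1,\ldots,R_s$ act with trivial joint stabilizer on $O_\gamma$.

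What each buys: your route is more elementary in that it never invokes the $H_e$-connected-orbit description from~\cite{AKZ2012} and gives the dimension of $E_\gamma$ explicitly; it leans instead on Lemmas~\ref{lm:cone} and~\ref{lm:perpcone} and on the explicit description in Theorem~\ref{idem_thm_1}(3). The paper's inductive argument is more geometric and, importantly, feeds directly into the surrounding lemmas (\ref{st:cl_gam}, \ref{st:incl2}) that already use the $H_i$-orbit machinery, so it keeps the toolkit uniform across Section~\ref{sec:idemp_geom}. One point worth making explicit in your write-up: the semigroup $\cone(\tau,\gamma)^\perp\cap S_\sigma$ generates the full lattice $\cone(\tau,\gamma)^\perp\cap M$ (take any $u$ in that lattice and add a large multiple of a lattice point in the relative interior of $\cone(\tau,\gamma)^\perp\cap\sigma^\vee$), so the equations defining $E_\gamma$ really do cut out the full subtorus and not a finite cover of it.
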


\begin{proof}
We prove this by induction on the number $s$ of primitive vectors $p_1, \ldots, p_s$ on the rays of the cone $\tau$, such that $p_1, \ldots, p_s \notin \gamma$, while $p_{s+1}, \ldots, p_k \in \gamma$. The base of induction $s = 0$ follows from item (1) of Theorem~\ref{idem_thm_1}. By the induction hypothesis, all idempotents in the orbit $O_{\cone(\gamma, p_1)}$ form a single $R_\tau$-orbit. Take arbitrary $x, y \in E_\gamma$. Consider the $R_1$-orbits of the points $x$ and $y$. Since $p_1 \notin \langle \gamma \rangle$, it follows from Lemma~\ref{st:st} that these orbits cannot consist of fixed points and therefore are one-dimensional. Recall that by the results of~\cite{AKZ2012} we have $\ol{R_1 x} = R_1 x \cup \{x'\} = H_1 x$, where $x' \in O_{\cone(\gamma, p_1)}$. Similarly, define $y' \in O_{\cone(\gamma, p_1)}$. By Lemma~\ref{st:h-idem}, $x', y'$ are idempotents, and hence lie in the same $R_\tau$-orbit by the induction hypothesis. Thus $y = h_1 t h_2 x$, where $h_1, h_2 \in H_1$, $t \in R_\tau$. Recall that $H_1$ is normalized by the torus $T \subseteq \TT$, i.e. $t^{-1} h_1 t = h_1' \in H_1$. Hence, $y = t t^{-1} h_1 t h_2 x \in tH_1x$,
so $t^{-1} y \in H_1 x$. On the other hand, $t^{-1} y \in O_\gamma$, since $t^{-1} \in R_\tau \subseteq \TT$, and $O_\gamma$ is a $\TT$-orbit. Therefore, $t^{-1} y \in H_1 x \cap O_\gamma = R_1x$. That is, $t^{-1} y = t_1 x$ for some $t_1 \in R_1 \subseteq R_\tau$. It follows that $y = t t_1 x$, where $t, t_1 \in R_\tau$.

$$
\begin{tikzpicture}
	\draw[thick, rounded corners=10pt] (0,0) rectangle (5, 4);
	\draw[thick, rounded corners=10pt] (5.2,0) rectangle (6.2, 4);
	\draw (2.5, -0.5) node{$O_\gamma$};
	\draw (6.3, -0.5) node{$O_{\cone(\gamma, \tau)}$};
	
	\draw[thick, gray!50!white] (0.5, 1) node[above]{$R_1x$} -- (4.5, 1);
	\draw[thick, gray!50!white] (0.5, 3) node[below]{$R_1y$} -- (4.5, 3);
	
	\draw [thick,->, orange] (3.7, 1) --node[above]{$h_2$} (5.5, 1);
	\draw [thick,->, orange] (5.7, 1.2) --node[right]{$t$} (5.7, 2.8);
	\draw [thick,->, orange] (5.5, 3) -- node[below]{$h_1$}(2.2, 3);
	\draw [thick,->, orange] (2, 2.8) --node[left]{$t^{-1}$} (2, 1.2);
	
	\filldraw (3.5, 1) node[below]{$x$} circle (2pt);
	\filldraw (5.7, 1) node[below]{$x'$} circle (2pt);
	\filldraw (2, 3) node[above]{$y$} circle (2pt);
	\filldraw (5.7, 3) node[above]{$y'$} circle (2pt);
	\filldraw (2, 1) node[below]{$t^{-1}y$} circle (2pt);
\end{tikzpicture}
$$
\end{proof}

\begin{lemma}\label{st:incl2}
We have the inclusion $E_{\cone(\gamma, p_{i_1}, \ldots, p_{i_r})} \subseteq \ol{E_\gamma}$.
\end{lemma}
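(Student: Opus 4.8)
The plan is to reduce the lemma to its one-ray special case and then prove that case by following the limit of a one-parameter subgroup, in the spirit of Lemmas~\ref{st:cl_gam} and~\ref{one_orb}. First I would record the statement $(\star)$: if $E_\beta\neq\varnothing$ and $p_i$ is a ray of $\tau$ with $p_i\notin\beta$, then $E_{\cone(\beta,p_i)}\subseteq\ol{E_\beta}$. (These hypotheses already force $\beta$ to be of type~(3) in Theorem~\ref{idem_thm_1}, since type~(1) would give $\tau\subseteq\beta$, contradicting $p_i\notin\beta$.) Granting $(\star)$, the lemma follows by induction on $r$: the case $r=0$ is trivial, and for the step I set $\beta=\cone(\gamma,p_{i_1},\dots,p_{i_{r-1}})$, apply $(\star)$ to $\beta$ and the ray $p_{i_r}\notin\beta$, and combine $E_{\cone(\beta,p_{i_r})}\subseteq\ol{E_\beta}$ with the inductive hypothesis $E_\beta\subseteq\ol{E_\gamma}$ via $\ol{E_\beta}\subseteq\ol{\ol{E_\gamma}}=\ol{E_\gamma}$. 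The bookkeeping needed here is that $\beta$ is again a face (Lemma~\ref{lm:cone}) with $E_\beta\neq\varnothing$ of type~(3): the rays of $\tau$ not contained in $\beta$ form a sub-collection of $p_1,\dots,p_s$, and for each such $p_j$ the Demazure root from the pair $e_1^{(j)},e_2^{(j)}$ lying in $\gamma^\perp$ still lies in $\beta^\perp$, since its pairing with the added rays $p_{i_l}$ vanishes by compatibility with $\tau$, whereas the other one stays outside $\beta^\perp\subseteq\gamma^\perp$.

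To prove $(\star)$: by Lemma~\ref{lm:cone} applied to the single ray $p_i$, the cone $\cone(\beta,p_i)$ is a face of $\sigma$ strictly containing $\beta$, so $p_i\notin\langle\beta\rangle$. Pick any $x\in E_\beta\subseteq O_\beta$. By Lemma~\ref{st:st} the point $x$ is not fixed by $R_i=R_{p_i}$, so its $R_i$-orbit is one-dimensional; and using the Demazure root of $p_i$ lying in $\beta^\perp$ --- that is $e_2^{(i)}$, after the normalization of Theorem~\ref{idem_thm_1}(3), so that $(O_\beta,O_{\cone(\beta,p_i)})$ is a connected pair in the sense of~\eqref{idempHe_lemeq} --- the orbit closure satisfies $\ol{R_i x}=R_i x\cup\{x'\}$ with $x'\in O_{\cone(\beta,p_i)}$, by~\eqref{Heorbit_eq}. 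Since $R_i\subseteq\TT$ preserves $O_\beta$ and, by Lemma~\ref{invar}, preserves idempotents, the orbit $R_i x$ lies in $E_\beta$; as the set of idempotents is closed, $x'$ is an idempotent, and therefore $x'\in E_{\cone(\beta,p_i)}$, while at the same time $x'\in\ol{R_i x}\subseteq\ol{E_\beta}$.

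It remains to upgrade "$E_{\cone(\beta,p_i)}$ meets $\ol{E_\beta}$" to "$E_{\cone(\beta,p_i)}\subseteq\ol{E_\beta}$" by homogeneity. Since $E_{\cone(\beta,p_i)}\neq\varnothing$, Lemma~\ref{one_orb} applied to the face $\cone(\beta,p_i)$ shows that $E_{\cone(\beta,p_i)}$ is a single $R_\tau$-orbit; and $\ol{E_\beta}$ is $R_\tau$-invariant, because $E_\beta$ is $R_\tau$-invariant (Lemma~\ref{invar}) and the closure of an invariant set is invariant. Hence $E_{\cone(\beta,p_i)}=R_\tau x'\subseteq R_\tau\,\ol{E_\beta}=\ol{E_\beta}$, which gives $(\star)$, and with it the lemma. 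The step I expect to be the main obstacle is the precise identification of the extra limit point $x'$: one has to make sure that the degeneration along the torus subgroup $R_{p_i}$ is controlled by the Demazure root of $p_i$ lying in $\beta^\perp$, not by the other one, so that $x'$ really lands in $O_{\cone(\beta,p_i)}$ and is available for the homogeneity argument; everything else is a citation of an earlier lemma or the elementary root bookkeeping above.
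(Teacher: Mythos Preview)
Your proposal is correct and follows essentially the same route as the paper: reduce to the one-ray inclusion $E_{\cone(\gamma, p_{i_1})} \subseteq \ol{E_\gamma}$, combine the fact that $E_{\cone(\gamma, p_{i_1})}$ is a single $R_\tau$-orbit (Lemma~\ref{one_orb}) with the $R_\tau$-invariance of $\ol{E_\gamma}$ (Lemma~\ref{invar}), and then iterate along the chain of faces. The only difference is that you explicitly produce a witness $x' \in \ol{E_\beta} \cap E_{\cone(\beta, p_i)}$ via the $R_i$-limit (exactly as in the proofs of Lemmas~\ref{st:cl_gam} and~\ref{one_orb}), whereas the paper's deduction that $\ol{E_\gamma} \cap O_{\cone(\gamma, p_{i_1})} = E_{\cone(\gamma, p_{i_1})}$ leaves the nonemptiness of this intersection implicit.
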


\begin{proof}
First prove $E_{\cone(\gamma, p_{i_1})} \subseteq \ol{E_\gamma}$. Since by Lemma~\ref{invar} the set $E_\gamma$ is invariant under the action of $R_\tau$, its closure $\ol{E_\gamma}$ is also $R_\tau$-invariant. Moreover, the orbit $O_{\cone(\gamma, p_{i_1})}$ is also $R_\tau$-invariant, since $R_\tau$ is a subtorus of $\TT$. Therefore, the set $\ol{E_\gamma} \cap O_{\cone(\gamma, p_{i_1})}$ is $R_\tau$-invariant, i.e. it is a union of $R_\tau$-orbits. On the other hand, $\ol{E_\gamma} \cap O_{\cone(\gamma, p_{i_1})} \subseteq E_{\cone(\gamma, p_{i_1})}$, since $E_{\cone(\gamma, p_{i_1})}$ contains all idempotents lying in the orbit $O_{\cone(\gamma, p_{i_1})}$. Since $E_{\cone(\gamma, p_{i_1})}$ consists of exactly one $R_\tau$-orbit by Lemma~\ref{one_orb}, we obtain the equality $\ol{E_\gamma} \cap O_{\cone(\gamma, p_{i_1})} = E_{\cone(\gamma, p_{i_1})}$. Thus $E_{\cone(\gamma, p_{i_1})} \subseteq  \ol{E_\gamma}$. Therefore, $E_{\cone(\gamma, p_{i_1}, \ldots, p_{i_r})} \subseteq \ol{E_{\cone(\gamma, p_{i_1}, \ldots, p_{i_r})}} \subseteq \ldots  \subseteq \ol{E_{\cone(\gamma, p_{i_1}, p_{i_2})}} \subseteq \ol{E_{\cone(\gamma, p_{i_1})}} \subseteq  \ol{E_\gamma}$. 
\end{proof}

\begin{proof}[Proof of Theorem~\ref{idem_thm_2}]
Recall that the condition $E_\gamma \neq \varnothing$ holds for cases (1) and (3) of Theorem~\ref{idem_thm_1}, with case (1) being a special case of (3).  
Item (1) follows directly from Lemmas~\ref{st:incl} and~\ref{st:incl2}. Next, note that $x_\gamma \in E_\gamma$ by item (3) of Theorem~\ref{idem_thm_1}, and item (2) of the theorem follows from Lemma~\ref{one_orb}. Finally, item (3) of the theorem follows from Lemmas~\ref{st:cl_gam}, \ref{st:incl}, and item (1) of the theorem.   
\end{proof}

\section{The center of an active monoid}\label{sec:center}

Denote the center of the monoid by $Z(X) = \{x \in X \mid \forall y \in X\;\; x*y = y*x\}$. The group $G$ of invertible elements is open and dense in $X$, hence the center can be described as follows: \[Z(X) = \{x \in X \mid \forall y \in G \;\;x = y*x *y^{-1}\}.\] 

Let $X$ be a root monoid with the group of invertible elements $\GG$. Recall the notation introduced above: $\ol{p} = (p_1, \ldots, p_k)$ is the set of primitive vectors on the rays of the cone $\tau$, $\langle \ol{p}, u \rangle = (\langle p_1, u \rangle, \ldots, \langle p_k, u \rangle)$, $\ol{i} = (i_1, \ldots, i_k) \in \ZZ^k$, and ${ \langle \ol{p}, u \rangle \choose \ol{i} }  =  { \langle p_1, u \rangle \choose i_1} \cdot \ldots \cdot  { \langle p_k, u \rangle \choose i_k}$. Additionally, denote $(-1)^{\ol{i}} := (-1)^{i_1 + \ldots + i_k}$.

\begin{lemma}
Let $y \in \GG$. Then the inverse element $y^{-1}$ is determined by the conditions
\begin{equation}\label{eq_inv}
\chi^u(y^{-1}) = (-1)^{\langle \ol{p},u\rangle}
\chi^{-u - \langle \ol{p}, u \rangle (\ol{e_1} + \ol{e_2})}(y) \text{ for all } u\in S_\tau,
\end{equation}
where $\langle \ol{p}, u \rangle (\ol{e_1} + \ol{e_2}) = \sum \limits_{r = 1}^{k} \langle p_r, u \rangle (e_1^{(r)} + e_2^{(r)})$.
\end{lemma}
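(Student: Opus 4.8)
The plan is to write down the point $z\in\GG$ whose character values are the right-hand sides of~\eqref{eq_inv}, to check that $z$ really is a point of $\GG$, to verify $z*y = x_\tau$, and then to conclude $z = y^{-1}$ from uniqueness of inverses in the group $\GG$; since the characters $\chi^u$, $u\in S_\tau$, span $\KK[\GG]$ and separate points, this is precisely the claim that $y^{-1}$ is determined by~\eqref{eq_inv}. To lighten notation, put $\iota(u) = -u - \sum_{r=1}^{k}\langle p_r,u\rangle\,(e_1^{(r)}+e_2^{(r)})$, so the right-hand side of~\eqref{eq_inv} is $(-1)^{\langle\ol{p},u\rangle}\chi^{\iota(u)}(y)$.

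First I would check that~\eqref{eq_inv} defines a point $z$ of $\GG = X_\tau$, that is, a semigroup homomorphism $S_\tau\to\KK$. Because the Demazure roots are compatible with $\tau$, i.e. $\langle p_s,e_1^{(r)}\rangle = \langle p_s,e_2^{(r)}\rangle = -\delta_{rs}$, one gets $\langle p_s,\iota(u)\rangle = -\langle p_s,u\rangle + 2\langle p_s,u\rangle = \langle p_s,u\rangle\ge 0$ for every $u\in S_\tau$ and every $s = 1,\dots,k$. Since $S_\tau = \tau^\vee\cap M = \{w\in M : \langle p_r,w\rangle\ge 0,\ r = 1,\dots,k\}$ (as $\tau = \cone(p_1,\dots,p_k)$, cf. Section~\ref{1sec:orb}), the map $\iota$ sends $S_\tau$ into itself; moreover $\iota$ and $u\mapsto\langle\ol{p},u\rangle$ are additive, so the assignment $\chi^u\mapsto(-1)^{\langle\ol{p},u\rangle}\chi^{\iota(u)}(y)$ is multiplicative in $u$, hence a semigroup homomorphism $S_\tau\to\KK$ and thus a well-defined point $z\in X_\tau = \GG$.

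Next I would verify $z*y = x_\tau$ by expanding $\chi^u(z*y)$ with the comultiplication in the form~\eqref{1fin_mult}:
\begin{equation*}
	\chi^u(z*y) = \sum_{\ol{i}+\ol{j}=\langle\ol{p},u\rangle}\binom{\langle\ol{p},u\rangle}{\ol{i}}\,\chi^{u+\ol{i}\cdot\ol{e_2}}(z)\,\chi^{u+\ol{j}\cdot\ol{e_1}}(y).
\end{equation*}
Since $\langle p_r,u+\ol{i}\cdot\ol{e_2}\rangle = \langle p_r,u\rangle - i_r = j_r$, the definition of $z$ gives $\chi^{u+\ol{i}\cdot\ol{e_2}}(z) = (-1)^{\ol{j}}\chi^{-u-\langle\ol{p},u\rangle\cdot\ol{e_2}-\ol{j}\cdot\ol{e_1}}(y)$, where $\langle\ol{p},u\rangle\cdot\ol{e_2} = \sum_r\langle p_r,u\rangle e_2^{(r)}$; multiplying by $\chi^{u+\ol{j}\cdot\ol{e_1}}(y)$, the $y$-factor telescopes to $\chi^{-\langle\ol{p},u\rangle\cdot\ol{e_2}}(y)$, independent of $\ol{i}$ and of $\ol{j}$. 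Pulling this constant out,
\begin{equation*}
	\chi^u(z*y) = \chi^{-\langle\ol{p},u\rangle\cdot\ol{e_2}}(y)\prod_{r=1}^{k}\sum_{i_r+j_r=\langle p_r,u\rangle}\binom{\langle p_r,u\rangle}{i_r}(-1)^{j_r} = \chi^{-\langle\ol{p},u\rangle\cdot\ol{e_2}}(y)\prod_{r=1}^{k}(1-1)^{\langle p_r,u\rangle}.
\end{equation*}
The product equals $1$ when $\langle p_r,u\rangle = 0$ for all $r$, i.e. $u\in\tau^\perp$ (in which case also $\langle\ol{p},u\rangle\cdot\ol{e_2} = 0$, so $\chi^u(z*y) = 1$), and $0$ otherwise; comparing with $\chi^u(x_\tau)$ gives $\chi^u(z*y) = \chi^u(x_\tau)$ for all $u\in S_\tau$, whence $z*y = x_\tau$. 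Therefore $z = y^{-1}$, and reading off $\chi^u(y^{-1}) = \chi^u(z)$ yields~\eqref{eq_inv}.

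The only delicate point is the degree bookkeeping inside $\iota$ when it is applied to $u+\ol{i}\cdot\ol{e_2}$: one must use $\langle p_r,u+\ol{i}\cdot\ol{e_2}\rangle = j_r$ (not $\langle p_r,u\rangle$), which is exactly what turns the sign into $(-1)^{\ol{j}}$ and makes the sum over $\ol{i}$ factor as $\prod_r(1-1)^{\langle p_r,u\rangle}$ via $\sum_{i+j=m}\binom{m}{i}(-1)^j = (1-1)^m$. An equivalent but less transparent route, which I would mention but not carry out, is to write the group inverse directly as $(\ol{\alpha}\cdot t)^{-1} = (-\ol{\chi}(t)^{-1}\ol{\alpha})\cdot t^{-1}$ from~\eqref{def:mult} and evaluate each character $\chi^u$ on it.
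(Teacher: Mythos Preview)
Your proof is correct and follows essentially the same approach as the paper's: define the candidate point $z$ by the right-hand side of~\eqref{eq_inv}, then expand $\chi^u$ of the product via the comultiplication~\eqref{1fin_mult}, telescope the $y$-factors to a constant independent of the summation index, and reduce the remaining sum to a product of alternating binomial sums $(1-1)^{\langle p_r,u\rangle}$. The only cosmetic differences are that the paper verifies $y*z=x_\tau$ rather than $z*y=x_\tau$ (so its constant is $\chi^{-\langle\ol{p},u\rangle\ol{e_1}}(y)$ and its sign is $(-1)^{\ol{i}}$), and that you are more explicit about why $\iota(S_\tau)\subseteq S_\tau$, which the paper merely asserts.
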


\begin{proof}
Note that formula~\eqref{eq_inv} defines a certain semigroup homomorphism $S_\tau \to \KK$, i.e. a point $z \in \GG$. We need to check that $z = y^{-1}$, that is, $y*z = z*y = x_\tau$. Recall that
\begin{equation}\label{mm}
	\chi^u (y * z) = \sum \limits_{\ol{i} + \ol{j} = \langle \ol{p}, u \rangle } { \langle \ol{p}, u \rangle \choose \ol{i} } \chi^{u + \ol{i}\cdot \ol{e_2}}(y) \cdot  \chi^{u + \ol{j}\cdot \ol{e_1}} (z).
\end{equation}
Denote $u' = u + \ol{j}\cdot \ol{e_1} = u + j_1 e_1^{(1)} + j_2 e_1^{(2)} + \cdots + j_k e_1^{(k)}$. Then $\langle p_r, u' \rangle = \langle p_r, u \rangle - j_r = i_r$ since $\ol{i} + \ol{j} = \langle \ol{p}, u \rangle$. It follows that $-u' - \langle \ol{p}, u' \rangle (\ol{e_1} + \ol{e_2}) = -u -\ol{i}\ol{e_2} - \langle \ol{p}, u \rangle\ol{e_1}$. Using these equalities, compute $\chi^u(y * z)$ by formula~\eqref{mm}, substituting $u'$ instead of $u$ in~\eqref{eq_inv} which defines $z$:
\begin{align*}
 	&\sum \limits_{\ol{i} + \ol{j} = \langle \ol{p}, u \rangle } { \langle \ol{p}, u \rangle \choose \ol{i} } \chi^{u + \ol{i}\cdot \ol{e_2}}(y) \cdot  \chi^{u + \ol{j}\cdot \ol{e_1}} (z) 
 	= \sum \limits_{\ol{i} + \ol{j} = \langle \ol{p}, u \rangle } { \langle \ol{p}, u \rangle \choose \ol{i} } \chi^{u + \ol{i}\cdot \ol{e_2}}(y) \cdot  (-1)^{\ol{i}} \chi^{-u -\ol{i}\ol{e_2} - \langle \ol{p}, u \rangle\ol{e_1}}(y)  = \\
 	&=\sum \limits_{\ol{i} + \ol{j} = \langle \ol{p}, u \rangle } { \langle \ol{p}, u \rangle \choose \ol{i} } \cdot  (-1)^{\ol{i}} \chi^{-u -\ol{i}\ol{e_2} - \langle \ol{p}, u \rangle\ol{e_1} + u + \ol{i}\cdot \ol{e_2}}(y) = 
 	\chi^{-\langle \ol{p}, u \rangle\ol{e_1} }(y) \sum \limits_{\ol{i} + \ol{j} = \langle \ol{p}, u \rangle } { \langle \ol{p}, u \rangle \choose \ol{i} } \cdot  (-1)^{\ol{i}}.
 \end{align*}

We obtain
 \begin{equation}
 	\chi^u (y * z)  = \chi^{-\langle \ol{p}, u \rangle\ol{e_1} }(y) \sum \limits_{\ol{i} + \ol{j} = \langle \ol{p}, u \rangle} {\langle \ol{p}, u \rangle \choose \ol{i} } \cdot (-1)^{\ol{i}}.
 \end{equation} 

Recall that the neutral element $x_\tau$ is the distinguished point for the cone $\tau$, namely
\begin{equation}
	\begin{cases} 
			\chi^u(x_\tau) = 1, \text{ if } u \in \tau^\perp\\
		\chi^u(x_\tau) = 0 \text{ otherwise} .
	\end{cases}
\end{equation} 

First, check that $\chi^u (y * z)  = 1$ if $u \in \tau^\perp$. In this case $\langle \ol{p}, u \rangle = (0, \ldots, 0)$, which follows that $\chi^{-\langle \ol{p}, u \rangle\ol{e_1} }(y) = \chi^0(y) = 1$, and the sum reduces to a single nonzero term corresponding to $\ol{i} = \ol{0}$, equal to $1$.  

Now check that $\chi^u (y * z)  = 0$ if $u \notin \tau^\perp$. In this case there exists $r$ with $\langle p_r, u \rangle \neq 0$. Let us prove that
\begin{equation}
	\sum \limits_{\ol{i} + \ol{j} = \langle \ol{p}, u \rangle } { \langle \ol{p}, u \rangle \choose \ol{i} } \cdot  (-1)^{\ol{i}} = 0.
\end{equation}
Indeed,
\begin{gather*}
\sum \limits_{\ol{i} + \ol{j} = \langle \ol{p}, u \rangle } { \langle \ol{p}, u \rangle \choose \ol{i} } \cdot  (-1)^{\ol{i}} = \sum \limits_{\ol{i} + \ol{j} = \langle \ol{p}, u \rangle } { \langle p_1, u \rangle \choose i_1 } \cdot  (-1)^{i_1} \ldots { \langle p_k, u \rangle \choose i_k } \cdot  (-1)^{i_k} =\\= \prod \limits_{s = 1}^{k} \biggl(\sum \limits_{i_s + j_s = \langle p_s, u \rangle } { \langle p_s, u \rangle \choose i_s } \cdot  (-1)^{i_s}\biggr).
\end{gather*}
Consider the factor $\sum \limits_{i_r + j_r = \langle p_r, u \rangle} {\langle p_r, u \rangle \choose i_r} \cdot (-1)^{i_r}$, for which $\langle p_r, u \rangle \neq 0$. Note that this is an alternating sum of binomial coefficients, which equals $0$. Hence the entire product vanishes.
\end{proof}

The following theorem provides explicit equations for the center of an active monoid.

\begin{theorem} \label{thm:z}
In the notation of Theorem~\ref{thm:act}, the center of the active monoid $X$ is described as follows:
$$Z(X) = \ol{O_\tau} \cap \{\chi^{u + e_1^{(r)}} =  \chi^{u + e_2^{(r)}} \mid (r,u) \in I\},$$ 
where $I = \{(r,u) \mid r = 1, \ldots, k, \, u \in S_\sigma: \; \langle p_j, u \rangle = \delta_{jr} \text{ for all } j = 1, \ldots, k\}$.
\end{theorem}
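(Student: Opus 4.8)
The plan is to work entirely on the level of semigroup homomorphisms $S_\sigma \to \KK$, using the comultiplication formula \eqref{thm:finalcom}. Recall that $x \in X$ lies in the center if and only if $\chi^u(x*y) = \chi^u(y*x)$ for every $y \in \GG$ and every $u \in S_\sigma$. Expanding both sides via \eqref{1fin_mult} and comparing, the condition becomes a family of polynomial identities in the values $\chi^v(x)$ and $\chi^w(y)$. First I would examine which $u$ give obstructions: for $u$ with $\langle p_r, u\rangle = 0$ for all $r$ the comultiplication is just $\chi^u \otimes \chi^u$, so $x*y$ and $y*x$ already agree; the first genuine constraint comes from $u$ with $\langle \ol p, u\rangle$ a single standard basis vector, i.e.\ $(r,u) \in I$, where
\[
\chi^u(x*y) = \chi^u(x)\chi^{u+e_1^{(r)}}(y) + \chi^{u+e_2^{(r)}}(x)\chi^u(y),
\]
\[
\chi^u(y*x) = \chi^u(y)\chi^{u+e_1^{(r)}}(x) + \chi^{u+e_2^{(r)}}(y)\chi^u(x).
\]
Subtracting, centrality forces $\chi^u(x)\bigl(\chi^{u+e_1^{(r)}}(y) - \chi^{u+e_2^{(r)}}(y)\bigr) = \chi^u(y)\bigl(\chi^{u+e_1^{(r)}}(x) - \chi^{u+e_2^{(r)}}(x)\bigr)$ for all $y \in \GG$, and by varying $y$ (the functions $\chi^u$ and $\chi^{u+e_1^{(r)}}-\chi^{u+e_2^{(r)}}$ are not proportional on $\GG$ since $e_2^{(r)}-e_1^{(r)}$ is a nonzero character in the active case, using linear independence of the $e_2^{(r)}-e_1^{(r)}$) one concludes $\chi^{u+e_1^{(r)}}(x) = \chi^{u+e_2^{(r)}}(x)$, i.e.\ the defining equations of the asserted set. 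One also obtains along the way that $\chi^u(x)=0$ whenever $\langle p_r,u\rangle \ne 0$ for some $r$, which is exactly the statement $x \in \ol{O_\tau}$ by the description \eqref{tor:fun_orb2} of orbit closures.

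Next I would prove the reverse inclusion: a point $x \in \ol{O_\tau}$ satisfying $\chi^{u+e_1^{(r)}}(x) = \chi^{u+e_2^{(r)}}(x)$ for all $(r,u) \in I$ is central. Here the key observation is that because $x \in \ol{O_\tau}$, for any $u$ with $\langle \ol p, u\rangle \ne \ol 0$ all the summands $\chi^{u + \ol i\cdot\ol{e_2}}(x)$ and $\chi^{u+\ol j\cdot\ol{e_1}}(x)$ in \eqref{1fin_mult} vanish unless the exponent lands in $\tau^\perp$, which (as in the proof of Theorem~\ref{thm:mon}, case analysis on $\langle p_s, \cdot\rangle$) forces $\ol i$ or $\ol j$ to be extremal. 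So in $\chi^u(x*y)$ only the terms with $\ol j = \ol 0$ survive on the $x$-slot being in $\ol{O_\tau}$; a parallel statement holds for $\chi^u(y*x)$. Matching these surviving terms reduces the identity $\chi^u(x*y) = \chi^u(y*x)$ to a product of the rank-one identities treated above, each of which holds by hypothesis once one checks that the relevant exponents, after subtracting the extremal multiple of $\ol{e_2}$ (resp.\ $\ol{e_1}$), are of the form $u' + e_\ell^{(r)}$ with $(r, u') \in I$. This bookkeeping — tracking which index set $I$ the intermediate exponents belong to, and confirming $u'\in S_\sigma$ and $\langle p_j,u'\rangle = \delta_{jr}$ — is the part requiring care, but it is the same kind of pairing computation already used repeatedly in Sections~\ref{sec:afftor} and~\ref{sec:idemp}.

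The main obstacle I anticipate is the forward direction's "varying $y$" step: one must argue that the two functions $\chi^u|_\GG$ and $(\chi^{u+e_1^{(r)}} - \chi^{u+e_2^{(r)}})|_\GG$ are linearly independent over $\KK$, so that the relation $\chi^u(x)\,f(y) = g(x)\,\chi^u(y)$ holding identically in $y$ forces $g(x)/\chi^u(x)$ to be the constant $0$ — i.e.\ that one cannot have $\chi^{u+e_1^{(r)}} - \chi^{u+e_2^{(r)}}$ proportional to $\chi^u$ on $\GG$. Since $\chi^{u+e_1^{(r)}} = \chi^u \chi^{e_1^{(r)}}$ on the torus $X_0 \subset \GG$ and $\chi^{e_1^{(r)}} \ne \chi^{e_2^{(r)}}$ (their difference $e_2^{(r)} - e_1^{(r)}$ is a nonzero lattice vector in the active case), the difference $\chi^u(\chi^{e_1^{(r)}} - \chi^{e_2^{(r)}})$ is a nontrivial combination of two distinct characters and hence not a scalar multiple of the single character $\chi^u$; this is where activity, via Theorem~\ref{thm:act}, is genuinely used. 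Once this is in place, the theorem follows by combining the two inclusions, and I would close by remarking that in the corank-$1$ case $k=1$ the set $I$ singles out a single equation, recovering the description from~\cite{YZ}.
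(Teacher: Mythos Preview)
Your approach—comparing $\chi^u(x*y)$ with $\chi^u(y*x)$ directly—is a legitimate and somewhat more elementary alternative to the paper's route, which instead expands $\chi^u(y*x*y^{-1})$ after first establishing an explicit formula for $\chi^u(y^{-1})$. Both arguments ultimately rest on the same mechanism: expand via~\eqref{1fin_mult}, view the result as a linear combination of characters in~$y$, and use that distinct lattice exponents give linearly independent functions on $\GG$, with activity ensuring the relevant exponents really are distinct. The paper then needs an inductive lemma to reduce its product-type conditions to the $I$-indexed equations; your reverse-direction ``one step at a time'' replacement of $e_1^{(r)}$ by $e_2^{(r)}$ is the analogue of that lemma, and it does go through (the intermediate $w=v-e_1^{(r)}$ stays in $S_\sigma$ because every ray $q$ outside~$\tau$ pairs nonnegatively with each Demazure root).

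There is, however, a genuine gap in your forward direction. Your rank-one analysis for $(r,u)\in I$ correctly yields both $\chi^u(x)=0$ and $\chi^{u+e_1^{(r)}}(x)=\chi^{u+e_2^{(r)}}(x)$ for \emph{those particular}~$u$. You then assert that ``along the way'' one obtains $\chi^u(x)=0$ for \emph{every} $u\in S_\sigma$ with $\langle\ol p,u\rangle\ne\ol 0$, but this does not follow from the rank-one case: one cannot in general factor such a $u$ as $u'+w$ with $(r,u')\in I$ and $w\in S_\sigma$. (Concretely, take $\sigma\subset\ZZ^2$ with rays $(1,0)$ and $(1,2)$, $\tau=\cone(1,0)$; then $v=(2,-1)\in S_\sigma$ has $\langle p_1,v\rangle=2$ but admits no such decomposition.) The fix is immediate and entirely in the spirit of your method: run the same computation for arbitrary $u$. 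In $\chi^u(x*y)-\chi^u(y*x)$ the $y$-characters that appear are $\chi^{u+\ol j\cdot\ol{e_1}}$ and $\chi^{u+\ol i\cdot\ol{e_2}}$, and these are pairwise distinct except at $\ol i=\ol j=\ol 0$—this is precisely where \emph{full} linear independence of the $e_2^{(r)}-e_1^{(r)}$ is used, not merely $e_1^{(r)}\ne e_2^{(r)}$ for a single~$r$. Setting the coefficient of $\chi^{u+\langle\ol p,u\rangle\cdot\ol{e_1}}(y)$ to zero then gives $\chi^u(x)=0$ directly, and the overlap term gives $\chi^{u+\langle\ol p,u\rangle\cdot\ol{e_2}}(x)=\chi^{u+\langle\ol p,u\rangle\cdot\ol{e_1}}(x)$, which is exactly the identity your reverse direction reduces to the $I$-indexed equations.
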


\begin{proof}
Let us compute $\chi^u(y*x*y^{-1})$, where $x\in X$, $y \in \GG$. Take $u \in S_\sigma$, then
\begin{align*}
	&\chi^u(y*x*y^{-1}) = \sum \limits_{\ol{i} + \ol{j} = \langle \ol{p}, u \rangle } { \langle \ol{p}, u \rangle \choose \ol{i} } \chi^{u + \ol{i}\cdot \ol{e_2}}(y*x) \cdot  \chi^{u + \ol{j}\cdot \ol{e_1}} (y^{-1}) = \\
	&=\sum \limits_{\ol{i} + \ol{j} = \langle \ol{p}, u \rangle } { \langle \ol{p}, u \rangle \choose \ol{i} } (-1)^{\ol{i}} \chi^{u + \ol{i}\cdot \ol{e_2}}(y*x) \cdot  \chi^{-u - \ol{i}\cdot \ol{e_2} - \langle \ol{p}, u \rangle \ol{e_1}} (y) =\\
	&= \sum \limits_{\ol{i} + \ol{j} = \langle \ol{p}, u \rangle } { \langle \ol{p}, u \rangle \choose \ol{i} } (-1)^{\ol{i}} \left(
	\sum \limits_{\ol{t} + \ol{l} = \ol{j}} { \ol{j} \choose \ol{t} } \chi^{u + \ol{i}\cdot \ol{e_2} +  \ol{t}\cdot \ol{e_2}}(y) \cdot  \chi^{u + \ol{i}\cdot \ol{e_2} + \ol{l}\cdot \ol{e_1}} (x)
	 \right)  
	\cdot \chi^{-u - \ol{i}\cdot \ol{e_2} - \langle \ol{p}, u \rangle \ol{e_1}} (y)=\\
	&= \sum \limits_{\ol{i} + \ol{t} + \ol{l} = \langle \ol{p}, u \rangle } { \langle \ol{p}, u \rangle \choose \ol{i} } (-1)^{\ol{i}}  {\langle \ol{p}, u \rangle - \ol{i}  \choose \ol{t} } \chi^{u + \ol{i}\cdot \ol{e_2} +  \ol{l}\cdot \ol{e_1}}(x) \cdot  \chi^{\ol{t}\cdot \ol{e_2} - \langle \ol{p}, u \rangle \ol{e_1} } (y)=\\
	&= \sum \limits_{\ol{t} = \ol{0}}^{\langle \ol{p}, u \rangle}  { \langle \ol{p}, u \rangle \choose \ol{t} }  \left( \sum \limits_{\ol{i} + \ol{l} = \langle \ol{p}, u \rangle - \ol{t}} (-1)^{\ol{i}} { \langle \ol{p}, u \rangle - \ol{t} \choose \ol{i} }  \cdot  \chi^{u + \ol{i}\cdot \ol{e_2} + \ol{l}\cdot \ol{e_1}} (x) \right) 
	\cdot \chi^{\ol{t}\cdot \ol{e_2} - \langle \ol{p}, u \rangle \ol{e_1}} (y)=\\
	&= \sum \limits_{\ol{t} = \ol{0}}^{\langle \ol{p}, u \rangle}  { \langle \ol{p}, u \rangle \choose \ol{t} }  
	\chi^u \cdot \prod\limits_{r = 1}^{k}\left( \chi^{e_1^{(r)}} - \chi^{e_2^{(r)}}\right)^{\langle p_r , u\rangle - t_r} (x)
	\cdot  \chi^{\ol{t}\cdot \ol{e_2} - \langle \ol{p}, u \rangle \ol{e_1}} (y).
\end{align*}

Consider the term of the above sum corresponding to $\ol{t} = \langle \ol{p}, u \rangle$. In this case ${ \langle \ol{p}, u \rangle \choose \ol{t} } = 1$ and $\langle p_r , u\rangle - t_r = 0$. Therefore, this term equals $\chi^u(x) \cdot \chi^{\langle \ol{p}, u \rangle (\ol{e_2} - \ol{e_1})}(y)$. Recall that in order to check whether ${x \in Z(X)}$, it is necessary to verify $\chi^u(yxy^{-1}) - \chi^u(x) = 0$ for all $u \in \sigma^\vee$, $y \in \GG = G(X)$. Rewriting the equality $\chi^u(yxy^{-1}) - \chi^u(x) = 0$ using formula~\eqref{mm}, we obtain:
\begin{equation}
\label{form:z}
\begin{aligned}
\chi^u(x) \Bigl(&\chi^{\langle \ol{p}, u \rangle (\ol{e_2} - \ol{e_1})}(y) - 1 \Bigr) + \\ 
&+ \sum \limits_{\ol{0} \le \ol{t} < \langle \ol{p}, u \rangle} { \langle \ol{p}, u \rangle \choose \ol{t} }  
\chi^u \cdot \prod\limits_{r = 1}^{k}\left(\chi^{e_1^{(r)}} - \chi^{e_2^{(r)}}\right)^{\langle p_r , u\rangle - t_r} (x)
\cdot  \chi^{\ol{t}\cdot \ol{e_2} - \langle \ol{p}, u \rangle \ol{e_1}} (y) = 0.
\end{aligned}
\end{equation}

Fix $x$ and check whether expression~\eqref{form:z} vanishes for arbitrary $y \in \GG$. Recall that $\GG = X_\tau$ and $\KK[X_\tau] = \bigoplus_{u \in S_\tau} \KK \chi^u$. Let $\langle \ol{p}, u \rangle \neq \ol{0}$. Then, since $\GG$ is active, the coordinates ${\ol{e_2} - \ol{e_1}}$ are linearly independent vectors, i.e., their nontrivial linear combination $\langle \ol{p}, u \rangle (\ol{e_2} - \ol{e_1})$ is nonzero. Then the functions $\chi^{\langle \ol{p}, u \rangle (\ol{e_2} - \ol{e_1})}(y) - \chi^0(y)$ and $\chi^{\ol{t}\cdot \ol{e_2} - \langle \ol{p}, u \rangle \ol{e_1}} (y)$ for $\ol{t} \neq \langle \ol{p}, u \rangle$ are linearly independent in $\KK[X_\tau]$, since they lie in different homogeneous components. Now suppose $\langle \ol{p}, u \rangle = \ol{0}$. Then $\chi^{\langle \ol{p}, u \rangle (\ol{e_2} - \ol{e_1})}(y) - \chi^0(y) = 0$, while the remaining $\chi^{\ol{t}\cdot \ol{e_2} - \langle \ol{p}, u \rangle \ol{e_1}} (y)$ for $\ol{t} \neq \langle \ol{p}, u \rangle$ are linearly independent in $\KK[X_\tau]$. Thus, in the linear combination~\eqref{form:z}, all coefficients depending on $x$ have to be equal~$0$. Hence, $x \in Z(X)$ if and only if
\begin{equation}
   	\begin{cases}
   		\chi^u(x) = 0, \text{ если } \langle \ol{p}, u \rangle \neq \ol{0}\\
   		 \chi^u \cdot \prod\limits_{r = 1}^{k}\left( \chi^{e_1^{(r)}} - \chi^{e_2^{(r)}}\right)^{\langle p_r , u\rangle - t_r} (x) = 0, \text{ если } \ol{0} \le \ol{t} < \langle \ol{p}, u \rangle
   	\end{cases}
\end{equation}

The condition $\chi^u(x) = 0$ for $\langle \ol{p}, u \rangle \neq \ol{0}$ is equivalent to $x \in \overline{O_\tau}$. Let us look at the second condition. Note that if $\langle p_r , u\rangle = 0$ for some $r = 1, \ldots, k$, then the corresponding factor $\left( \chi^{e_1^{(r)}} - \chi^{e_2^{(r)}}\right)^{\langle p_r, u\rangle - t_r}$ can be ignored, since $\left( \chi^{e_1^{(r)}} - \chi^{e_2^{(r)}}\right)^{\langle p_r, u\rangle - t_r} = \left( \chi^{e_1^{(r)}} - \chi^{e_2^{(r)}}\right)^0 = 1$. Further we consider only those $r$ for which $\langle p_r , u\rangle > 0$. Observe that the second condition must hold, in particular, for $\ol{t} = \langle \ol{p} , u\rangle - \ol{s}$, where $\ol{s} = (0, \ldots, 0, \underset{r}{1}, 0, \ldots, 0)$. In other words, $\chi^{u + e_1^{(r)}}(x) =  \chi^{u + e_2^{(r)}}(x)$.  To complete the proof of Theorem~\ref{thm:z}, it remains to prove the following lemma.

\begin{lemma}
Suppose $\chi^{u + e_1^{(r)}}(x) =  \chi^{u + e_2^{(r)}}(x)$ for every $r=1, \ldots, k$ and every $u \in S_\sigma$ such that $\langle p_j, u \rangle = \delta_{jr}$ for $j = 1, \ldots, k$. Then \[\chi^u \cdot \prod\limits_{r = 1}^{k}\left( \chi^{e_1^{(r)}} - \chi^{e_2^{(r)}}\right)^{\langle p_r , u\rangle - t_r} (x) = 0\] for all $u \in S_\sigma$ such that $\ol{0} \le \ol{t} < \langle \ol{p}, u \rangle$.
\end{lemma}

\begin{proof}
We argue by induction on the sum of the coordinates of the vector $\langle \ol{p}, u \rangle$.  

If the sum of the coordinates of $\langle \ol{p}, u \rangle$ equals $1$, then $\ol{t}=\ol{0}$ and the claim follows from the assumption of the lemma.  

Set $\ol{s} = \langle \ol{p} , u\rangle - \ol{t}$ and $P:= \prod\limits_{r = 2}^{k}\left( \chi^{e_1^{(r)}} - \chi^{e_2^{(r)}}\right)^{s_r} $. 

Suppose $s_1 \neq 0$. Denote $u' = u + e_1^{(1)}$ and $u'' = u + e_2^{(1)}$. Then
\begin{align*} 
	&\chi^u \cdot \prod\limits_{r = 1}^{k}\left( \chi^{e_1^{(r)}} - \chi^{e_2^{(r)}}\right)^{\langle p_r , u\rangle - t_r}  \!\!= 
	\chi^u \cdot \left( \chi^{e_1^{(1)}} \!\!- \chi^{e_2^{(1)}}\right)^{s_1} \!\!\cdot P = \\
	&= P \cdot \sum\limits_{l = 0}^{s_1} {s_1 \choose l} (-1)^l \chi^{u+ le_2^{(1)} + (s_1 - l)e_1^{(1)}}=\\
	&= P \cdot \sum\limits_{l = 0}^{s_1-1} {s_1-1 \choose l} (-1)^l \chi^{u+ le_2^{(1)} + (s_1 - l)e_1^{(1)}} + {s_1-1 \choose l-1} (-1)^l \chi^{u+ le_2^{(1)} + (s_1 - l)e_1^{(1)}}=\\
	&= P \cdot \sum\limits_{l = 0}^{s_1-1} {s_1-1 \choose l} (-1)^l \chi^{u' + le_2^{(1)} + (s_1 -1 - l)e_1^{(1)}} - \\
	&\hspace{4cm}- P \cdot \sum\limits_{l = 0}^{s_1-1} {s_1-1 \choose l-1} (-1)^{l-1} \chi^{u''+  (l - 1)e_2^{(1)} + (s_1 - l)e_1^{(1)}}=\\
	&= \chi^{u'} \cdot \prod\limits_{r = 1}^{k}\left(\chi^{e_1^{(r)}} - \chi^{e_2^{(r)}}\right)^{\langle p_r , u'\rangle - t_r}
	- \chi^{u''} \cdot \prod\limits_{r = 1}^{k}\left( \chi^{e_1^{(r)}} - \chi^{e_2^{(r)}}\right)^{\langle p_r , u''\rangle - t_r}.
\end{align*}
In intermediate computations we assume that the binomial coefficient ${i \choose j}$ equals $0$ if $j < 0$. Further note that the first term vanishes when evaluated at $x$ according to the induction hypothesis for $u' = u + e_1^{(1)}$, and the second term vanishes by the induction hypothesis for $u'' = u + e_2^{(1)}$. Since we assumed $s_1 > 0$, we have $\langle p_1, u \rangle > 0$, hence $u', u'' \in S_\sigma$.
\end{proof}
Thus, Theorem~\ref{thm:z} is proved.
\end{proof}

\begin{remark}
If the group of invertible elements $G_\chi$ is not an active semidirect product, then the set
\[\ol{O_\tau} = \{x \in X \mid \chi^u(x) = 0 \text{ for those } u \in S_\sigma, \text{ such that } \langle \ol{p}, u \rangle \ne 0\}\] 
in the formulation of Theorem~\ref{thm:z} must be enlarged and replaced by
\[\{x \in X \mid \chi^u(x) = 0 \text{ for those } u \in S_\sigma, \text{ such that } \langle \ol{p}, u \rangle (\ol{e_2} - \ol{e_1}) \ne 0\}.\]
In particular, if $G$ is commutative, then $\ol{e_1} = \ol{e_2}$, whence for any $u$ we have $\langle\ol{p}, u \rangle (\ol{e_2} - \ol{e_1}) = 0$; so the set of equations on $x \in X$ is empty and $Z(X) = X$.
\end{remark}

\begin{example}
Let us compute the center of the monoid on the cylinder ${X = \{x_1x_2=x_4x_5\}\subseteq \AA^5}$ in Example~\ref{example2}. According to Theorem~\ref{thm:z}, the center $Z(X)$ is given by equations of the form $\chi^{u + e_1^{(r)}} =  \chi^{u + e_2^{(r)}}$ in the closure of the orbit $\overline{O_\tau}$, where $r = 1, 2$, $u \in S_\sigma$ and $\langle p_j, u \rangle = \delta_{jr}$ for $j = 1, 2$.  

The closure of the orbit $\overline{O_\tau}$ consists of points $x \in X_\sigma$ satisfying equations $\chi^u = 0$ for all $u \in S_\sigma \setminus \tau^\perp$. Note that \[S_\sigma \setminus \tau^\perp = (q_1 + S_\sigma) \cup (q_2 + S_\sigma) \cup (q_5 + S_\sigma).\]
Hence $\overline{O_\tau}$ is defined by equations $\chi^{q_1} = \chi^{q_2} = \chi^{q_5} = 0$, i.e., $x_1=x_2=x_5=0$.  

For $r=1$, we need to consider $u \in S_\sigma$ such that $\langle p_1, u\rangle = 1$ and $\langle p_2, u\rangle = 0$, that is, $u \in S_1 = \{(1, 0, a, b) \mid a,b \ge 0\}$. Note that $S_1 \subseteq q_1 + S_\sigma$, so for $r=1$ it suffices to write the equation $\chi^{q_1 + e_1^{(1)}} =  \chi^{q_1 + e_2^{(1)}}$. We obtain $\chi^{(0,0,a_1,b_1)} = \chi^{(0,0,a_2,b_2)}$, i.e., $x_3^{a_1}x_4^{b_1} = x_3^{a_2}x_4^{b_2}$. Similarly, for $r=2$ we obtain the equation $x_3^{c_1}x_4^{d_1} = x_3^{c_2}x_4^{d_2}$. 

Thus, $Z(X) \subseteq X$ is given by the system
\[x_1=x_2=x_5=0, \quad x_3^{a_1}x_4^{b_1} = x_3^{a_2}x_4^{b_2}, \quad x_3^{c_1}x_4^{d_1} = x_3^{c_2}x_4^{d_2}.\]

Note that this system implies $x_3^{a_1+c_1}x_4^{b_1+d_1-1} = x_3^{a_2+c_2}x_4^{b_2+d_2-1}$, since ${b_1+d_1-1\ge 1}$ and $b_2+d_2-1 \ge 1$. Therefore, by direct substitution of $x \in Z(X)$ that we found into $x*y$ and $y*x$ according to formula~\eqref{eq_mult_example2}, we obtain the same coordinates.
\end{example}

\end{document}